\documentclass{article}

\usepackage{xcolor}
\usepackage{xifthen}
\usepackage{tikz}
\usepackage{tikz-3dplot}
\usepackage{tikz-3dplot-circleofsphere}
\usepackage{pgfplots}
\usepackage{amsmath, amssymb}    

\usepackage{amsthm}
\usepackage{mathrsfs}        
\usepackage{amsfonts}
\usepackage{bbm}
\usepackage{wasysym}
\usepackage{mathtools}

 \usepackage{hyperref}
\hypersetup{colorlinks = true, linkcolor = blue, citecolor= magenta}

\usepackage[
  backend=biber,
  style=numeric,
  sortcites=true,
  defernumbers=true,
  giveninits=true,   
  eprint=false,
  url=false,         
  doi=false,         
  isbn=false         
]{biblatex}
\usepackage{graphicx} 

\usepackage{float} 
\usepackage[toc,page]{appendix}
\newtheorem*{theorem*}{Theorem}
\newtheorem{theorem}{Theorem}[section]

\newtheorem{lemma}[theorem]{Lemma}
\newtheorem{definition}[theorem]{Definition}
\newtheorem{notation}[theorem]{Notation}
\newtheorem{remark}[theorem]{Remark}
\newtheorem*{remark*}{Remark}
\newtheorem{proposition}[theorem]{Proposition}
\usepackage{enumitem}
\usepackage[section]{placeins}
\usepackage{float}
\usepackage[toc,page]{appendix}

\def\cZ{{\mathcal Z}}
\def\N{{\mathbb N}}    
\def\Z{{\mathbb Z}}    
\def\R{{\mathbb R}}   
   
\def\C{{\mathbb C}}
\def\S{{\mathbb S}}

  \def\cG{{\mathcal G}}     \def\cH{{\mathcal H}} \def\cN{{\mathcal N}}      \def\cU{{\mathcal U}}    \def\cP{{\mathcal P}}  \def\cE{{\mathcal E}}       \def\cL{{\mathcal L}}     \def\cZ{{\mathcal Z}}

\pgfplotsset{compat=1.18}

\title{Minimal time for null controllability of the parabolic spherical Baouendi-Grushin equation}
\author{Cyprien Tamekue\footnote{Department of Electrical and Systems Engineering, Washington University in St. Louis, St. Louis, MO, USA.
Email: cyprien@wustl.edu.}}

\date{}

\begin{document}

\maketitle

\begin{abstract}
   We study null controllability for the parabolic equation on \(\mathbb{S}^{2}\) endowed with its canonical almost-Riemannian structure. 
For a spherical crown $\omega=\{\alpha<x_3<\beta\}$, where $0\le \alpha<\beta\le1$, we prove the sharp minimal time formula
\(T_{\min}(\omega)=\ln(1/\sqrt{1-\alpha^{2}})\) for null controllability in $\omega$.
We also prove that, whenever the control region contains the equator, null controllability holds in every positive time. The proof combines two complementary tools. First, after Fourier decomposition with respect to the periodic variable, we establish observability estimates for a family of one-dimensional singular parabolic equations, with constants uniform with respect to the Fourier mode; the singularities at the poles are handled via a Hardy--Poincaré inequality. Second, for crowns away from the equator, we use the moment method to construct controls on the pole-touching crown $\alpha<x_3< 1$ from sharp weighted lower bounds on associated Legendre functions, and then pass to a general crown $\alpha<x_3<\beta$ by a cut-off argument on the full domain combined with the arbitrary-time controllability of crowns containing the equator. The result closes the large-time gap left in earlier work and gives the exact null-controllability threshold for the canonical almost-Riemannian heat equation on $\mathbb S^2$.
\end{abstract}

\tableofcontents

\section{Introduction}\label{s::introduction}

The controllability of parabolic equations with degeneracies has attracted sustained attention over the last two decades. A prototypical class is given by Baouendi--Grushin type operators, whose mixed elliptic--degenerate structure produces controllability phenomena that are absent in the uniformly parabolic setting; see, for instance, \cite{beauchard2014,beauchard2015,beauchard2020,koenig2017,duprez2020,darde2023null}. In particular, the location of the control region relative to the degeneracy set may impose a strictly positive lower bound on the time needed for null controllability.

In this paper, we determine the exact minimal time for null controllability of the parabolic equation associated with the spherical Baouendi--Grushin operator on the two-dimensional sphere $\S^2$, when the control acts on a one-sided spherical crown. More precisely, if the control region is
\[
\widetilde\omega=\{(x_1,x_2,x_3)\in \S^2:\alpha<x_3<\beta\},
\qquad 0\le \alpha<\beta\le 1,
\]
with $\alpha=\sin a$, $0\le a<\pi/2$, then the minimal time is exactly
\[
T_{\min}(\widetilde\omega)=\ln(1/\sqrt{1-\alpha^{2}})=\ln(1/\cos a).
\]
We also prove that null controllability holds in every positive time when the control region contains the equator. Thus, the paper gives a complete description of the threshold phenomenon for spherical crowns and shows that, in the one-sided case, the critical time depends only on the distance from the equator to the lower boundary of the crown. The operator arises from the canonical almost-Riemannian structure on $\S^2$, is hypoelliptic, and degenerates precisely on the equator. In earlier work~\cite{tamekue2022null}, the author proved the lower bound
$T_{\min}(\widetilde\omega)\ge \ln(1/\sqrt{1-\alpha^{2}})$ for crowns away from the equator, and established null controllability in sufficiently large time for symmetric double crowns. The main issue left open there was whether this lower bound is also sufficient for a one-sided crown. The present paper answers this question positively and therefore closes the gap left in~\cite{tamekue2022null}. The geometric setting and preliminary functional framework are developed in detail in~\cite[Chapter~2]{tamekue2023controllability}; we therefore recall here only the ingredients needed for the present analysis. We also refer to~\cite{agrachev2019,agrachev2008} for general background on almost-Riemannian structures. 

Let $\S^2:=\{p=(x_1,x_2,x_3)\in\R^3:\ x_1^2+x_2^2+x_3^2=1\}$.
We consider the almost-Riemannian structure generated by the two Killing vector fields $X_1$ and $X_2$, corresponding to rotations around the $x_1$- and $x_2$-axes, respectively. The associated intrinsic operator, defined with respect to the standard Riemannian volume $\mu$ on $\S^2$, is given by
\[
\mathcal L=\operatorname{div}_{\mu}\!\circ\nabla_{\mathrm{sR}}
=-X_1^{+}X_1-X_2^{+}X_2.
\]
This operator is hypoelliptic~\cite{hormander1967} and degenerates exactly on the equator $\cE=\{x_3=0\}$. In latitude--longitude coordinates
\[
\Phi:(x,y)\in\Big(-\frac{\pi}{2},\frac{\pi}{2}\Big)\times[0,2\pi)
\longmapsto (\cos x\cos y,\cos x\sin y,\sin x),
\]
the pullback $\Delta_{\mathrm{BG}}:=\Phi^*\mathcal L$ takes the form
\begin{equation}\label{eq:intro-BG}
\Delta_{\mathrm{BG}}
=\frac{1}{\cos x}\partial_x\!\big(\cos x\,\partial_x\big)+\tan^2x\,\partial_y^2 .
\end{equation}
Hence $\Delta_{\mathrm{BG}}$ is the spherical counterpart of the $2$D Baouendi--Grushin operator~\cite{beauchard2014}: it is elliptic away from $x=0$, degenerates along $\{x=0\}$, and exhibits coordinate singularities at the boundaries $x=\pm\pi/2$. Moreover, the coordinate representation on $\Omega$ of the Riemannian area $d\mu$ on $\S^2$ is given by
\begin{equation}\label{eq:measure on Omega}
    d\sigma(x,y)=\cos{x}dxdy,\qquad (x,y)\in\Omega.
\end{equation}

For uniformly parabolic heat equations on bounded domains, null controllability from any nonempty open set holds in arbitrarily small time; see, among many references, \cite{fattorini1971,fursikov1996,imanuilov1995,lebeau1995}. Degenerate parabolic equations behave differently. For the two-dimensional parabolic Baouendi--Grushin equation, Beauchard et al.~\cite{beauchard2014} showed that a strictly positive minimal time may be necessary, and that this threshold depends on the geometry of the control set. Minimal-time questions have also been studied for more general operators of the form $\cG_q=\partial_x^2+q(x)^2\partial_y^2$
on domains such as $(-L,L)\times(0,1)$, where $q$ vanishes on the degeneracy set; see, for instance, \cite{beauchard2015,beauchard2020}. However, these results do not directly apply to the present spherical model, because in the coordinates above the effective coefficient $q(x)=\tan x$ blows up at the poles $x=\pm\pi/2$. Handling this singular behavior is one of the main technical difficulties of the paper.

The proof of the sharp upper bound combines two complementary mechanisms.

\smallskip
\emph{(i) Uniform observability when the control set contains $\{x=0\}$ in its interior.}
Expanding in the periodic variable $y$ diagonalizes \eqref{eq:intro-BG} into the one-dimensional family
\[
\cL_n=\frac{1}{\cos x}\partial_x(\cos x\,\partial_x)-n^2\tan^2x,
\qquad n\in\Z.
\]
When the control region contains $\{x=0\}$, we prove Carleman estimates that are uniform with respect to the Fourier index $n$. A Hardy--Poincaré inequality is used to absorb the singular terms generated by the boundaries, which yields null controllability in every positive time for crowns containing the equator.

\smallskip
\emph{(ii) Moment method and cut-off argument when the control set is away or contains $\{x=0\}$ on its boundary.}
For the boundary-touching strip $(a,\pi/2)\times[0,2\pi)$, we construct controls by a moment method. The key input is a family of explicit lower bounds on the mass of associated Legendre functions,
\[
\int_a^{\frac{\pi}{2}} |v_{\ell,n}(x)|^2\cos x\,dx
\ge C_{\ell,n}\cos^{2n+2}a,
\qquad \ell\ge n\ge1,
\]
with constants $C_{\ell,n}>0$ tracked explicitly. These estimates compensate for the exponential concentration of high tangential frequencies near $\{x=0\}$ and imply null controllability for every $T>\ln(1/\cos a)$.
We then pass from the boundary-touching strip $(a,\pi/2)\times[0,2\pi)$ to a general strip $(a,b)\times[0,2\pi)$ by a cut-off argument on the full domain, combined with the arbitrary-time controllability of stripes containing $\{x=0\}$ in their interior.

Together with the lower bound from~\cite{tamekue2022null}, these arguments yield the exact minimal time for null controllability from a spherical crown.

\section{Main results}\label{ss::main results}
We consider the intrinsic parabolic spherical Baouendi-Grushin equation
\begin{equation}\label{eq::intrinsic parabolic BG equation}
	\begin{cases}
		\partial_t f - \mathcal L f = u \mathbf 1_{\widetilde{\omega}},&\qquad \text{in }(0,T)\times \mathbb S^2,\\
		f|_{t=0} = f_0,& \qquad\text{in }\mathbb S^2.
	\end{cases}
\end{equation}
Here $f=f(t,p)$ is the state, $f_0\in L^2(\mathbb S^2;\mu)$ is the initial datum, $u$ is the control, and $\mathbf 1_{\widetilde\omega}$ denotes the characteristic function of the control set $\widetilde\omega\subset\mathbb S^2$.

We say that \eqref{eq::intrinsic parabolic BG equation} is \textit{null controllable} from $\widetilde\omega$ in time $T>0$ if, for every $f_0\in L^2(\mathbb S^2;\mu)$, there exists $u\in L^2(0,T;L^2(\mathbb S^2;\mu))$ such that the corresponding solution satisfies $f(T,\cdot)=0$.

The following result, proved in~\cite[Theorem~1.2]{tamekue2022null}, is the starting point of the present work.
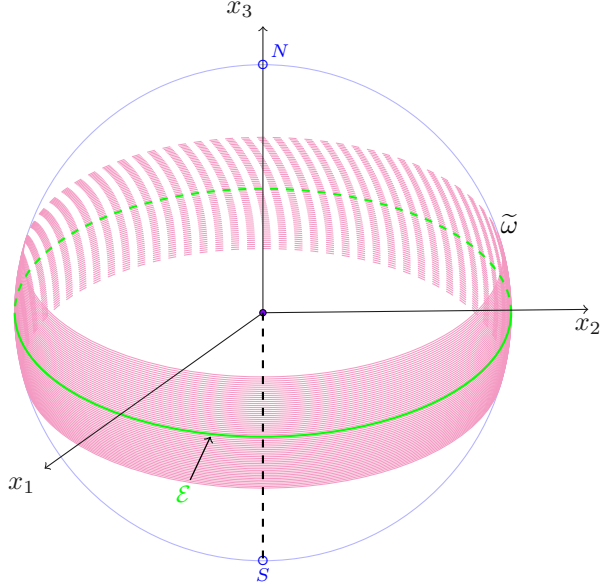
\begin{figure} 
	\centering
	\def\r{2.2}
	\tdplotsetmaincoords{60}{125}
	\pgfmathsetmacro{\rvec}{.8}
	\pgfmathsetmacro{\thetavec}{30}
	\pgfmathsetmacro{\phivec}{60}
	\definecolor{qqffff}{rgb}{0,1,1}
	\definecolor{qqqqff}{rgb}{0,0,1}
	\definecolor{ffqqqq}{rgb}{1,0,0}
	\definecolor{xfqqff}{rgb}{0.4980392156862745,0,1}
	\definecolor{ffqqff}{rgb}{1,0,1}
		\vspace{-0.4cm}
	\begin{tikzpicture}[tdplot_main_coords]
		\draw[tdplot_screen_coords,thin,blue!30] (0,0,0) circle (1.5*\r);
		\foreach \a in {-15,-14.5,...,15}
		{\tdplotCsDrawLatCircle[thin,magenta!50]{1.5*\r}{\a}}
		\tdplotCsDrawLatCircle[green,thick]{1.5*\r}{0}
		\begin{scope}[thin,black!90]
			\draw[->] (0,0,0) -- (2.3*\r,0,0) node[anchor=north east] {$x_{1}$};
			\draw[->] (0,0,0) -- (0,2.4*\r,1.8) node[anchor=north] {$x_{2}$};
			\draw[->] (0,0,0) -- (0,0,2*\r) node[anchor=south east] {$x_{3}$};
		\end{scope}
		\draw (0,4,2.4) node[above]{$\widetilde{\omega}$};
		\draw [->,line width=0.5pt] (4.2,0.8*\r,0) -- (3.1,0.6*\r,0);
		\draw (4.2,0.75*\r,0) node [below,scale=0.9,green] {$\cE$} ;
		\begin{scriptsize}
			\draw [fill=xfqqff] (0,0,0)  circle (1.2pt);
			\draw [fill=red] (0,0,1.73*\r)  node[blue,scale=1.2] {$\circ$};
			\draw [fill=red] (0,0,-1.73*\r)  node[blue,scale=1.2] {$\circ$};
			\draw (0,0,1.73*\r)node[blue, above right]{$N$};
			\draw[dashed] (0,0,-1.73*\r) node[blue, below]{$S$};
		\end{scriptsize}
		\draw [line width=0.8pt,dashed] (0,0,0)-- (0,0,-1.73*\r) ;
	\end{tikzpicture} 
	\caption{The control region $\widetilde{\omega}$ (in \textit{magenta}) is symmetric with respect to the equator $\cE$ (in \textit{green}), which is contained in its interior. The north and south poles are the points $N$ and $S$ respectively (in \textit{blue}).} \label{fig::Grushin sphere-control domain containing the equator} 
\end{figure}
\begin{theorem}[\cite{tamekue2022null}]
	\label{thm::main-intrinsic}
	Let $\widetilde{\omega} = \{ (x_1,x_2,x_3)\in \mathbb S^2\mid \alpha <|x_3|< \beta \}$ with $0<\alpha<\beta\le 1$. Then a positive minimal time is required for null controllability of \eqref{eq::intrinsic parabolic BG equation} from $\widetilde{\omega}$, namely
		\begin{equation*}
		T_{\min}(\widetilde{\omega}):=\inf\{T>0: \mbox{system \eqref{eq::intrinsic parabolic BG equation} is null controllable from $\widetilde{\omega}$ in time \;}T\}
	\end{equation*}	
	 satisfies $T_{\min}(\widetilde{\omega})\ge\ln (1 / \sqrt{1-\alpha^2})$. Moreover, there exists $T^{*}>0$ such that, for every $T\ge T^{*}$,  system \eqref{eq::intrinsic parabolic BG equation} is null controllable from $\widetilde{\omega}$ in time~$T$.
\end{theorem}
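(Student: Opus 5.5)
We prove the two assertions of Theorem~\ref{thm::main-intrinsic} separately. Both rest on the Fourier decomposition in the longitude $y$. Writing $f(t,x,y)=\sum_{n\in\Z} f_n(t,x)e^{iny}$ reduces the adjoint problem to the family $\partial_t g-\cL_n g=0$ on $(-\pi/2,\pi/2)$ in $L^2(\cos x\,dx)$. Null controllability from $\widetilde\omega$ in time $T$ is equivalent to an observability inequality
\[
\|g_n(T)\|^2\le C_T\int_0^T\!\!\int_{\omega_x}|g_n|^2\cos x\,dx\,dt
\qquad\text{for all } n\in\Z,
\]
with $C_T$ independent of $n$. Here $\omega_x=\{x:\alpha<|\sin x|<\beta\}$.

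\emph{Lower bound.} We disprove uniform observability for $T<\ln(1/\cos a)$, where $\alpha=\sin a$. The first eigenfunction of $-\cL_n$ concentrates near $x=0$. For large $n$, $\tan^2x\approx x^2$, so $-\cL_n$ resembles the harmonic oscillator $-\partial_x^2+n^2x^2$, with ground state $\approx e^{-nx^2/2}$ and eigenvalue $\approx n$. More precisely, the associated Legendre function $v_{n,n}(x)=c_n\cos^n x$ is an exact eigenfunction with eigenvalue $\lambda_n=n(n+1)-n^2=n$.

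A single mode gives the wrong balance, since $\|v_{n,n}\|_{L^2(\omega_x)}^2\sim\cos^{2n}a$ while the decay is only $e^{-2nT}$. So I would instead take superpositions $g=\sum_n a_n e^{-nt}\cos^n x\,e^{iny}$ in two variables; these are holomorphic-type solutions, with $e^{-t}\cos x\,e^{iy}=:z$. Observability would then reduce to an $L^2$ inequality for polynomials $\sum a_n z^n$: the norm on the disc $|z|\le e^{-T}$ would have to be controlled by the norm on the annulus region $|z|\le e^{-t}\cos a$. Choosing polynomials concentrating near $z=e^{-T}$, for instance via a Runge or Laplace-type argument as in the Beauchard--Cannarsa--Guglielmi construction for Grushin, breaks the inequality whenever $e^{-T}>\cos a$, that is, whenever $T<\ln(1/\cos a)$. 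The key steps are the exact eigenfunction identity, the weighted-norm estimates for $\cos^{2n}x\cos x$ on $\omega_x$ versus the whole interval, and the holomorphic counterexample.

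\emph{Large-time controllability.} For each fixed $n$, the 1D singular operator $\cL_n$ is self-adjoint with compact resolvent, and the poles are handled by Hardy--Poincaré. Each $n$ is therefore controllable in any time, by a 1D Carleman estimate or the Lebeau--Robbiano approach, but with a constant $C_n(T)$.

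For large $|n|$, eigenfunctions of $\cL_n$ with eigenvalue $\lambda$ satisfy Agmon-type lower bounds on $\omega_x$ of size $e^{-n\phi(a)}$. Meanwhile the spectral gap satisfies $\lambda_{1,n}\ge n$, so the dissipation over time $T$ is $e^{-nT}$. Once $T$ exceeds the Agmon exponent, the decay beats the loss. Combining this with a dissipation/observability (Lebeau--Robbiano-type, or Miller's) scheme in $n$ yields a uniform $C_T$ for all $T\ge T^*$.

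\textbf{Main obstacle.} I expect the main obstacle to be the uniform-in-$n$ Carleman or Agmon estimate near the poles, where $n^2\tan^2x$ blows up. To handle it, I would first try a Carleman weight vanishing at $\pm\pi/2$ and absorb the singular boundary terms using the Hardy--Poincaré inequality.
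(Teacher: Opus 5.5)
\textbf{Verdict.} The lower-bound half is essentially sound. The large-time half, however, has a genuine gap.

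\textbf{The gap in the large-time part.} Uniform observability must hold for arbitrary data $g_{0,n}=\sum_{\ell\ge n}c_\ell v_{\ell,n}$. The $v_{\ell,n}$ are not orthogonal in $L^2(\omega_x;\cos x\,dx)$, so Agmon-type lower bounds on each single $\|v_{\ell,n}\|_{L^2(\omega_x)}$ give no control of $\|\sum_\ell c_\ell e^{-\lambda_{\ell,n}t}v_{\ell,n}\|_{L^2(\omega_x)}$.

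A Lebeau--Robbiano or Miller scheme would need one of two things, uniformly in $n$:
\begin{itemize}
\item a spectral inequality for finite sums of the $v_{\ell,n}$ with cost $e^{C(n+\sqrt\Lambda)}$; or
\item directly, a per-mode observability inequality on $(0,\tau)$ with cost $e^{C(n+1/\tau)}$, $C$ independent of $\tau$, which the decay $e^{-2n(T-\tau)}$ from \eqref{eq::dissipation rate} then beats for $T$ large.
\end{itemize}
Either one is exactly the uniform-in-$n$ Carleman estimate near the poles that you flag as the ``main obstacle'' and do not prove. So this half names the ingredients but not the step that carries the argument.

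\textbf{The route the paper relies on.} Following \cite{tamekue2022null}, one proves a Carleman estimate for $\cP_n=\partial_t-\partial_x^2+q_n$ after the unitary map $\operatorname U$. The weight is of type $\ln|\sin x|+A|x|+\mathrm{const}$ near $\pm\pi/2$, and Hardy--Poincar\'e absorbs the singular terms. When $0\notin\omega_x$, the weight must satisfy $\beta'\neq0$ on the unobserved interval around $0$. Then $\beta'(x)\sin x$ changes sign there, so the analogue of the last term in \eqref{eq::scalar product}, $s(2n^2-\tfrac12)\theta\,\beta'\sin x\cos^{-3}x\,|z|^2$, is no longer nonnegative (contrast Remark~\ref{rmk::key uniform}). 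Absorbing it into $s^3\theta^3|\beta'|^2|z|^2$ forces $s\gtrsim nT^2$. This is why, as the remark after Proposition~\ref{pro::uniform Carleman estimate} points out, $s$ depends on $n$ in that case, and it produces the cost $e^{Cn}$ that dissipation beats for $T$ large.

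\textbf{Alternatives.} If you want to exploit single-eigenfunction lower bounds, the right tool is the moment method of Section~\ref{ss::MM}, where biorthogonality in time removes the cross terms. You would then need lower bounds for all $\ell\ge n$ with explicit $\ell$-dependence, as in Proposition~\ref{pro:weighted estimates}. Note also that within this paper the large-time assertion is immediate from Theorem~\ref{thm::intrinsic degenerate}: since $\widetilde\omega\supset\{\alpha<x_3<\beta\}$, any $T^*>\ln(1/\sqrt{1-\alpha^2})$ works.

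\textbf{On the lower bound.} The functions $e^{-nt}v_{n,n}(x)e^{iny}$ are constant multiples of $\big(e^{-t}(x_1+ix_2)\big)^n$ and are exact solutions. Your Runge argument is correct: approximate $1/(z-z_0)$, with $\cos a<|z_0|<e^{-T}$, uniformly on $\{|z|\le\cos a\}$; a normal-family argument then forces the left-hand norms to blow up.

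However, your claim that a single mode ``gives the wrong balance'' is mistaken, and the holomorphic machinery is unnecessary. Because $\widetilde\omega$ is a full band in $y$, integrating in $y$ decouples the modes, so your polynomial inequality is diagonal in $n$. Write $v_{n,n}=\pm c_n\cos^n x$ with $c_n^2\sim\sqrt{n/\pi}$, and take $g=e^{-nt}v_{n,n}e^{iny}$. The left side is $2\pi e^{-2nT}$. The right side is at most $\frac{2\pi}{n}c_n^2\int_{\sin a}^1(1-r^2)^n\,dr\lesssim n^{-1/2}\cos^{2n}a$. The ratio is therefore $\gtrsim\sqrt n\,(e^{-T}/\cos a)^{2n}$, which is unbounded for every $T\le\ln(1/\cos a)$, endpoint included.
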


The lower-bound argument in~\cite[Section~5.1]{tamekue2022null} shows that \eqref{eq::intrinsic parabolic BG equation} is not null controllable from such a region when $T\le \ln (1 / \sqrt{1-\alpha^2})$.
The same obstruction already applies when the control set is a single crown away from the equator~\cite[Proposition~2.5.1]{tamekue2023controllability}. 

The main contribution below is to prove the matching upper bound.

\begin{theorem}\label{thm::intrinsic degenerate}
		Let $\widetilde{\omega} = \{ (x_1,x_2,x_3)\in \mathbb S^2\mid \alpha<x_3< \beta \}$ with $0\le\alpha<\beta\le 1$. Then the minimal time required for null controllability of the control system \eqref{eq::intrinsic parabolic BG equation} from $\widetilde{\omega}$ is $T_{\min}(\widetilde{\omega}) = \ln (1 / \sqrt{1-\alpha^2})$. More precisely,
  \begin{enumerate}
      \item For every $T>\ln (1 / \sqrt{1-\alpha^2})$, system \eqref{eq::intrinsic parabolic BG equation} is null controllable from $\widetilde{\omega}$ in time $T$;
      \item For every $T\le\ln (1 / \sqrt{1-\alpha^2})$, system \eqref{eq::intrinsic parabolic BG equation} is not null controllable from $\widetilde{\omega}$ in time $T$.
  \end{enumerate}
\end{theorem}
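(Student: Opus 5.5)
Part 2 is obtained from the known obstruction. When $\alpha>0$, the argument of \cite[Section~5.1]{tamekue2022null} and \cite[Proposition~2.5.1]{tamekue2023controllability} rules out null controllability from a single crown away from the equator for $T\le\ln(1/\sqrt{1-\alpha^2})$. When $\alpha=0$ the threshold is $0$, and part 2 is vacuous. The substance is therefore part 1. We work in the coordinates $\Phi$, where $\widetilde\omega$ becomes the strip $\omega=(a,b)\times[0,2\pi)$ with $\sin a=\alpha$, $\sin b=\beta$. Expanding in Fourier series in $y$ reduces the problem to the family $\partial_t-\cL_n$ on $(-\pi/2,\pi/2)$ with weight $\cos x$. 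Observability constants must be uniform in $n$, with explicit control of their growth.

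\textbf{Step 1: strips containing the equator in their interior.} For $\omega\supset(-\delta,\delta)\times[0,2\pi)$ we prove an observability inequality for $\partial_t-\cL_n$ from $(-\delta,\delta)$ that is uniform in $n$. We use a Carleman weight vanishing near $x=0$ and large toward the poles. The potential $n^2\tan^2x$ is nonnegative and increases away from $0$, so its commutator term has a good sign. The singular terms coming from $\cos x$ at $x=\pm\pi/2$ are absorbed by a Hardy--Poincaré inequality of the form $\int v^2\cos x\,dx\lesssim \int (v')^2 \cos x\,dx$ with suitable weights near the poles. Summing over $n$ gives observability and hence null controllability in every $T>0$. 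This handles $\alpha=0<\beta$.

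\textbf{Step 2: the pole-touching strip $(a,\pi/2)$ with $a>0$.} For each $n$ the eigenfunctions of $-\cL_n$ are the normalized associated Legendre functions $v_{\ell,n}(\sin x)$, with eigenvalues $\lambda_\ell=\ell(\ell+1)$ for $\ell\ge|n|$. This sequence satisfies a gap condition, and $\sum 1/\lambda_\ell<\infty$. The moment method therefore yields a biorthogonal family $q_{\ell}$ to $e^{-\lambda_\ell t}$ in $L^2(0,T)$ with $\|q_\ell\|\le C_\varepsilon e^{\varepsilon\lambda_\ell}$. We look for a control of the form $u_n(t,x)=\sum_\ell c_\ell q_\ell(T-t)v_{\ell,n}(x)\mathbf 1_{(a,\pi/2)}$. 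Solving the moment problem requires dividing by $\int_a^{\pi/2}|v_{\ell,n}|^2\cos x\,dx$, which is bounded below by $C_{\ell,n}\cos^{2n+2}a$. This gives a cost of order $C_{\ell,n}^{-1}(\cos a)^{-2n-2}e^{\varepsilon\lambda_\ell}e^{-\lambda_\ell T}$, or better a cost in $n$ of order $e^{-n(T-\ln(1/\cos a))}$ using $\lambda_\ell\ge n(n+1)\ge n$. For the lowest modes the relevant decay is $e^{-nT}$ against $(\cos a)^{-n}$, which is exactly where the threshold $T>\ln(1/\cos a)$ enters. We need $C_{\ell,n}^{-1}$ to grow at most like $e^{\varepsilon\ell^2}$ or polynomially. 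The plan is to derive it from the explicit form $P_\ell^n(s)=(1-s^2)^{n/2}\,\frac{d^n}{ds^n}P_\ell(s)$ near $s=1$, restricting the integral to a neighbourhood $(\sin a,\sin a+\eta)$ and using Bernstein/Markov-type lower bounds. This Legendre lower bound, uniform in both $\ell$ and $n$, is the main obstacle. Summing the geometric series in $n$ gives null controllability for every $T>\ln(1/\cos a)$.

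\textbf{Step 3: general strip $(a,b)$ with $b<\pi/2$.} Fix $T>\ln(1/\cos a)$. Choose $\chi(x)$ equal to $1$ near $[b',\pi/2]$ and supported in $(a',\pi/2)$, with $a<a'<b'<b$ and $a'$ close enough to $a$ that $T>\ln(1/\cos a')$. First steer $f_0$ to zero on $[0,T/2]\cup[T/2,T]$ using Step 2 on $(a',\pi/2)$ and Step 1 separately. Concretely, let $g$ be a controlled trajectory from $(a',\pi/2)$ with $g(T)=0$, and let $h$ be the free solution. Set $f=\chi g+(1-\chi)h+w$, where $w$ corrects the commutator source $[\cL,\chi](g-h)$. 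That source is supported in $(a',b')\subset(a,b)$ and can be placed inside the control. The harder part is the region near the equator, which is handled by the symmetric cut-off. Using a cut-off $\theta(t)$, we solve the residual terms supported in $(-\pi/2,0]$ on the final interval with the arbitrary-time controllability of Step 1 applied to a strip $(-\delta,b)$ containing $x=0$. Since control on $(-\delta,0]$ is not allowed, it must be transported into $(a,b)$. We do this by a reflection/duality argument: compose the cut-off source decomposition so that all non-allowed control terms are commutators supported in $(a,b)$. The main difficulty remains Step 2.
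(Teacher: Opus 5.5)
Your Part~2 and Step~1 agree with the paper, but the upper bound does not follow as proposed.

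\textbf{Step~3 fails as written, and the case $\alpha=0$ is misassigned.} With $\chi\equiv1$ near $[b',\pi/2]$, the term $\chi g$ keeps the Step~2 control $\chi u_g$ on $(a',\pi/2)\supset(b,\pi/2)$, which lies outside the crown. Using the free solution $h$ on the south side leaves $f(T)=(1-\chi)h(T)\neq0$. Your repair, which controls the residual from $(-\delta,b)$ and then ``transports'' the control on $(-\delta,0]$ into $(a,b)$ by reflection/duality, is not an available operation: it is the original controllability question. No time splitting is needed either; halving $[0,T]$ would require $T/2>\ln(1/\cos a)$.

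The missing idea is to glue two \emph{controlled} trajectories on the same interval $[0,T]$, with the cut-off oriented the other way. Let $f^R$ be steered from $f_0$ to $0$ by a control $u_R$ supported in $(a,\pi/2)$ (Step~2), and $f^L$ by a control $u_L$ supported in $(-b,b)$ (Step~1). Pick $a<a_1<b_1<b$, take $\chi=0$ for $x\le a_1$ and $\chi=1$ for $x\ge b_1$, and set $f=(1-\chi)f^R+\chi f^L$. Then $f(0)=f_0$ and $f(T)=0$. The source
$(1-\chi)u_R+\chi u_L+2\chi'\partial_x(f^R-f^L)+(\chi''-\tan x\,\chi')(f^R-f^L)$
is supported in $(a,b_1)\cup(a_1,b)\subset(a,b)$. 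It lies in $L^2$ because $f^R,f^L\in L^2(0,T;\mathcal V_\sigma)$ and $\chi'$, $\chi''$, $\tan x\,\chi'$ are bounded.

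Step~1 also does not cover $\alpha=0<\beta$. The crown $\{0<x_3<\beta\}$ is the strip $(0,b)$, which contains no neighbourhood of $x=0$, whereas your uniform Carleman estimate observes from $(-\delta,\delta)$; the paper notes its weight breaks down at $0$ for such sets. That case must go through the moment method with $a=0$ (threshold $0$) followed by the same gluing, so Step~2 must not exclude $a=0$.

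\textbf{Step~2 uses the wrong spectrum and leaves its crux unproved.} The eigenvalues of $-\cL_n$ are $\lambda_{\ell,n}=\ell(\ell+1)-n^2$, not $\ell(\ell+1)$; the latter are the Laplace--Beltrami eigenvalues. This is exactly where the threshold comes from: $\lambda_{n,n}=n$, so the bottom mode of each branch decays only like $e^{-nT}$. With your eigenvalues, $e^{-n(n+1)T}$ would beat $\cos^{-2n-2}a$ for every $T>0$, and there would be no minimal time, contradicting Part~2.

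Consequently, your requirement that $C_{\ell,n}^{-1}$ grow at most like $e^{\varepsilon\ell^2}$ is insufficient. At $\ell=n$ it allows a loss $e^{\varepsilon n^2}$ against a gain of only $e^{-2(T-\varepsilon)n}\cos^{-2n-2}a$. Two things are needed:
\begin{enumerate}
\item $\sum_{\ell\ge n}\cos^{-2n-2}a\,C_{\ell,n}^{-1}e^{-2(T-\varepsilon)\lambda_{\ell,n}}$ must be bounded uniformly in $n$, in particular with no exponential-in-$n$ loss at $\ell=n$.
\item $\|q_\ell^n\|_{L^2(0,T)}\le K(\varepsilon)e^{\varepsilon\lambda_{\ell,n}}$ with $K$ independent of $n$. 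This follows from Fattorini--Russell, because the reindexed sequences $\lambda_{n+j-1,n}$ start at $n\ge1$, have gaps $2(n+j)\ge2$, and satisfy $\lambda_{n+j-1,n}\ge(j-1)^2$.
\end{enumerate}

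The Legendre lower bound itself is only announced in your proposal. The paper does not localize. The substitution $t=1-(1-\sin a)u$ extracts exactly $(1-\sin a)^{n+1}(1+\sin a)^n\ge\frac12\cos^{2n+2}a$. Then, with $S(u)=P_\ell^{(n)}(1-(1-\sin a)u)$ a polynomial of degree $\ell-n$, the integral $\int_0^1u^nS(u)^2\,du$ is bounded below by $S(0)^2/K_{\ell,n}(0,0)$. This is the Christoffel function at $0$ for the weight $u^n$ on $[0,1]$. Here $S(0)=P_\ell^{(n)}(1)$ is explicit, and $K_{\ell,n}(0,0)=(n+1)\binom{\ell+1}{n+1}^2$ is computed with the shifted Jacobi polynomials $P_k^{(0,n)}(2u-1)$ and a telescoping identity.

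This gives $C_{n+m,n}^{-1}\le 8(n+m+1)(n+m)^m$, which is only linear in $n$ at $m=0$. Since $\lambda_{n+m,n}=n+m(2n+1)+m^2$, the factor $(n+m)^m$ is absorbed by $e^{-2(T-\varepsilon)(m(2n+1)+m^2)}$.
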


\begin{remark}
    Since $\alpha=0$ is allowed in Theorem~\ref{thm::intrinsic degenerate}, the system is null controllable in every positive time when the equator belongs to the boundary of the control set.
\end{remark}

The second theorem treats the case where the control region contains the equator in its interior.

\begin{theorem}\label{thm::intrinsic non-degenerate}
		Consider the control set $\widetilde{\omega} = \{ (x_1,x_2,x_3)\in \mathbb S^2\mid |x_3|< \beta \}$ with $0<\beta\le 1$. Then the control system \eqref{eq::intrinsic parabolic BG equation} is null controllable from $\widetilde{\omega}$ in an arbitrarily short time $T>0$.
\end{theorem}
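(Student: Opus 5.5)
The plan is to reduce Theorem~\ref{thm::intrinsic non-degenerate} to an observability inequality for the adjoint system and then to Fourier-decompose it in the periodic variable. Pulling back by $\Phi$, the control set becomes $\omega=(-b,b)\times[0,2\pi)$ with $\beta=\sin b$. Since $\mathcal L$ is self-adjoint and nonpositive on $L^2(\S^2;\mu)$, the standard HUM duality shows that null controllability in time $T$ is equivalent to the estimate
\[
\|g(T)\|^2_{L^2(\Omega,d\sigma)}\le C_T\int_0^T\!\!\int_{\omega}|g|^2\,d\sigma\,dt
\]
for all solutions of $\partial_t g-\Delta_{\mathrm{BG}}g=0$. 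We then expand $g(t,x,y)=\sum_{n\in\Z} g_n(t,x)e^{iny}$. Because $\omega$ is a full strip in $y$, Parseval reduces the estimate to the family of one-dimensional inequalities
\[
\|g_n(T)\|^2_{L^2(\cos x\,dx)}\le C_T\int_0^T\!\!\int_{-b}^{b}|g_n|^2\cos x\,dx\,dt,
\qquad \partial_t g_n=\cL_n g_n \text{ on } (-\tfrac\pi2,\tfrac\pi2),
\]
and the whole point is to prove them with $C_T$ independent of $n$. The case $n=0$ is the ordinary Laplace--Beltrami on zonal functions and is classical, so we may assume $|n|\ge1$.

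For the uniform estimate we use a global Carleman inequality for $\cL_n$ on $(-\pi/2,\pi/2)$. We choose a weight $\psi(x)$ whose critical points lie inside $(-b,b)$; for instance, $\psi$ is increasing in $|x|$ outside a small neighbourhood of $0$ and is adapted to the degenerate measure $\cos x\,dx$ near the poles. Set $\theta(t)=1/(t(T-t))$ and conjugate with $e^{-s\theta\psi}$. The potential term $-n^2\tan^2x$ is nonpositive, so it enters the Carleman computation with a favourable sign, exactly as in the Baouendi--Grushin arguments of \cite{beauchard2014}. Its contribution is $n^2\tan^2x\,|w|^2$ multiplied by $\psi''$-type factors, and the cross terms between $n^2\tan^2 x$ and $\psi'$ produce $n^2 s\theta\,\partial_x(\tan^2x)\psi'|w|^2$. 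This term is nonnegative wherever $x\psi'(x)\ge0$, which we arrange, so every $n$-dependent term is either positive or supported in $(-b,b)$. The resulting estimate is uniform in $n$, and combining it with the dissipation $\frac{d}{dt}\|g_n\|^2\le0$ gives the uniform observability inequality in the usual way.

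The main obstacle is the behaviour at the poles $x=\pm\pi/2$, where $\cos x$ vanishes, $\tan^2x$ blows up, and the boundary terms in the integrations by parts are not obviously zero. Here we first use the fact that elements of the domain of $\cL_n$ with $n\neq0$ satisfy $n^2\int\tan^2x|w|^2\cos x<\infty$, so $w$ vanishes at the poles in a weak sense. We then rewrite the problem in the variable $z=\pi/2-|x|$ near the poles, where $\cos x\sim z$ and $\tan^2 x\sim z^{-2}$. The lower-order singular terms produced by the weight, of the form $s\theta\,|w|^2/z^2$ times bounded factors, are absorbed by the Hardy--Poincaré inequality
\[
\int_0^{\delta}\frac{|w|^2}{z^{2}}\,z\,dz\le C\int_0^{\delta}\big(|w_z|^2+n^2z^{-2}|w|^2\big)z\,dz,
\]
which holds uniformly for $|n|\ge1$ thanks to the $n^2\tan^2x$ term itself. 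If the boundary bookkeeping proves too delicate, a fallback is to cut off with $\chi$ supported away from $(-b/2,b/2)$. Away from the equator $\cL_n$ is uniformly elliptic in $(x,y)$, so the two polar caps can be handled by a Lebeau--Robbiano or Carleman estimate for the nondegenerate two-dimensional operator on the sphere minus a neighbourhood of the equator, with observation in $\omega\setminus\{|x|<b/2\}$. The commutator terms are supported in $\omega$, and combining this with an interior uniform estimate near $x=0$ yields arbitrary-time null controllability.
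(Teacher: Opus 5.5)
Your architecture is the paper's: duality, Fourier reduction, the zero mode set aside, and a Carleman estimate with $s$ independent of $n$ because the only $n$-dependent contribution is $s\theta\,\partial_x(n^2\tan^2x)\,\psi'|w|^2\ge0$, followed by decay in $t$. However, the step you single out as the main obstacle does not work as written.

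\textbf{The pole step.} Since $\psi$ has no critical point outside $(-b,b)$, we have $\psi'(\pm\pi/2)=\pm\psi_1$ with $\psi_1>0$ (say $\psi$ even). Conjugating $A=\frac{1}{\cos x}\partial_x(\cos x\,\partial_x)$ in $L^2(\cos x\,dx)$ then puts the term $\frac{s}{2}\iint (A^2\varphi)|w|^2\cos x\,dx\,dt$ into the cross product. Its leading part near the poles is $-\frac{s\theta\psi_1}{2}z^{-3}|w|^2$, with $z=\pi/2-|x|$. This is one power of $z$ worse than the $s\theta|w|^2/z^2$ you claim.
\begin{itemize}
\item Against $\cos x\,dx\approx z\,dz$ this term is $\int|w|^2z^{-2}\,dz$.
\item No bound by $\int(|w_z|^2+n^2z^{-2}|w|^2)\,z\,dz$, even with $s^3\theta^3\int|w|^2z\,dz$ added, can control it. Take $w=1$ on $[\epsilon,\delta/2]$: the ratio is of order $1/(\epsilon n^2\ln(1/\epsilon))$.
\item The inequality you display is trivial anyway, with $C=1$.
\end{itemize}
What absorbs this term is the cross term you already identified. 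Near the poles it equals $2n^2 s\theta\psi_1 z^{-3}|w|^2$, which is of the same order. The net coefficient is therefore $s\theta\psi_1(2n^2-\tfrac12)z^{-3}$, positive exactly when $n\neq0$; this is also why the zero mode must be set aside.

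The paper makes this exact, with no remainder, by first conjugating with $v\mapsto\sqrt{\cos x}\,v$.
\begin{itemize}
\item The potential becomes $(n^2-\tfrac14)\tan^2x-\tfrac12$.
\item The only $n$-dependent term is $s(2n^2-\tfrac12)\theta\,\beta'\sin x\,\cos^{-3}x\,|w|^2$.
\item Boundary terms at $\pm\pi/2$ vanish because domain elements and their derivatives are $o(1)$ there.
\end{itemize}

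\textbf{Two smaller points.} First, the terms ``$n^2\tan^2x\,|w|^2$ times $\psi''$-type factors'' cancel identically, leaving only $\partial_x(n^2\tan^2x)\psi'$. This cancellation matters, because $\psi''$ must change sign in $(-b,b)$. Second, and for the same reason, $x\psi'(x)\ge0$ is needed on all of $(-\pi/2,\pi/2)$, including inside $(-b,b)$. An $n$-dependent term of the wrong sign in the control region cannot be absorbed by $s^3\theta^3|w|^2$ unless $s\theta\gtrsim n$, so ``supported in $(-b,b)$'' is not an acceptable alternative. This is exactly the role of $\beta'\sin x\ge0$ on $[-b',b']$ in the paper.

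\textbf{Your fallback is a sound and genuinely different route.} Intrinsically, the caps $\{|x_3|\ge\sin(b/2)\}$ are a smooth, uniformly elliptic region. The blow-up of $\tan^2x$ is a coordinate artefact, so ellipticity should be read on $\S^2$, not in $(x,y)$. The argument then runs as follows.
\begin{itemize}
\item Apply a standard Fursikov--Imanuvilov estimate to $\chi g$ on the caps.
\item Bound the commutator, which is supported in $\omega$ away from the equator, by a weighted Caccioppoli estimate.
\item The band $\{|x|<b/2\}\subset\omega$ is observed trivially.
\end{itemize}
This gives observability directly. It needs no Fourier decomposition, no uniformity in $n$, no Hardy inequality, and no ``interior estimate near $x=0$''. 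It also covers every open control set containing the equator, whereas the paper's mode-by-mode argument requires $\omega$ to be invariant in $y$.
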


\begin{remark}\label{rmk:important}
  The symmetry assumption in Theorem~\ref{thm::intrinsic non-degenerate} is used only to simplify the exposition and the figure. The proof extends to nonsymmetric control regions that contain a neighborhood of the equator.
\end{remark}

The proofs are carried out in latitude--longitude coordinates, as in~\cite{tamekue2022null}. In these coordinates, \eqref{eq::intrinsic parabolic BG equation} becomes
\begin{equation}\label{eq::spherical parabolic BG equation}
	\begin{cases}
		\displaystyle\partial_tf-\Delta_{\operatorname{BG}}f = u\mathbf 1_\omega,&\mbox{ in }(0,T)\times\Omega,\cr
		\displaystyle f|_{t=0} = f_{0},&\mbox{ in }\Omega,
	\end{cases}
\end{equation}
together with the boundary conditions induced by the latitude-longitude coordinates; see~\cite[Lemma~2.3]{tamekue2022null}. Here $\omega\subset\Omega$, $f=f(t,x,y)$ is the state, $f_0=f_0(x,y)$ is the initial datum, and $u=u(t,x,y)$ is the control.

The coordinate system \eqref{eq::spherical parabolic BG equation} is null controllable from $\omega\subset\Omega$ in time $T>0$ if, for every $f_0\in L^2(\Omega;\sigma)$, there exists $u\in L^2(0,T;L^2(\Omega;\sigma))$ such that $f(T,\cdot,\cdot)=0$.

Throughout the paper, let $0\le a<b\le\pi/2$ be such that $\alpha=\sin a$ and $\beta=\sin b$. We set
\begin{align}
    \Gamma:=\Gamma_{b}\times[0,2\pi),\qquad\mbox{with}\qquad\Gamma_{b} := (-b,b).\label{eq::degenerate control set in spherical coordinates}\\
    \omega:=\omega_{a,b}\times[0,2\pi),\qquad\mbox{with}\qquad\omega_{a,b} = (a,b).\label{eq::control set in spherical coordinates}
\end{align}

Then, Theorem~\ref{thm::intrinsic degenerate} is equivalent to the following.
\begin{theorem}\label{thm::main-spherical coordinates degenerate}
	Let $\omega$ be defined as in \eqref{eq::control set in spherical coordinates}. Then the minimal time required for null controllability of the control system \eqref{eq::spherical parabolic BG equation} from $\omega$ is $T_{\min}(\omega)=\ln (1 / \cos a)$. More precisely,
	  \begin{enumerate}
      \item For every $T>\ln (1 / \cos a)$, system \eqref{eq::spherical parabolic BG equation} is null controllable from $\omega$ in time $T$;
      \item For every $T\le\ln (1 / \cos a)$, system \eqref{eq::spherical parabolic BG equation} is not null controllable from $\omega$ in time $T$.
  \end{enumerate}
\end{theorem}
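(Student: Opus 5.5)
The plan has two halves. Part 2 (the lower bound) I take from the earlier work: the argument of \cite[Section~5.1]{tamekue2022null}, as adapted to a single crown in \cite[Proposition~2.5.1]{tamekue2023controllability}, gives non-controllability for $T\le\ln(1/\cos a)$ when $a>0$; for $a=0$ there is nothing to prove. The work is Part 1, the upper bound, which I split into three steps.

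\textbf{Step 1 (crowns containing the equator).} Expand $f$ in the Fourier modes $e^{iny}$, which reduces the problem to the family $\partial_t-\cL_n$ on $(-\pi/2,\pi/2)$ with weight $\cos x$. For each $n$ I will prove a Carleman estimate on $(0,T)\times(-\pi/2,\pi/2)$ with control region $(-b,b)$, with constants uniform in $n$.
\begin{itemize}
\item Because the control region contains $x=0$, I can choose a weight whose gradient vanishes only inside the control region. The degeneracy is then harmless, and the potential $n^2\tan^2x$ enters with a favourable sign.
\item The boundary terms at $x=\pm\pi/2$ come from the singular coefficients $\cos x$ and $\tan^2 x$. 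I absorb them with a Hardy--Poincar\'e inequality of the form $\int |v|^2/\cos^2x\,\cos x\,dx\lesssim\int |v'|^2\cos x\,dx+\dots$.
\end{itemize}
This yields an observability inequality $\|v_n(T)\|^2\le C_T\int_0^T\int_{-b}^{b}|v_n|^2$ uniformly in $n$. Summing over $n$ (Parseval) proves Theorem~\ref{thm::intrinsic non-degenerate} for every $T>0$.

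\textbf{Step 2 (pole-touching crown $(a,\pi/2)$).} I use the moment method. The eigenfunctions of $\cL_n$ are the associated Legendre functions $v_{\ell,n}(x)=c\,P_\ell^n(\sin x)$, $\ell\ge|n|$, with eigenvalues $\ell(\ell+1)$. For each $n$:
\begin{itemize}
\item Build a biorthogonal family to $\{e^{-\ell(\ell+1)t}\}_{\ell\ge n}$ in $L^2(0,T)$. The gaps $\lambda_{\ell+1}-\lambda_\ell=2(\ell+1)$ grow, so the standard Fattorini--Russell/Tenenbaum--Tucsnak bounds give norms $\lesssim e^{\varepsilon\lambda_\ell+C_\varepsilon}$, uniformly in $n$ because all these spectra sit inside $\{\ell(\ell+1)\}$.
\item Take the control $u_n(t,x)=\sum_\ell \frac{-e^{-\lambda_\ell T}\langle f_{0,n},v_{\ell,n}\rangle}{\|v_{\ell,n}\|^2_{L^2(a,\pi/2)}}\,q_\ell(T-t)\,v_{\ell,n}(x)\mathbf 1_{(a,\pi/2)}$. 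Normalising by $\|v_{\ell,n}\|^2_{L^2(a,\pi/2)}$ is what makes the moments match.
\end{itemize}
The cost is governed by the weighted lower bound $\int_a^{\pi/2}|v_{\ell,n}|^2\cos x\,dx\ge C_{\ell,n}\cos^{2n+2}a$. To prove it, note that $v_{\ell,n}\sim(\cos x)^n\times(\text{polynomial in }\sin x)$. I would write it via Rodrigues' formula, $P_\ell^n(s)=(1-s^2)^{n/2}\,\partial_s^{\ell+n}(s^2-1)^\ell/(2^\ell\ell!)$, and bound the polynomial factor from below near $s=1$ (its value at $s=1$ is explicit) together with a Markov/Remez-type control on $[\sin a,1]$. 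The goal is $C_{\ell,n}$ at most polynomial or subexponential in $\ell$, and uniform enough in $n$.

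Combining the pieces, $\|u_n\|\lesssim \sum_\ell e^{-\lambda_\ell(T-\varepsilon)}\cos^{-2n}a\,(\text{poly})\,\|f_{0,n}\|$. Since $\lambda_\ell\ge n^2$, we have $e^{-\lambda_\ell(T-\varepsilon)}\cos^{-2n}a\le e^{-n^2(T-\varepsilon)+2n\ln(1/\cos a)}$, which is summable in $\ell$ and bounded in $n$. A more careful accounting should capture the sharp threshold: the relevant comparison is $e^{-nT}$ from the ground energy $\lambda_{n,n}\approx n^2$ against $\cos^{-n}a$, so it must be organised as $e^{-n(T-\ln(1/\cos a))}$ times subexponential factors, and $T>\ln(1/\cos a)$ is exactly what makes this decay. \textbf{I expect this to be the main obstacle:} getting the lower bound on the Legendre mass sharp in its exponential dependence on $n$, with constants that do not destroy the threshold.

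\textbf{Step 3 (general crown $(a,b)$).} Fix $T>\ln(1/\cos a)$ and pick $a<a'<b'<b$ with $T>\ln(1/\cos a')$ still true.
\begin{itemize}
\item Step 2 gives a control $u_1$ on $(a',\pi/2)$ steering $f_0$ to $0$, with solution $f_1$.
\item Take a cut-off $\chi(x)$ equal to $1$ for $x\le a'$ and $0$ for $x\ge b'$, and set $g=\chi f_1$. Then $g(T)=0$, and $\partial_tg-\Delta_{\mathrm{BG}}g=\chi u_1\mathbf 1+[\chi,\Delta_{\mathrm{BG}}]f_1$. The commutator involves only $\partial_x$, so this source is supported in $(a',b')\subset(a,b)$ and lies in $L^2$ by parabolic regularity (for $\partial_xf_1$ in $L^2$).
\item This does not yet work directly, because $\chi u_1$ is supported in $(a',b')$ only where $\chi\neq0$, and that support may include $(a',\pi/2)$. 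I need $\chi$ supported near the south side instead. So take $\chi=1$ on $x\le a''$ and $0$ for $x\ge b'$, with $a<a''<a'$. Then $g$ coincides with $f_1$ for $x<a''$, solves the free equation on $x<a''$, and both the commutator and $\chi u_1$ are supported in $(a'',b')\subset(a,b)$.
\item The remainder $(1-\chi)f_1$ lives in $x>a''$, away from the south pole but touching the north pole. If needed, a second correction handles it: a symmetric reflection argument, or Step 1's arbitrary-time controllability of crowns containing the equator, used as in the abstract to absorb the north-pole part.
\end{itemize}
So $g$ with control $\chi u_1+[\chi,\Delta]f_1$ drives $f_0$ to zero on $\{x<a''\}$ with support in $\omega$. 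I then finish using controllability on the upper region via Step 1 applied to an auxiliary problem.
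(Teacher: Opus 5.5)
Your outline has the same architecture as the paper: Part~2 from the earlier work, uniform Carleman estimates for $(-b,b)$, a moment method on $(a,\pi/2)$ driven by a Legendre mass bound, then a cut-off. However, Step~2 uses the wrong spectrum. Since $-n^2\tan^2x=-n^2/\cos^2x+n^2$, the eigenvalues of $-\cL_n$ are $\lambda_{\ell,n}=\ell(\ell+1)-n^2$, not $\ell(\ell+1)$. In particular the bottom of the $n$-th spectrum is $\lambda_{n,n}=n$, not $\approx n^2$, and this is exactly what creates the threshold. Two consequences follow.
\begin{itemize}
\item Your summation via ``$\lambda_\ell\ge n^2$'' is false. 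If it held, it would give null controllability for every $T>0$, contradicting Part~2. Your later remark that the comparison is $e^{-nT}$ against $\cos^{-n}a$ is correct, but it is inconsistent with $\lambda_{n,n}\approx n^2$.
\item Your argument for uniform biorthogonal bounds (``all spectra sit inside $\{\ell(\ell+1)\}$'') does not apply. The true spectra are translates by $-n^2$ of such subsets. Transporting the bound through $q_\ell(t)=p_\ell(t)e^{-n^2(T-t)}$ only yields $\|q_\ell\|\le Ke^{\varepsilon(\lambda_{\ell,n}+n^2)}$, and the factor $e^{\varepsilon n^2}$ is fatal against $e^{-(T-\varepsilon)n}$.
\end{itemize}
You must instead check the Fattorini--Russell hypotheses uniformly for the reindexed sequences $\sigma_j^{(n)}=\lambda_{n+j-1,n}=n+(2n+1)(j-1)+(j-1)^2$: first term $\ge1$, gaps $2(n+j)\ge2$, and $\sigma_j^{(n)}\ge(j-1)^2$. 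This is Lemma~\ref{lem::uniform lambda FR}.

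The weighted Legendre bound is the genuinely new input, and you only flag it as the main obstacle. The Markov/Remez route you sketch is not adequate as it stands. Markov's inequality only guarantees $P_\ell^{(n)}\ge\frac12P_\ell^{(n)}(1)$ on a window of length $\sim(\ell-n)^{-2}$ at $t=1$. Relative to the full window $y=1-\sin a$, this loses a factor of order $(m^2y)^{n+1}$, where $m=\ell-n$. A loss with exponent $n$, paired with $e^{-2\beta m(2n+1)}$, leaves an $e^{cn}$ growth with $c>0$ for small $\beta$, and so it moves the threshold.

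The paper avoids this with an exact Christoffel-function bound. After $t=1-yu$, all the $n$-exponential dependence sits in $y^{n+1}(1+\sin a)^n\ge\frac12\cos^{2n+2}a$. For the polynomial $S(u)=P_\ell^{(n)}(1-yu)$ of degree $\le m$, one has $\int_0^1u^nS^2\,du\ge S(0)^2/K_{\ell,n}(0,0)$ uniformly in $y$. Here $K_{\ell,n}(0,0)=\sum_{k\le m}(2k+n+1)\binom{k+n}{k}^2=(n+1)\binom{\ell+1}{n+1}^2$, computed with shifted Jacobi polynomials $P_k^{(0,n)}$. This gives $1/C_{n+m,n}\lesssim(n+m+1)^2(n+m)^m$. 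Contrary to your goal, this is not subexponential in $\ell$, but its exponent is $m$ rather than $n$. Hence $e^{-2\beta(m(2n+1)+m^2)}$ absorbs it, and what remains is $e^{2(n+1)\ln(1/\cos a)-2\beta n}$ up to polynomial factors. That is bounded in $n$ precisely because $\beta=T-\varepsilon>\ln(1/\cos a)$.

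Finally, Step~3 is not carried out: $\chi f_1$ starts from $\chi f_0$, not $f_0$, and the ``second correction'' is left open. The paper glues two controlled trajectories from the same $f_0$: $f=(1-\chi)f^R+\chi f^L$, where $f^R$ is controlled from $(a,\pi/2)$, $f^L$ is controlled from $(-b,b)$, and $\chi$ switches from $0$ to $1$ inside $(a,b)$. Then $(1-\chi)u_R$, $\chi u_L$ and $[\Delta_{\operatorname{BG}},\chi](f^R-f^L)$ are all supported in $(a,b)$. Your variant can also be completed. Add $\psi h$, where $h$ is controlled from $(-b,b)$ starting at $(1-\chi)f_0$, and $\psi$ is a second cut-off equal to $1$ on $\operatorname{supp}(1-\chi)$ and vanishing below a point of $(a,a'')$.
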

\begin{remark}
      For $a>0$, the non-controllability statement for every $T\le\ln(1/\cos a)$ was proved in~\cite[Section~5.1]{tamekue2022null}. Hence, the new insight of Theorem~\ref{thm::main-spherical coordinates degenerate} is the upper bound.
\end{remark}
Similarly, Theorem~\ref{thm::intrinsic non-degenerate} is equivalent to the following. 
\begin{theorem}\label{thm::intrinsic SBG non-degenerate}
	Let $\Gamma$ be defined as in \eqref{eq::degenerate control set in spherical coordinates}. Then, the control system \eqref{eq::spherical parabolic BG equation} is null controllable from $\Gamma$ in an arbitrarily short time $T>0$.
\end{theorem}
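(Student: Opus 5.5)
We prove Theorem~\ref{thm::intrinsic SBG non-degenerate} by reducing, via duality, to a uniform family of one-dimensional observability inequalities. By the Hilbert Uniqueness Method, null controllability from $\Gamma$ in time $T$ is equivalent to the observability inequality
\[
\|g(T)\|_{L^2(\Omega;\sigma)}^2\le C_T\int_0^T\!\!\int_\Gamma |g|^2\,d\sigma\,dt
\]
for all solutions of the adjoint equation $\partial_t g-\Delta_{\mathrm{BG}}g=0$. Since $\Gamma=\Gamma_b\times[0,2\pi)$ is invariant in $y$, we expand $g(t,x,y)=\sum_{n\in\Z}g_n(t,x)e^{iny}$. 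By Parseval, the inequality above is equivalent to
\[
\|g_n(T)\|^2_{L^2((-\frac{\pi}{2},\frac{\pi}{2});\cos x\,dx)}\le C_T\int_0^T\!\!\int_{-b}^{b}|g_n|^2\cos x\,dx\,dt
\]
for solutions of $\partial_t g_n=\cL_n g_n$, with $C_T$ independent of $n$. The mode $n=0$ is a nondegenerate Legendre-type problem, covered by standard results. The whole task is therefore the uniformity in $|n|\ge1$.

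For uniformity we prove a Carleman estimate for $\partial_t-\cL_n$ on $(-\pi/2,\pi/2)$. We take a weight $e^{-s\varphi(x)/(t(T-t))}$ with $\varphi$ built from a function $\psi$ whose critical points lie inside $(-b,b)$, i.e.\ inside the observation region, as in \cite{beauchard2014}. The key structural point is that the potential $n^2\tan^2x$ is large exactly away from $x=0$. On $\{|x|\ge b'\}$ with $b'<b$, the dissipation satisfies
\[
-\langle\cL_n g,g\rangle\ge n^2\tan^2 b'\,\|g\|^2 .
\]
Because the potential is nonnegative and grows with $|x|$, the cross terms it produces in the conjugated operator, of order $s\,n^2\tan x\sec^2x\,\psi'$, can be given a favorable sign. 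This is achieved by choosing $\psi$ monotone outward ($\psi'(x)\,x<0$ away from the control region) near the poles. The large parameter $s$ then only needs to dominate terms independent of $n$.

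The singular endpoints $x=\pm\pi/2$ are the main obstacle. There the weight $\cos x$ vanishes, $\tan x$ blows up, and the boundary terms from integrating by parts are not obviously controlled. We absorb them with a Hardy--Poincaré inequality of the form
\[
\int \frac{|v|^2}{\cos^2 x}\cos x\,dx\le C\int\big(|v'|^2+n^2\tan^2x\,|v|^2\big)\cos x\,dx+C\int|v|^2\cos x\,dx .
\]
For $n\ne0$ this follows essentially from the potential term itself. Uniformly, it follows from a one-dimensional Hardy inequality in $r=\pi/2-|x|$ with weight $r$. We also choose $\varphi$ so that its derivative vanishes at the poles. That choice makes the boundary terms vanish for functions in the form domain, which is where the induced boundary conditions of \cite[Lemma~2.3]{tamekue2022null} enter. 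I expect this step, checking that every singular term is either signed or Hardy-absorbable with constants uniform in $n$, to be the delicate part.

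From the Carleman estimate we obtain
\[
\int_{T/4}^{3T/4}\|g_n\|^2\,dt\le C\int_0^T\!\!\int_{-b}^b|g_n|^2\cos x\,dx\,dt ,
\]
with $C$ uniform in $n$. The dissipativity of $\cL_n$, namely that $t\mapsto\|g_n(t)\|$ is nonincreasing, then gives the observability inequality with a uniform constant. Summing over $n$ concludes. An alternative for large $|n|$ is a direct energy argument: outside $(-b,b)$ the decay rate exceeds $n^2\tan^2b'$, so high modes are concentrated in the control region. However, the Carleman route handles all $n$ uniformly and is the one I would commit to.
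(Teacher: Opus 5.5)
Your overall architecture is the paper's. You reduce by duality and Fourier decomposition to a uniform family of one-dimensional observability inequalities, and take the zero mode from earlier work. You then use a Carleman estimate whose parameter $s$ is independent of $n$ because the weight increases away from the equator, which makes the potential's cross term $s\,n^2\tan x\sec^2x\,\varphi'|z|^2$ nonnegative. That is exactly the paper's key observation $\beta'(x)\sin x\ge 0$. Finally, dissipation in time gives observability at $T$.

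The gap is in the one concrete choice you make for the step you call delicate: requiring $\varphi'(\pm\pi/2)=0$. With your time weight $\theta=1/(t(T-t))$, the conjugation produces the negative term $-\tfrac{s}{2}\theta''\varphi|z|^2$, and $\theta''=2\theta^2+2(T-2t)^2\theta^3\simeq 2T^2\theta^3$ as $t\to0^+$ or $t\to T^-$. Near the poles nothing is observed, so this term must be absorbed there. The only positive contribution of order $\theta^3$ is $-2s^3\theta^3(\varphi')^2\varphi''|z|^2$, and it vanishes at $\pm\pi/2$ if $\varphi'$ does. Every other positive term available near the poles is only $O(s\theta)$: the gradient term $-2s\theta\varphi''|\partial_x z|^2$, the $n^2$ cross term, and any Hardy-type term. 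None of these can dominate $sT^2\theta^3\varphi|z|^2$ for $t$ close to $0$ or $T$. Flattening the weight therefore places critical points of the weight at the poles, outside the observation region, which contradicts your own requirement that all critical points lie in $(-b,b)$. Weights that degenerate at a singular endpoint do occur in the degenerate-parabolic literature, but typically together with a stronger time weight such as $(t(T-t))^{-4}$, for which $|\theta''|\lesssim\theta^{3/2}$, plus a Hardy-type interpolation. Your plan contains neither.

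The paper avoids this as follows. After the unitary change $w=\sqrt{\cos x}\,v$, the operator becomes $w''-q_n w$ with $q_n=(n^2-\tfrac14)\tan^2x-\tfrac12$. The paper then takes $\beta(x)=\ln|\sin x|+2|x|+2$ on $\{|x|\ge b'\}$, so $|\beta'|=\cot|x|+2\ge 2$ all the way to the poles and $\beta''=-1/\sin^2x<0$. Hence the $s^3$ term is at least $8s^3\theta^3|z|^2$ on $\{|x|\ge b'\}$, and it absorbs the $\theta''$ and $\theta\theta'$ terms once $s\ge C_0(T+T^2)$. The boundary terms at $\pm\pi/2$ are handled not by flattening the weight but through the behaviour of elements of $D(\mathrm M_n)$ at the poles ($w,w'=o(1)$) together with the computations of the earlier paper.

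A smaller point: your ``uniform'' Hardy inequality in $r=\pi/2-|x|$ with weight $r$ is the critical case and is false; constants violate it, so it fails for $n=0$. For $n\ge1$ you do not need it, since $\sec^2x=1+\tan^2x$ gives the stated inequality with constant $1$ directly from the potential term.
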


\subsection{Strategy for the proofs}\label{ss::trick}
It remains to prove Theorem~\ref{thm::intrinsic SBG non-degenerate} and the upper-bound part of Theorem~\ref{thm::main-spherical coordinates degenerate}. The lower-bound part follows from~\cite[Section~5.1]{tamekue2022null}.

\subsubsection{Proof strategy for Theorem~\ref{thm::main-spherical coordinates degenerate}}

The proof of the upper bound is based on Fourier decomposition in the angular variable. Writing
\begin{equation}
	f(t,x,y)=\sum_{n\in\Z}f_n(t,x)e^{iny},
\end{equation}
reduces \eqref{eq::spherical parabolic BG equation} to a family of one-dimensional parabolic control systems. Each equation is uniformly parabolic in the open interval $(-\pi/2,\pi/2)$ but carries singular boundary behavior at the poles.

We first treat the boundary-touching strip $(a,\pi/2)\times[0,2\pi)$. For this geometry, we use a moment method, in the spirit of Fattorini and Russell~\cite{fattorini1971}. In contrast with approaches (see, e.g.,~\cite{khodja2014minimal}) based on controls of separated form $u(t,x,y)=v(t)p(x,y)$, where $p$ is a fixed spatial profile supported in $\omega$, we use mode-dependent controls of the form
\begin{equation}
	u(t,x,y)=\sum_{n\in\Z}u_n(t,x)e^{iny},
	\qquad
	u_n(t,x)=\sum_{\ell\ge |n|}\alpha_{\ell,n}\,q_\ell^n(t)v_{\ell,n}(x)\mathbf 1_{\omega_{a,\pi/2}}(x).
\end{equation}
Here $(q_\ell^n)_\ell$ is biorthogonal in $L^2(0,T)$ to the family $(e^{-(T-t)\lambda_{\ell,n}})_\ell$, and the coefficients $\alpha_{\ell,n}$ are chosen to cancel each spectral component at time $T$. This type of distributed control was used in~\cite[Section~5]{allonsius2018} for parabolic equations associated with Sturm--Liouville operators, in~\cite{lagnese1983} for the exact controllability of the one-dimensional wave equation, and in~\cite{allonsius2021analysis} for the minimal time of the two-dimensional parabolic Baouendi--Grushin equation with a one-sided vertical control strip. A related construction appears in~\cite{benabdallah2020} for abstract uniformly parabolic systems.

The passage from the boundary-touching strip $(a,\pi/2)\times[0,2\pi)$ to a general strip $(a,b)\times[0,2\pi)$, with $0\le a<b<\pi/2$, is then obtained by a cut-off argument on the full domain. More precisely, we combine the moment-method controllability on $(a,\pi/2)\times[0,2\pi)$ with the arbitrary-time controllability from the symmetric strip $(-b,b)\times[0,2\pi)$, which contains the $y-$axis in its interior. A one-dimensional cut-off in the latitude variable localizes both controls inside $(a,b)\times[0,2\pi)$ while the commutator terms remain supported in the same strip.

\subsubsection{Proof strategy for Theorem~\ref{thm::intrinsic SBG non-degenerate}}
When the control set contains the equator (i.e., $\{x=0\}$ in latitude-longitude coordinates), we use the standard duality between null controllability and observability for the adjoint system; see, for instance, \cite{dolecki1977,lions1988}. The adjoint equation is
\begin{equation}\label{eq::homogeneous spherical parabolic BG equation}
	\begin{cases}
		\displaystyle\partial_tg-\Delta_{\operatorname{BG}}g =0,&\mbox{ in }(0,T)\times\Omega,\cr
		\displaystyle g|_{t=0} = g_{0},&\mbox{ in }\Omega,
	\end{cases}
\end{equation}
together with the boundary conditions induced by the latitude-longitude coordinates; see~\cite[Lemma~2.3]{tamekue2022null}.
\begin{definition}[Observability inequality]
	System~\eqref{eq::homogeneous spherical parabolic BG equation} is \textit{observable} in $\omega\subset\Omega$ in time $T>0$ if there exists $C=C(T,\omega)>0$ such that, for every $g_0\in L^2(\Omega;\sigma)$, the corresponding solution satisfies
\begin{equation}\label{eq::obervability inequality for spherical PBG equation}
	\int_\Omega |g(T,x,y)|^2\,d\sigma(x,y)
	\le C\int_0^T\int_\omega |g(t,x,y)|^2\,d\sigma(x,y)\,dt.
\end{equation}
\end{definition}

Thus, Theorem~\ref{thm::intrinsic SBG non-degenerate} is equivalent to the following observability statement.
\begin{theorem}\label{thm::main-spherical coordinates observability non-degenerate}
	Let $\Gamma$ be defined by~\eqref{eq::degenerate control set in spherical coordinates}. Then the adjoint system \eqref{eq::homogeneous spherical parabolic BG equation} is observable in $\Gamma$ for every $T>0$.
\end{theorem}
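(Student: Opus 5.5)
We will prove Theorem~\ref{thm::main-spherical coordinates observability non-degenerate} by Fourier decomposition in $y$ and uniform one-dimensional Carleman estimates. Writing $g(t,x,y)=\sum_{n\in\Z}g_n(t,x)e^{iny}$, Parseval in $y$ and the product structure of $\Gamma$ and of $d\sigma=\cos x\,dx\,dy$ reduce \eqref{eq::obervability inequality for spherical PBG equation} to the family of one-dimensional estimates
\[
\int_{-\pi/2}^{\pi/2}|g_n(T,x)|^2\cos x\,dx\le C\int_0^T\int_{-b}^{b}|g_n(t,x)|^2\cos x\,dx\,dt,
\qquad \partial_t g_n-\cL_n g_n=0,
\]
with a constant $C=C(T,b)$ \emph{independent of $n$}. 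Summing over $n$ then gives the theorem. The mode $n=0$ is the Legendre heat equation, which is uniformly parabolic in the interior and only singular at $x=\pm\pi/2$. By symmetry in $n\mapsto -n$ it suffices to take $n\ge 0$.

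For each $n$ we will prove a Carleman estimate for $\partial_t-\cL_n$ on $(0,T)\times(-\pi/2,\pi/2)$, with observation in an interval $(a_1,b_1)\Subset(-b,b)$. We use the standard weight $\theta(t)=1/(t(T-t))$ and a spatial weight $\beta(x)$. We choose $\beta$ to have its critical point inside the observation region (or two pieces glued there, as in Fursikov--Imanuvilov). It should be monotone on $(b_1,\pi/2)$ and on $(-\pi/2,a_1)$, built e.g. from $\cos x$ so that $\beta'\cos x$ behaves well at the poles. We conjugate $w=e^{-s\theta\beta}g_n$ and split the conjugated operator into symmetric and antisymmetric parts. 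The potential term $-n^2\tan^2x$ enters only through the cross term, roughly $s\theta\,n^2\,\partial_x(\beta'\tan^2 x\cos x)|w|^2$. On $(b_1,\pi/2)$ we will arrange this to have a favorable sign: we choose $\beta$ decreasing in $|x|$ away from the equator with $-\beta'$ comparable to $\sin x$, so that the resulting term is nonnegative, or at least dominated. A nonnegative term can be discarded, which yields uniformity in $n$. This is exactly where containing the equator matters: near $x=0$ the sign of $\beta'\tan x$ cannot be controlled, but that zone lies inside the observation set. The remaining lower-order terms are then uniform in $n$.

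The main obstacle is the boundary behavior at $x=\pm\pi/2$. The coefficient $\tan^2x$ blows up there, and integrations by parts produce boundary terms and singular weights such as $|w|^2/\cos^2x$. We will handle these in the natural domain of $\cL_n$ from~\cite[Lemma~2.3]{tamekue2022null}, where $\cos x\,\partial_x g_n\to0$ at the poles for smooth solutions, so that boundary terms vanish. The singular bulk terms will be absorbed with a Hardy--Poincaré inequality of the type
\[
\int_{-\pi/2}^{\pi/2}\frac{|w|^2}{\cos x}\,dx\le C\int_{-\pi/2}^{\pi/2}|\partial_x w|^2\cos x\,dx+C\int|w|^2\cos x\,dx,
\]
together, for $n\ge1$, with the positive term $n^2\tan^2x\,|w|^2\cos x$, which dominates near the poles. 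If the sign argument for the potential fails near the poles, we will instead rely on this Hardy absorption with $s$ large, independent of $n$.

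Finally, the Carleman estimate gives $\int_{T/4}^{3T/4}\!\int |g_n|^2\cos x\le C\int_0^T\!\int_{-b}^{b}|g_n|^2\cos x$. Since $\cL_n\le0$ is self-adjoint, the energy $t\mapsto\|g_n(t)\|$ is nonincreasing. This yields the observability inequality with a constant uniform in $n$, and the theorem follows by Parseval.
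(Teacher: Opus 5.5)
Your architecture (Fourier reduction, a one-dimensional Carleman estimate with $s$ independent of $n$, energy monotonicity, Parseval) matches the paper's. Working in $L^2(\cos x\,dx)$ instead of conjugating by $\operatorname{U}v=\sqrt{\cos x}\,v$ is cosmetic.

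The gap is in the one step that carries the content, uniformity in $n$: your spatial weight is oriented the wrong way. Take $\varphi=s\theta\beta$ and $w=e^{-\varphi}g_n$. The symmetric part is $-\frac{1}{\cos x}(\cos x\,w')'+n^2\tan^2x\,w+(\varphi_t-\varphi_x^2)w$, and the antisymmetric part is $\partial_tw-2\varphi_x\partial_xw-\frac{(\varphi_x\cos x)_x}{\cos x}w$, both in $L^2(\cos x\,dx)$. The potential then enters the cross product only through
\[
n^2\int_0^T\!\!\int_{-\pi/2}^{\pi/2}(\tan^2x)'\,\varphi_x\,|w|^2\cos x\,dx\,dt
=2n^2s\int_0^T\!\!\int_{-\pi/2}^{\pi/2}\theta\,\frac{\beta'(x)\sin x}{\cos^3x}\,|w|^2\cos x\,dx\,dt ,
\]
which is the counterpart of the last term in Lemma~\ref{lem::uniform Carleman scalar product}. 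The pairing of $n^2\tan^2x\,w$ with $\partial_tw$ integrates to zero in time, so there is no positive ``$n^2\tan^2x\,|w|^2$'' term to lean on.

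This term is nonnegative iff $\beta'\sin x\ge0$, i.e.\ $\beta$ nondecreasing in $|x|$. You take the opposite, $-\beta'\sim\sin x$. That is the orientation of the Fursikov--Imanuvilov auxiliary function $\psi$, not of the exponent $e^{2\lambda\|\psi\|_\infty}-e^{\lambda\psi}$ that multiplies $s\theta$. With your choice the term becomes $-2n^2s\theta\sin^2x\cos^{-3}x\,|w|^2$ against $\cos x\,dx$: negative, of size $n^2$, and singular at the poles. Your own rough expression $\partial_x(\beta'\tan^2x\cos x)$ is also negative for $\beta'=-\sin x$.

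Every other term in the identity is independent of $n$. Absorbing this one would therefore force $s$ to grow with $n$, which is exactly the mechanism behind the positive minimal time when the control avoids the equator. Your fallback does not repair this. No absorption with $s$ independent of $n$ can beat a coefficient $n^2$. Moreover, the inequality you state, $\int|w|^2/\cos x\,dx\le C\int(|\partial_xw|^2+|w|^2)\cos x\,dx$, is false: take $w\equiv1$. Near a pole it is the critical two-dimensional Hardy inequality. After the conjugation $\sqrt{\cos x}\,v$, one has instead the genuine one-dimensional Hardy inequality $\int|w|^2\cos^{-2}x\,dx\lesssim\int|w'|^2dx$ for $w$ vanishing at $\pm\pi/2$.

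Your explanation of the role of the equator is also off. The sign of $\beta'\sin x$ cannot be left uncontrolled near $x=0$. A wrong-sign term of size $n^2$ inside $\Gamma_b$ is not bounded by $\int_{\Gamma_b}s^3\theta^3|w|^2$ uniformly in $n$ either.

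The fix is the weight of Lemma~\ref{lem::uniform carleman weight beta}: $\beta\equiv1$ on $[-b'/2,b'/2]$, $\beta'\operatorname{sign}(x)\ge0$ for $b'/2\le|x|\le b'$, and $\beta=\ln|\sin x|+2|x|+2$ for $|x|\ge b'$. The latter piece is concave, increasing in $|x|$, and has bounded derivative at $\pm\pi/2$. Then $\beta'\sin x\ge0$ on the whole interval and the $n^2$ term is simply discarded. The non-concave transition zone, where $\operatorname{Q_{con}}$ has no sign but also no $n$-dependence, lies inside $\Gamma_b$.

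Containing the equator is precisely what makes $\beta'\sin x\ge0$ achievable everywhere. Off the observation set $\beta$ must be concave with $\beta'\neq0$. Requiring $\beta'\sin x\ge0$ on an interval containing $0$ forces a minimum of $\beta$ at $x=0$, which is a critical point and so must lie in the observation region.
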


The proof proceeds by expanding $g$ in Fourier series with respect to $y$. The resulting family of one-dimensional adjoint equations is handled by Carleman estimates whose constants are uniform with respect to the Fourier index. Compared with~\cite{beauchard2014}, one must also treat the singularities at $x=\pm\pi/2$; this is achieved using the Hardy--Poincar\'e inequality of~\cite[Lemma~2.6]{tamekue2022null} together with suitable unitary transformations. The uniform one-dimensional observability inequalities are then summed over the Fourier modes by Bessel--Parseval's identity, yielding \eqref{eq::obervability inequality for spherical PBG equation}.

\subsection{Comments and open questions}\label{ss::comments}

The present results concern spherical crowns, for which the Fourier decomposition in the longitude variable is compatible with the geometry of the control set. More general geometries are substantially more delicate. For the flat two-dimensional Baouendi--Grushin equation and related generalized Grushin models, several works have already shown that the geometry of the control region may drastically affect null controllability; see, for instance, \cite{darde2023null,duprez2020,koenig2017,vanlaere2025non}. In particular, for certain control sets avoiding too much of the singular set, null controllability may fail in every positive time~\cite{koenig2017,vanlaere2025non}.

A natural continuation of the present work is therefore to study
\eqref{eq::intrinsic parabolic BG equation} for control regions on $\S^2$ that are not invariant under rotations in the longitude variable. For example, one may consider regions bounded by two meridians, as suggested in Figure~\ref{fig::Grushin sphere-control domain}. Such geometries no longer diagonalize the control problem mode by mode, and therefore require techniques beyond the moment construction used here. In this direction, the recent multidimensional work of~\cite{darde2026critical} suggests that a promising route may be to combine sharp observability estimates on structured observation regions with a Lebeau--Robbiano localization argument.
 
Another direction concerns compact almost-Riemannian surfaces. For the heat equation generated by the standard sub-Laplacian associated with the singular almost-Riemannian volume, null controllability fails when the control is contained in one connected component of $M\setminus\cZ$, where $\cZ$ is the singular set; see~\cite{boscain2013}. By contrast, the operator considered in the present paper is defined using a smooth volume form $\mu$, i.e., 
\[
\mathcal L=\operatorname{div}_\mu\circ\nabla_{\mathrm{sR}},
\] 
and therefore differs from the operator in~\cite{boscain2013}, where the divergence is taken with respect to the almost-Riemannian area, which blows up on $\cZ$.

Recent progress indicates that the geometric obstruction created by the singular set is robust beyond the spherical setting considered here. On the one hand, \cite{vanlaere2025non} proves lower bounds for the minimal time on certain complete almost-Riemannian manifolds under local Grushin-type assumptions. On the other hand, the recent paper of~\cite{vanlaere2026observability} shows, in singular Grushin settings, that the minimal time may depend not only on the observation set but also on the strength of the singularity, and even on the underlying measure in the almost Riemannian interpretation. It would therefore be very interesting to determine whether sharp upper bounds can be obtained for Baouendi--Grushin type heat equations on more general two-dimensional almost-Riemannian manifolds when the operator is defined with respect to a smooth reference measure.
 
 More generally, the recent results~\cite{darde2026critical,vanlaere2025non,vanlaere2026observability} suggest two complementary questions in the spherical setting: first, whether the minimal time remains governed by the same Agmon-type quantity for more general control geometries; second, whether one can recover matching upper bounds by methods that no longer rely on Fourier diagonalization, such as Carleman-based localization or Lebeau--Robbiano strategies.

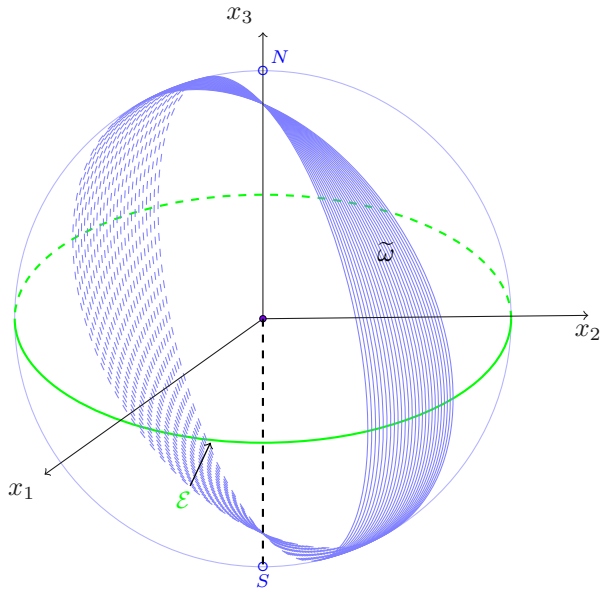
\begin{figure}
	\centering
	\def\r{2.2}
       \def\e{80}
	\tdplotsetmaincoords{60}{125}
	\pgfmathsetmacro{\rvec}{.8}
	\pgfmathsetmacro{\thetavec}{30}
	\pgfmathsetmacro{\phivec}{60}
	\definecolor{qqffff}{rgb}{0,1,1}
	\definecolor{qqqqff}{rgb}{0,0,1}
	\definecolor{ffqqqq}{rgb}{1,0,0}
	\definecolor{xfqqff}{rgb}{0.4980392156862745,0,1}
	\definecolor{ffqqff}{rgb}{1,0,1}
		\vspace{-0.4cm}
	\begin{tikzpicture}[tdplot_main_coords]
		\draw[tdplot_screen_coords,thin,blue!30] (0,0,0) circle (1.5*\r);
		\tdplotCsDrawLatCircle[green,thick]{1.5*\r}{0}
		 \foreach \a in {150,151,...,175}
		 {\tdplotCsDrawLonCircle[thin,blue!50]{1.5*\r}{\a}}
		\begin{scope}[thin,black!90]
			\draw[->] (0,0,0) -- (2.3*\r,0,0) node[anchor=north east] {$x_{1}$};
			\draw[->] (0,0,0) -- (0,2.4*\r,1.8) node[anchor=north] {$x_{2}$};
			\draw[->] (0,0,0) -- (0,0,2*\r) node[anchor=south east] {$x_{3}$};
		\end{scope}
		\draw (0,2,1.4) node[above]{$\widetilde{\omega}$};
		\draw [->,line width=0.5pt] (4.2,0.8*\r,0) -- (3.1,0.6*\r,0);
		\draw (4.2,0.75*\r,0) node [below,scale=0.9,green] {$\cE$} ;
		\begin{scriptsize}
			\draw [fill=xfqqff] (0,0,0)  circle (1.2pt);
			\draw [fill=red] (0,0,1.73*\r)  node[blue,scale=1.2] {$\circ$};
			\draw [fill=red] (0,0,-1.73*\r)  node[blue,scale=1.2] {$\circ$};
			\draw (0,0,1.73*\r)node[blue, above right]{$N$};
			\draw[dashed] (0,0,-1.73*\r) node[blue, below]{$S$};
		\end{scriptsize}
		\draw [line width=0.8pt,dashed] (0,0,0)-- (0,0,-1.73*\r) ;
	\end{tikzpicture} 
	\caption{The control region $\widetilde{\omega}$ (in \textit{blue}), the equator $\cE$ (in \textit{green}), and the north and south poles (in \textit{blue}).} \label{fig::Grushin sphere-control domain} 
\end{figure}

\subsection{Structure of the paper}\label{ss::structure}

The rest of the paper is organized as follows. Section~\ref{s::WP-FE-UNC} recalls the well-posedness framework, the Fourier decomposition, and the reduction of the controllability problem to uniform observability and controllability estimates for a family of one-dimensional parabolic equations.

Section~\ref{s:equator inside the control set} proves Theorem~\ref{thm::main-spherical coordinates observability non-degenerate}, and hence Theorem~\ref{thm::intrinsic SBG non-degenerate}, when the control set contains the equator. The key point is a uniform Carleman estimate for the one-dimensional adjoint equations, together with a Hardy--Poincaré inequality that handles the singularities generated by the spherical coordinates at the poles.

Section~\ref{s::minimal time} proves the upper-bound part of Theorem~\ref{thm::main-spherical coordinates degenerate}. Section~\ref{ss::useful estimates} establishes the weighted lower bounds for associated Legendre functions needed for the moment construction on the boundary-touching strip $(a,\pi/2)\times[0,2\pi)$, Section~\ref{ss::MM} applies the moment method to obtain controllability in every time $T>\ln(1/\cos a)$ for that geometry, and Section~\ref{ss:cut-off argument} extends the result to a general strip $(a,b)\times[0,2\pi)$ by a cut-off argument on the full domain.

\section{Well-posedness, Fourier expansion, uniform null 
controllability and uniform observability}\label{s::WP-FE-UNC}

Although the latitude-longitude coordinate representation of the control system~\eqref{eq::spherical parabolic BG equation} is naturally posed on the real Hilbert space $\operatorname{H}_\sigma:=L^2(\Omega;\sigma)$, it is convenient in what follows to work on its complexification, still denoted by $\operatorname{H}_\sigma$, in order to use the complex Fourier basis $(e^{iny})_{n\in\Z}$ in the angular variable $y$.

\subsection{Well-posedness}\label{ss::WP}

The well-posedness of \eqref{eq::intrinsic parabolic BG equation} and~\eqref{eq::spherical parabolic BG equation} was established
in~\cite[Section~2]{tamekue2022null}. For completeness, we recall the
coordinate formulation needed below, starting with the spectral
description of the spherical Baouendi--Grushin operator.

\begin{proposition}\label{pro::self-adjoiness of spherical Baoeundi-Grushin operator}
The operator $(-\Delta_{\operatorname{BG}},D(\Delta_{\operatorname{BG}}))$
is a nonnegative, densely-defined, hypoelliptic, symmetric, self-adjoint
operator on $L^2(\Omega;\sigma)$ and has a compact resolvent. Moreover,
\begin{equation}\label{eq::eigenvalues and eigenfunctions of the sub-Laplacian}
-\Delta_{\operatorname{BG}}W_{\ell,n}
=
\lambda_{\ell,n}W_{\ell,n},
\qquad
\lambda_{\ell,n}:=\ell(\ell+1)-n^2,
\qquad
\forall\, |n|\le \ell\in\N,
\end{equation}
where $\{\lambda_{\ell,n}\}_{\ell\in\N,\,-\ell\le n\le\ell}\subset\R_+$ are
the eigenvalues of $-\Delta_{\operatorname{BG}}$, and
$\{W_{\ell,n}\}_{\ell\in\N,\,-\ell\le n\le\ell}\subset D(\Delta_{\operatorname{BG}})$
the corresponding eigenfunctions are spherical harmonics defined for every
$(x,y)\in[-\pi/2,\pi/2]\times[0,2\pi)$ by
\begin{equation}\label{eq::spherical harmonics}
W_{\ell,n}(x,y)=v_{\ell,n}(x)e^{iny},
\qquad
v_{\ell,n}(x):=
\sqrt{\frac{2\ell+1}{4\pi}\frac{(\ell-n)!}{(\ell+n)!}}\,
P_\ell^n(\sin x).
\end{equation}
Here $P_\ell^n$ denotes the associated Legendre function of the first kind.
\end{proposition}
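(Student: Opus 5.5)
The natural route is to transport everything to the sphere. Through the chart $\Phi$, the spherical Baouendi--Grushin operator $\Delta_{\operatorname{BG}}$ is the pullback of $\mathcal L=-X_1^{+}X_1-X_2^{+}X_2$. Since $\Phi$ pulls back $d\mu$ to $d\sigma$ by \eqref{eq:measure on Omega}, $L^2(\Omega;\sigma)$ is unitarily equivalent to $L^2(\mathbb S^2;\mu)$. The domain $D(\Delta_{\operatorname{BG}})$ is defined as the pullback of the domain of the Friedrichs extension of $\mathcal L$. Concretely, start from the closed nonnegative quadratic form
\[
Q(f)=\int_{\mathbb S^2}\big(|X_1f|^2+|X_2f|^2\big)\,d\mu
\]
on $C^\infty(\mathbb S^2)$, and take the form domain $H^1_{X}(\mathbb S^2)$ to be its closure. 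Integration by parts on the closed manifold $\mathbb S^2$, with the $X_i$ Killing and hence divergence-free for $\mu$, gives $\langle -\mathcal L f,f\rangle=Q(f)$. So $-\mathcal L$ is symmetric and nonnegative on smooth functions, and its Friedrichs extension is self-adjoint and nonnegative. Density is immediate because $C^\infty$ functions lie in the domain. Hypoellipticity follows from H\"ormander's theorem~\cite{hormander1967}: $[X_1,X_2]=\pm X_3$, the rotation about the $x_3$-axis, so $X_1,X_2,X_3$ span $T_p\mathbb S^2$ at every point.

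For compactness of the resolvent, I would use the subelliptic estimate that accompanies H\"ormander's bracket condition at step $2$:
\[
\|f\|_{H^{1/2}(\mathbb S^2)}\le C\big(Q(f)+\|f\|_{L^2}^2\big)^{1/2}.
\]
Combined with Rellich's compact embedding $H^{1/2}\hookrightarrow L^2$ on a compact manifold, this shows the form domain embeds compactly in $L^2$, hence the resolvent is compact. A more elementary route avoids subelliptic theory. Write $\Delta_{\mathbb S^2}=-(X_1^{+}X_1+X_2^{+}X_2+X_3^{+}X_3)$ and note that $X_3$ commutes with $\mathcal L$. Then $-\mathcal L=-\Delta_{\mathbb S^2}+X_3^2$, with $X_3^2=\partial_y^2$ in coordinates. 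This is consistent with \eqref{eq:intro-BG}, since $\tan^2x=\sec^2x-1$ and the round Laplacian has the $\sec^2 x\,\partial_y^2$ term. Compactness then also drops out of the explicit spectral decomposition below.

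The spectrum comes from the commuting structure just described. The spherical harmonics $Y_\ell^n=W_{\ell,n}$ form a Hilbert basis of $L^2(\mathbb S^2;\mu)$, with $-\Delta_{\mathbb S^2}W_{\ell,n}=\ell(\ell+1)W_{\ell,n}$ and $\partial_y^2W_{\ell,n}=-n^2W_{\ell,n}$. Hence
\[
-\Delta_{\operatorname{BG}}W_{\ell,n}=\big(\ell(\ell+1)-n^2\big)W_{\ell,n}.
\]
One can also check this directly: inserting $v(x)e^{iny}$ into \eqref{eq:intro-BG} and setting $s=\sin x$ reduces it to the associated Legendre equation, with $n^2\tan^2x=n^2/(1-s^2)-n^2$. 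The normalization constant is the standard one making $\int_{-\pi/2}^{\pi/2}|v_{\ell,n}|^2\cos x\,dx=1/(2\pi)$, so that $\|W_{\ell,n}\|_{L^2(\Omega;\sigma)}=1$. Nonnegativity of the eigenvalues follows from $\lambda_{\ell,n}\ge \ell(\ell+1)-\ell^2=\ell\ge0$.

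The main obstacle is to verify that the Friedrichs operator has \emph{exactly} this spectrum, that is, that the smooth eigenfunctions $W_{\ell,n}$ exhaust it. I would argue as follows. The operator $A$ defined as multiplication by $\lambda_{\ell,n}$ in the basis $(W_{\ell,n})$ is self-adjoint, and it agrees with $-\mathcal L$ on finite linear combinations of harmonics. These combinations are dense in the form domain, because the form domain contains $C^\infty$ and smooth functions have rapidly decaying harmonic coefficients. Two self-adjoint operators whose forms coincide on a common form core are equal, so $A$ equals the Friedrichs extension. Finally, each eigenvalue has finite multiplicity, since $\ell\le\lambda$ bounds $\ell$ and $|n|\le \ell$ leaves finitely many pairs. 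Together with $\lambda_{\ell,n}\ge\ell\to\infty$, this gives the compact resolvent directly.
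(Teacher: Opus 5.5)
Your proposal is correct and follows the standard route the paper relies on; the paper does not reprove the proposition here but recalls it from \cite{tamekue2022null}. That route is H\"ormander's bracket condition for hypoellipticity, the integration-by-parts form on the closed sphere for symmetry and nonnegativity, and the identity $-\Delta_{\operatorname{BG}}=-\Delta_{\mathbb S^2}+\partial_y^2$ together with completeness of the spherical harmonics for the spectrum and the compact resolvent. One precision: $Q$ on $C^\infty(\mathbb S^2)$ is closable rather than closed; moreover, your diagonal computation already shows that the adjoint of $-\mathcal L|_{C^\infty(\mathbb S^2)}$ is the diagonal operator $A$, so $-\mathcal L$ is essentially self-adjoint and identifying $D(\Delta_{\operatorname{BG}})$ with the Friedrichs domain loses no generality.
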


Let $\operatorname{H}_\sigma:=L^2(\Omega;\sigma)$, and denote its inner
product and norm by
$\langle\cdot,\cdot\rangle_{\operatorname{H}_\sigma}$ and
$\|\cdot\|_{\operatorname{H}_\sigma}$. By
Proposition~\ref{pro::self-adjoiness of spherical Baoeundi-Grushin operator},
$(\Delta_{\operatorname{BG}},D(\Delta_{\operatorname{BG}}))$ generates a
strongly continuous contraction semigroup
$(e^{t\Delta_{\operatorname{BG}}})_{t\ge0}$ on $\operatorname{H}_\sigma$.
For $f_0\in \operatorname{H}_\sigma$, $e^{t\Delta_{\operatorname{BG}}}f_0$
is the value at time $t$ of the homogeneous solution of
\eqref{eq::homogeneous spherical parabolic BG equation}; for $t>0$, it is
smooth in the interior and admits the expansion
\begin{equation}\label{eq::spherical Grushin semigroup}
e^{t\Delta_{\operatorname{BG}}}f_0
=
\sum_{|n|\le \ell\in\N}
e^{-t\lambda_{\ell,n}}
\langle f_0,W_{\ell,n}\rangle_{\operatorname{H}_\sigma}
W_{\ell,n}.
\end{equation}

We next introduce the form domain associated with
$-\Delta_{\operatorname{BG}}$,
\begin{equation}\label{eq::energy-space}
\mathcal V_\sigma
:=
D\!\big((-\Delta_{\operatorname{BG}})^{1/2}\big),
\end{equation}
endowed with the norm
\begin{equation}\label{eq:graph norm of the form}
    \|f\|_{\mathcal V_\sigma}^2
:=
\|f\|_{\operatorname{H}_\sigma}^2
+
\big\|(-\Delta_{\operatorname{BG}})^{1/2}f\big\|_{\operatorname{H}_\sigma}^2.
\end{equation}

By the spectral theorem, since $-\Delta_{\operatorname{BG}}$ is a nonnegative
self-adjoint operator with compact resolvent, its form domain
$\mathcal V_\sigma$ is characterized by
\[
f\in\mathcal V_\sigma
\quad\Longleftrightarrow\quad
\sum_{|n|\le \ell\in\N}\lambda_{\ell,n}
\big|\langle f,W_{\ell,n}\rangle_{\operatorname H_\sigma}\big|^2<\infty,
\]
and the graph norm~\eqref{eq:graph norm of the form} is equivalently given by
\begin{equation}\label{eq::energy-space-spectral}
\|f\|_{\mathcal V_\sigma}^2
=
\sum_{|n|\le \ell\in\N}(1+\lambda_{\ell,n})
\big|\langle f,W_{\ell,n}\rangle_{\operatorname{H}_\sigma}\big|^2,
\qquad f\in \mathcal V_\sigma.
\end{equation}
See, for instance, Davies~\cite[Chapter~4, Sections~4.3--4.4]{davies1995}. Moreover, for every smooth function $f$,
\begin{equation}\label{eq::quadratic-form-BG}
\big\|(-\Delta_{\operatorname{BG}})^{1/2}f\big\|_{\operatorname{H}_\sigma}^2
=
\int_\Omega
\Big(
|\partial_x f(x,y)|^2+\tan^2x\,|\partial_y f(x,y)|^2
\Big)\,d\sigma(x,y).
\end{equation}
In particular, the space $\mathcal V_\sigma$ is the natural energy
space associated with $-\Delta_{\operatorname{BG}}$, in the sense of the classical
variational framework of Lions~\cite[Chapitre~II]{lions1961}.

The following well-posedness result is standard; see, for instance,
\cite[Chapter~4]{pazy2012}.
\begin{proposition}\label{pro::well-posedness}
Given $T>0$, $f_0\in\operatorname{H}_\sigma$, and
$v:=\mathbf 1_\omega u\in L^2(0,T;\operatorname{H}_\sigma)$, there exists a
unique solution $f\in C([0,T];\operatorname{H}_\sigma)\cap L^2(0,T;\mathcal V_\sigma)$ of system~\eqref{eq::spherical parabolic BG equation}, satisfying
\begin{align}
f(t)
&=
e^{t\Delta_{\operatorname{BG}}}f_0
+\int_0^t e^{(t-s)\Delta_{\operatorname{BG}}}v(s)\,ds,
\qquad \forall t\in[0,T], \label{eq::solution equation in coordinates}\\
\|f(t)\|_{\operatorname{H}_\sigma}
&\le
\|f_0\|_{\operatorname{H}_\sigma}
+\int_0^t\|v(s)\|_{\operatorname{H}_\sigma}\,ds,
\qquad \forall t\in[0,T]. \label{e::estimation of solution}
\end{align}
Moreover, there exists a constant $C_T>0$ such that
\begin{equation}\label{eq::energy-estimate}
\|f\|_{C([0,T];\operatorname{H}_\sigma)}
+
\|f\|_{L^2(0,T;\mathcal V_\sigma)}
\le
C_T\Big(
\|f_0\|_{\operatorname{H}_\sigma}
+
\|v\|_{L^2(0,T;\operatorname{H}_\sigma)}
\Big).
\end{equation}
\end{proposition}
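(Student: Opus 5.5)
The statement is the standard well-posedness result, Proposition~\ref{pro::well-posedness}. I would derive it from the spectral structure in Proposition~\ref{pro::self-adjoiness of spherical Baoeundi-Grushin operator}, using the Duhamel formula as the definition of the (mild) solution.

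\emph{Existence and the first two properties.} Since $-\Delta_{\operatorname{BG}}$ is nonnegative and self-adjoint, $e^{t\Delta_{\operatorname{BG}}}$ is a strongly continuous contraction semigroup. For $v\in L^2(0,T;\operatorname{H}_\sigma)\subset L^1(0,T;\operatorname{H}_\sigma)$, I define $f$ by \eqref{eq::solution equation in coordinates}. Continuity of $f$ in $\operatorname{H}_\sigma$ follows from strong continuity of the semigroup and dominated convergence applied to the convolution term. Estimate \eqref{e::estimation of solution} is then immediate from $\|e^{t\Delta_{\operatorname{BG}}}\|\le1$.

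\emph{The $L^2(0,T;\mathcal V_\sigma)$ bound.} I would work mode by mode in the eigenbasis $W_{\ell,n}$, using the characterization \eqref{eq::energy-space-spectral}. Write $c_{\ell,n}(t)=\langle f(t),W_{\ell,n}\rangle$, $\lambda=\lambda_{\ell,n}$ and $v_{\ell,n}(t)=\langle v(t),W_{\ell,n}\rangle$. Then
\[
c_{\ell,n}(t)=e^{-\lambda t}c_{\ell,n}(0)+\int_0^t e^{-\lambda(t-s)}v_{\ell,n}(s)\,ds .
\]
\begin{itemize}
\item For the free part, $\lambda\int_0^T e^{-2\lambda t}\,dt\le \tfrac12$.
\item For the forced part, Young's inequality gives
\[
\|e^{-\lambda\cdot}*v_{\ell,n}\|_{L^2(0,T)}\le \|e^{-\lambda\cdot}\|_{L^1}\,\|v_{\ell,n}\|_{L^2}\le \lambda^{-1}\|v_{\ell,n}\|_{L^2}.
\]
Hence
\[
\lambda\,\|e^{-\lambda\cdot}*v_{\ell,n}\|_{L^2}^2\le \lambda\,\|e^{-\lambda\cdot}\|_{L^1}\,\|e^{-\lambda\cdot}\|_{L^1}\,\|v_{\ell,n}\|_{L^2}^2\le \lambda^{-1}\|v_{\ell,n}\|_{L^2}^2 .
\]
This is not quite good enough when $\lambda$ is small, so I would instead use the sharper Young bound $\|k*v\|_{L^2}^2\le \|k\|_{L^1}^2\|v\|_2^2$ with $\|k\|_{L^1}\le\min(T,\lambda^{-1})$. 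This yields $\lambda\|k*v\|_2^2\le \min(\lambda T^2,\lambda^{-1})\|v\|_2^2\le T\,\|v\|_2^2$.
\item The $\operatorname{H}_\sigma$-part of the $\mathcal V_\sigma$ norm is controlled by $T\sup_t\|f(t)\|^2$.
\end{itemize}
Summing over $(\ell,n)$ with Parseval gives \eqref{eq::energy-estimate}, with $C_T$ depending polynomially on $T$.

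\emph{Uniqueness.} I would take uniqueness within the stated class in the weak (variational) sense. The difference $w$ of two solutions solves the homogeneous problem with zero initial data. Testing against $W_{\ell,n}$ shows that each coefficient satisfies $c'=-\lambda c$ in the distributional sense with $c(0)=0$, so every coefficient vanishes and $w\equiv0$. Equivalently, one can invoke Lions' theorem in the Gelfand triple $\mathcal V_\sigma\subset\operatorname{H}_\sigma\subset\mathcal V_\sigma'$ together with the energy identity.

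The only mildly delicate step is the uniform-in-$\lambda$ bound for the forced term, including the low modes with $\lambda_{\ell,n}$ small or zero (for instance $\ell=n$ gives $\lambda=\ell$, and $\ell=n=0$ gives $\lambda=0$). The $\min(T,\lambda^{-1})$ argument above handles this uniformly in $\lambda$, so I expect no real obstacle.
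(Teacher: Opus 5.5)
Your proposal is correct. It goes further than the paper, which gives no proof and only cites Pazy (Chapter~4), with $\mathcal V_\sigma$ placed in Lions' variational framework.

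\textbf{Where you agree with the cited theory.} Your existence argument, the continuity of $f$ in $\operatorname{H}_\sigma$, and the bound \eqref{e::estimation of solution} are exactly the standard mild-solution argument from that reference.

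\textbf{Where you differ.} The genuine difference is the $L^2(0,T;\mathcal V_\sigma)$ part of \eqref{eq::energy-estimate}. The abstract route would use the energy identity $\tfrac12\tfrac{d}{dt}\|f\|_{\operatorname{H}_\sigma}^2+\|(-\Delta_{\operatorname{BG}})^{1/2}f\|_{\operatorname{H}_\sigma}^2=\operatorname{Re}\langle v,f\rangle_{\operatorname{H}_\sigma}$. That identity needs a Gelfand-triple (Lions--Magenes) justification but no spectral information. You instead diagonalize in the spherical harmonics $W_{\ell,n}$ and apply a scalar Young inequality mode by mode. This is self-contained, gives an explicit polynomial $C_T$, and relies on the compact resolvent from Proposition~\ref{pro::self-adjoiness of spherical Baoeundi-Grushin operator}. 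The abstract route works for any nonnegative self-adjoint generator. Your computation extends to that setting too, by replacing the sums with integrals against the spectral measure.

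\textbf{Small eigenvalues.} Your worry here is moot in this problem. For $\ell\ge1$ one has $\lambda_{\ell,n}=\ell(\ell+1)-n^2\ge \ell\ge 1$. The mode $\lambda_{0,0}=0$ contributes nothing to the seminorm $\lambda|c_{\ell,n}|^2$. So your first crude bound $\lambda^{-1}\|v_{\ell,n}\|_{L^2(0,T)}^2$ already suffices. The $\min(T,\lambda^{-1})$ trick is still correct, and has the advantage of not depending on a spectral gap.

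\textbf{Measurability.} To conclude $f\in L^2(0,T;\mathcal V_\sigma)$ as a Bochner space, you should add one line on strong measurability into $\mathcal V_\sigma$. This follows from Pettis' theorem, since the coefficients $c_{\ell,n}$ are continuous and $\mathcal V_\sigma$ is separable.

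\textbf{Uniqueness.} Uniqueness is tautological for mild solutions. Your coefficient argument correctly handles weak solutions as well, because $W_{\ell,n}\in D(\Delta_{\operatorname{BG}})\subset\mathcal V_\sigma$ are admissible test functions.
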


\subsection{Fourier expansion}\label{ss::FE}

Expanding in the Fourier basis $(e^{iny})_{n\in\Z}$ of $L^2((0,2\pi);dy)$ yields the orthogonal decomposition
$\operatorname{H}_\sigma=\bigoplus_{n\in\Z}^{\perp}\cH_n$, where $\cH_n\cong L^2((-\pi/2,\pi/2);\cos x\,dx)$. Consequently,
\begin{align}
e^{t\Delta_{\operatorname{BG}}} = \bigoplus_{n\in\Z}^{\perp}e^{t\cL_n},\qquad\forall t\ge 0,\label{eq::semigroup diagonalisation}\\
    D(\cL_{n}) = \left\{v\in\cH_n\mid\cL_{n}v\in\cH_n, v(\pm\pi/2)\in\C, v'(\pm\pi/2)\in\C\right\},\label{eq::domain operator L_n}\\
    \cL_{n}v = \frac{1}{\cos x}(\cos xv')'-n^{2}\tan^{2}xv,\qquad\quad v\in D(\cL_{n}).\label{eq::operator L_n}
\end{align}

The boundary conditions in $D(\cL_n)$ are those induced by the latitude-longitude coordinates. Moreover, for $\ell\in\N$, $n\in\Z$, and $|n|\le\ell$, the functions
\begin{equation}\label{eq::eigenfunctions of L_n}
	v_{\ell,n}(x) = \sqrt{\frac{2\ell+1}{2}\frac{(\ell-n)!}{(\ell+n)!}}P_\ell^n(\sin x),\hspace{1cm}\forall x\in [-\pi/2,\pi/2],
\end{equation}
form a complete orthonormal set of the Hilbert space $\cH_{n}$ \cite[p. 512]{courant1953}, with $P_{\ell}^{n}$ being the associated Legendre function of the first kind. Moreover, each $v_{\ell,n}$ lies in $D(\cL_{n})$ and we have $$-\cL_{n}v_{\ell,n} = \lambda_{\ell,n}v_{\ell,n},\qquad\qquad\lambda_{\ell,n} = \ell(\ell+1)-n^2.$$ 

Since the solution of adjoint system~\eqref{eq::homogeneous spherical parabolic BG equation}, $(t,x,y)\mapsto g(t,x,y)$ belongs to $C([0,T];\operatorname{H_{\sigma}})$, the function $y\mapsto g(t,x,y)$ belongs to $ L^2((0,2\pi);dy)$ for a.e. $(t,x)\in(0,T)\times(-\pi/2 ,\pi/2)$. It follows that \eqref{eq::homogeneous spherical parabolic BG equation} is equivalent to 
\begin{equation}\label{eq::one-dim parabolic equations}
	\begin{cases}
		\partial_tg_n-\cL_ng_n = 0,&\mbox{ in } (0,T)\times(-\pi/2,\pi/2),\cr
		g_n|_{t=0} = g_{0,n},&\mbox{ in } (-\pi/2,\pi/2).
	\end{cases}
\end{equation}
Here, the $n$-th Fourier component $g_n$ is given by
\begin{equation}\label{eq::fourier coefficient of g}
	g_n(t,x) = \int_{0}^{2\pi}g(t,x,y)e^{iny}dy,\qquad(t,x)\in(0,T)\times(-\pi/2,\pi/2),
\end{equation}
and $g_{0,n}$ is the n-th Fourier coefficients of the initial state $g_0\in\operatorname{H_{\sigma}}$ in system~\eqref{eq::homogeneous spherical parabolic BG equation}.

Due to \eqref{eq::semigroup diagonalisation}, one deduces that for every $n\in\Z$, the operator $(\cL_{n}, D(\cL_{n}))$ is the generator of the one-parameter strongly continuous semigroup of contraction $(e^{t\cL_{n}})_{t\ge 0}$ on $\cH_{n}$, which is defined for every $t\ge 0$ as follows: given $g_{0,n}\in\cH_n$, $e^{t\cL_n}g_{0,n}$ is the unique solution at time $t$ of the homogeneous equation of \eqref{eq::one-dim parabolic equations}, which is $C^\infty$ on $]0,+\infty[\times(-\pi/2,\pi/2)$ and given by
	\begin{equation}\label{eq::1D spherical Grushin semigroup}
		e^{t\cL_n}g_{0,n} = \sum\limits_{\ell\in\N}e^{-t\lambda_{\ell,n}}\langle g_{0,n},v_{\ell,n}\rangle_{\cH_n}v_{\ell,n}.
	\end{equation}
 
Let us expand the solution $f\in C([0,T];\operatorname{H_{\sigma}})$ of control system \eqref{eq::spherical parabolic BG equation} in terms of Fourier series as 
\begin{equation}\label{eq::coef of f}
			f(t,x,y) = \sum\limits_{n\in\Z}f_n(t,x)e^{iny},\qquad(t,x)\in(0,T)\times(-\pi/2,\pi/2),
		\end{equation}
 and the control $u\in L^2(0,T;\operatorname{H}_\sigma)$ as
\begin{equation}\label{eq::representation of u in Fourier series}
    u(t,x,y)= \sum\limits_{n\in\Z}u_n(t,x)e^{iny},\qquad(t,x)\in(0,T)\times(-\pi/2,\pi/2).
\end{equation}
Let $\omega:=\omega_{x}\times[0,2\pi)$ where $\omega_{x}\subset(-\pi/2,\pi/2)$. Then the control system \eqref{eq::spherical parabolic BG equation} is equivalent to the following family of $1$D (non-degenerate and singular at $\pm\pi/2$) control system
\begin{equation}\label{eq::one-dim control system}
	\begin{cases}
		\partial_tf_n-\cL_nf_n = u_n\mathbf 1_{\omega_x},&\mbox{ in } (0,T)\times(-\pi/2,\pi/2),\cr
		f_n|_{t=0} = f_{0,n},&\mbox{ in } (-\pi/2,\pi/2),
	\end{cases}
\end{equation}
where $f_{0,n}$ is the n-th Fourier coefficients of the initial state $f_0\in\operatorname{H_{\sigma}}$ in system~\eqref{eq::spherical parabolic BG equation}.

The following result then follows immediately, \cite[Chapter~4]{pazy2012}.
\begin{proposition}\label{pro::existence of solution of 1D parabolic equation}
	Let $T>0$. For every $n\in\Z$, the n-th Fourier component $f_n\in  C([0,T];\cH_{n})$ is the solution of \eqref{eq::one-dim control system} 
	which is represented for all $0\le t\le T$ as
	\begin{equation}
		f_n(t) = e^{t\cL_n}f_{0,n}+\int_{0}^{t}e^{(t-s)\cL_n}u_n(s)\mathbf 1_{\omega_x}ds.
	\end{equation}
\end{proposition}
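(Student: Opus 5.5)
We will obtain the Duhamel representation of each Fourier mode $f_n$ by projecting the global mild formula onto $\cH_n$, using the diagonalization of the semigroup.

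First, we identify the Fourier components of the data. Since $\operatorname{H}_\sigma=\bigoplus^{\perp}_{n\in\Z}\cH_n$ and $\omega=\omega_x\times[0,2\pi)$ is invariant in $y$, the forcing $v=u\mathbf 1_\omega$ satisfies $v(t,x,y)=\mathbf 1_{\omega_x}(x)u(t,x,y)$. Its $n$-th Fourier coefficient is therefore $u_n(t,\cdot)\mathbf 1_{\omega_x}$. Let $\Pi_n$ denote the orthogonal projection of $\operatorname{H}_\sigma$ onto $\cH_n$, that is, $f\mapsto$ the $n$-th coefficient, viewed in $L^2((-\pi/2,\pi/2);\cos x\,dx)$ up to the fixed normalization of the Fourier basis. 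Then $\Pi_n$ is bounded, with $\|\Pi_n f\|\le C\|f\|_{\operatorname{H}_\sigma}$ by Parseval. Hence $u_n\mathbf 1_{\omega_x}\in L^2(0,T;\cH_n)$ and $f_n=\Pi_n f\in C([0,T];\cH_n)$, because $f\in C([0,T];\operatorname{H}_\sigma)$ by Proposition~\ref{pro::well-posedness}.

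Second, we apply $\Pi_n$ to \eqref{eq::solution equation in coordinates}. By \eqref{eq::semigroup diagonalisation}, $\Pi_n e^{t\Delta_{\operatorname{BG}}}=e^{t\cL_n}\Pi_n$ for all $t\ge0$. This can be checked on the spectral expansion \eqref{eq::spherical Grushin semigroup}: $\Pi_n W_{\ell,m}=\delta_{nm}v_{\ell,n}$ up to normalization, and the coefficients match \eqref{eq::1D spherical Grushin semigroup}. The Bochner integral commutes with the bounded operator $\Pi_n$, so we get
\[
f_n(t)=e^{t\cL_n}f_{0,n}+\int_0^t e^{(t-s)\cL_n}\,u_n(s)\mathbf 1_{\omega_x}\,ds,\qquad 0\le t\le T.
\]

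Finally, the right-hand side is, by definition (see \cite[Chapter~4]{pazy2012}), the unique mild solution of \eqref{eq::one-dim control system}. Here $(\cL_n,D(\cL_n))$ generates a $C_0$ contraction semigroup and the forcing lies in $L^1(0,T;\cH_n)$. So $f_n$ is this solution, and it is continuous in time by the first step. Uniqueness is immediate from the formula. The only point requiring care is the commutation $\Pi_n e^{t\Delta_{\operatorname{BG}}}=e^{t\cL_n}\Pi_n$, together with keeping the normalization of the Fourier coefficient consistent with \eqref{eq::fourier coefficient of g}. This is routine once the orthogonal decomposition is in place, so we do not expect a real obstacle.
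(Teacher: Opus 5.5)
Your proposal is correct and follows the paper's route. The paper only states that the result follows immediately from the diagonalization \eqref{eq::semigroup diagonalisation} and the mild-solution theory of \cite[Chapter~4]{pazy2012}. You supply the details the paper leaves implicit: you project the global Duhamel formula onto $\cH_n$ with a bounded operator that commutes with the Bochner integral, and you identify the result as the mild solution of the one-dimensional system.
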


\begin{remark}[Decay rate]
    By using \eqref{eq::1D spherical Grushin semigroup}, we obtain that the Fourier component $g_n$ satisfies the following dissipation rate
\begin{equation}\label{eq::dissipation rate}
	\|g_n(T,\cdot)\|_{\cH_{n}}\le e^{-|n|(T-t)}\|g_n(t,\cdot)\|_{\cH_{n}},\qquad\qquad\forall t\in (0,T).
\end{equation}
\end{remark}
\begin{notation}
	In what follows, we shall assume $n\in\N$ to simplify the notation. The same considerations hold for $n\in\Z_{-}$ by replacing $n$ with $|n|$.
\end{notation}

\subsection{Uniform null controllability and uniform observability}\label{ss::UNC}

We illustrate in this section how the proof of null controllability for the control system \eqref{eq::spherical parabolic BG equation} (resp. observability of the adjoint system \eqref{eq::homogeneous spherical parabolic BG equation}) reduces to that of null controllability for the $1$D parabolic equations \eqref{eq::one-dim control system} (resp. 
 observability of the $1$D adjoint system \eqref{eq::one-dim parabolic equations}) that is uniform with respect to $n\in\N$.

The duality between null controllability and observability (see, for instance, \cite{dolecki1977} and \cite[Theorem 2.44]{coron2007}) tells us that if adjoint system \eqref{eq::homogeneous spherical parabolic BG equation} is observable from $\omega:=\omega_{x}\times[0,2\pi)$ (where $\omega_{x}\subset(-\pi/2,\pi/2)$) in time $T>0$ then the linear map
\begin{equation}\label{eq::control operator}
    \cU_T:f_0\in\operatorname{H}_\sigma\longmapsto\cU_T(f_0) = u\in L^2(0,T;\operatorname{H}_\sigma),
\end{equation}
where $u$ is the control supported in $(0, T)\times\omega$ that steers the solution of control system \eqref{eq::spherical parabolic BG equation} from the initial state $f_0$ to $0$ in time $T$ is well-defined and bounded.  

As demonstrated in \cite[Section 2.3]{tamekue2022null}, adjoint system \eqref{eq::homogeneous spherical parabolic BG equation} is observable in $\omega$ in time $T>0$ if and only if the family of $1$D adjoint system \eqref{eq::one-dim parabolic equations} is observable in $\omega_x$ in time $T>0$ uniformly with respect to $n\in\N$. This means that there exists a positive constant $C>0$ independent of $n\in\N$ such that the following uniform observability holds for system \eqref{eq::one-dim parabolic equations}
\begin{equation}\label{eq::uniform observability inequality}
	\int_{-\frac\pi 2}^{\frac\pi 2}|g_n(T,x)|^2dx\le C\int_{0}^{T}\int_{\omega_x}|g_n(t,x)|^{2}\cos xdxdt.
\end{equation}

Since the observability of $1$D adjoint system \eqref{eq::one-dim parabolic equations} is equivalent to null controllability of $1$D control system \eqref{eq::one-dim control system}, it follows that
\begin{equation}\label{eq::control operator diagonalisation}
	\cU_T = \bigoplus_{n\in\Z}^{\perp}\cU_{n,T}.
\end{equation}
Here the linear map $\cU_{n,T}$ is defined by
\begin{equation}\label{eq::control operator in 1D}
    \cU_{n,T}:f_{0,n}\in\cH_n\longmapsto\cU_{n,T}(f_{0,n}) = u_n\in L^2(0,T;\cH_n),
\end{equation}
where $u_n$ are Fourier components of the control $u$ defined in \eqref{eq::control operator} satisfying the following: there exists a positive constant $K>0$ independent of $n\in\N$ such that
\begin{equation}\label{eq::uniform null controllability estimates}
    \|u_n\|_{L^2(0,T;\cH_n)}\le K\|f_{0,n}\|_{\cH_n},\qquad\forall n\in\N.
\end{equation}
\begin{definition}[Uniform null controllability]
    Let $\omega_x\subset(-\pi/2,\pi/2)$. Control system \eqref{eq::one-dim control system} is null controllable from $\omega_x$ in time $T>0$ uniformly with respect to $n\in\N$ if there exists a control $u_n$ supported in $(0, T)\times\omega_x$ that steers the solution $f_n$ of \eqref{eq::one-dim control system} from $f_{0,n}$ to $0$ in time $T$ and satisfies \eqref{eq::uniform null controllability estimates}.
\end{definition}

Therefore, studying the null controllability (resp. observability) of control system \eqref{eq::spherical parabolic BG equation} (resp. adjoint system \eqref{eq::homogeneous spherical parabolic BG equation}) is equivalent to studying the uniform null controllability (resp. uniform observability) of a family of $1$D control systems \eqref{eq::one-dim control system} (resp. \eqref{eq::one-dim parabolic equations}).

\section{Observability when the degeneracy is inside the control set}\label{s:equator inside the control set}

This section proves Theorem~\ref{thm::main-spherical coordinates observability non-degenerate} for the control set $\Gamma=\Gamma_b\times[0,2\pi)$, where $\Gamma_b=(-b,b)$ and therefore $0\in\operatorname{int}(\Gamma_b)$. The symmetry of $\Gamma_b$ is imposed only for notational simplicity. By the reduction described in Section~\ref{ss::UNC}, it suffices to prove a uniform observability inequality for the one-dimensional adjoint family \eqref{eq::one-dim parabolic equations}. The precise statement is the following.

\begin{proposition}\label{pro::degenerate 1D observability inequality}
    Let $T>0$ and $\Gamma_b=(-b, b)$ be defined as in \eqref{eq::degenerate control set in spherical coordinates}. Then, there exists a positive constant $C>0$ such that for every $n\in\N$, the solution $g_n$ of~\eqref{eq::one-dim parabolic equations} satisfies
	\begin{equation}\label{eq::inequality n}
		\int_{-\frac\pi 2}^{\frac\pi 2}|g_n(T,x)|^2\cos xdx\le C\int_{0}^{T}\int_{\Gamma_b}|g_n(t,x)|^2\cos xdxdt.
	\end{equation}
\end{proposition}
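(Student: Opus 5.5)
We prove \eqref{eq::inequality n} by a Carleman estimate for the one-dimensional operators $\cL_n$, with constants tracked uniformly in $n$, in the spirit of~\cite{beauchard2014}.

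\textbf{Step 1: reduction to a flat operator.} We remove the weight $\cos x$ with the unitary map $w=\sqrt{\cos x}\,g_n$ from $L^2(\cos x\,dx)$ onto $L^2(dx)$. A direct computation gives
\[
\partial_t w-\partial_x^2 w+\Big(n^2\tan^2x+\tfrac14\tan^2x-\tfrac12\Big)w=0 .
\]
The potential $q_n(x)=(n^2+\tfrac14)\tan^2x-\tfrac12$ is nonnegative up to a bounded term, vanishes to second order at $x=0$ when $n\ge1$, and blows up like $(\pi/2-|x|)^{-2}$ at the poles. In the regions $|x|\ge b/2$ this potential is large, of order $n^2$.

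\textbf{Step 2: localized Carleman estimate in the elliptic regions.} We treat the two elliptic regions $(b/2,\pi/2)$ and $(-\pi/2,-b/2)$ separately. On each, we use a cut-off $\chi$ supported in $\{|x|>b/2\}$ and equal to $1$ on $\{|x|>3b/4\}$. We apply a standard parabolic Carleman weight $e^{-s\theta(t)\varphi(x)}$, with $\theta(t)=1/(t(T-t))$ and $\varphi$ chosen so that $\varphi'\ne0$ away from an observation zone inside $(b/2,b)$. The potential term $q_n|\chi w|^2$ has a good sign, so it can be kept on the left or simply dropped. This makes the estimate uniform in $n$, since $n$ then enters only through a positive term.

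The commutator terms created by $\chi$ are supported in $b/2<|x|<3b/4\subset\Gamma_b$, so they are absorbed by the observation term. The first-order boundary terms at $x=\pm\pi/2$ need care. We control them through the regularity of the domain $D(\cL_n)$: boundedness of $v$ and $v'$ at the poles, combined with the factor $\cos x$, makes them vanish for $g_n$.

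\textbf{Step 3: handling the singular potential near the poles.} Conjugation by the weight produces a cross term $s\theta\varphi'' |w|^2$ together with terms involving $q_n'$. These are not of a sign we control near the poles, because $q_n'$ behaves like $(\pi/2-x)^{-3}$. We choose $\varphi$ with $\varphi'$ vanishing to appropriate order at $\pm\pi/2$, for instance $\varphi'\propto\cos^2x$ near the poles, so that the term $\varphi' q_n'$ behaves like $(n^2+\tfrac14)\cos^{-1}$. Such a term is dominated by $q_n$ itself and by $\int|\partial_x w|^2$.

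Whatever remains, of the form $\int|w|^2/\cos^2x$, we absorb with the Hardy--Poincaré inequality~\cite[Lemma~2.6]{tamekue2022null}. We expect this to be the \emph{main obstacle}: the weight must be adapted to the pole singularity while keeping every constant independent of $n$.

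\textbf{Step 4: from Carleman to observability.} The resulting estimate has the form
\[
\int_{T/3}^{2T/3}\!\!\int_{-\pi/2}^{\pi/2}|g_n|^2\cos x\,dx\,dt\le C\int_0^T\!\!\int_{\Gamma_b}|g_n|^2\cos x\,dx\,dt .
\]
On the left, the region $|x|<3b/4$ is trivially contained in $\Gamma_b$, and Step 2 covers the rest. We then conclude with the dissipation estimate~\eqref{eq::dissipation rate}, which gives $\|g_n(T)\|\le\|g_n(t)\|$ for all $t$. Averaging over $t\in(T/3,2T/3)$ yields \eqref{eq::inequality n} with a constant $C$ independent of $n$.

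The case $n=0$ is a fixed nondegenerate problem and is covered by the same argument.
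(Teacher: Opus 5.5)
There is a genuine gap at the one point where uniformity in $n$ is decided: positivity of $q_n$ is not what makes a Carleman estimate uniform.

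\textbf{Why positivity of $q_n$ does not help.} If you put $q_nw$ on the right-hand side, you need $s^3\theta^3\gtrsim\sup|q_n|^2$. That grows with $n$, and is even infinite because of the poles. If instead you keep $q_n$ in the symmetric part $\mathcal P_n^+$, then after conjugation $z=e^{-s\theta\beta}g$ its only interior contribution to $\int_Q\mathcal P_n^+z\,\mathcal P_n^-z\,dQ$ is the cross term below; the term $\int_Q q_nz\,\partial_tz\,dQ$ vanishes after time integration.
\[
\int_Q s\theta\,\beta'q_n'\,|z|^2\,dQ=s\Big(2n^2-\frac12\Big)\int_Q\theta\,\frac{\beta'(x)\sin x}{\cos^3x}\,|z|^2\,dQ .
\]
Here the correct potential is $q_n=(n^2-\frac14)\tan^2x-\frac12$; your $+\frac14$ is a slip. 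Note also that $q_n\ge-\frac12$ holds on every interval. So your Step~2 reasoning would apply verbatim to a strip $(a,b)\times[0,2\pi)$ with $a>0$ and give observability in every time, contradicting $T_{\min}\ge\ln(1/\cos a)$.

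\textbf{The missing idea.} Contrary to Step~3, the sign of this cross term is entirely yours to choose, because $q_n'$ has the sign of $x$. The paper takes a weight $\beta$ that is constant on a neighbourhood of $0$ contained in $\Gamma_b$ and increasing in $|x|$ elsewhere, so that $\beta'(x)\sin x\ge0$ on the whole interval. The $n$-dependent term is then nonnegative and can be dropped, giving $s\ge C_0(T+T^2)$ with $C_0$ independent of $n$. This is exactly where the hypothesis $0\in\Gamma_b$ is used. Near the poles the paper uses $\beta=\ln|\sin x|+2|x|+2$, so $\beta'\ge2$ and $-\beta''=1/\sin^2x$ stay nondegenerate there.

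\textbf{Why your Step~3 remedy fails.} The identity contains no positive $\int_Qs\theta q_n|z|^2\,dQ$ that could dominate a bad-sign cross term of size $n^2s\theta|z|^2/\cos x$. The gradient term it produces, $-2s\theta\beta''|\partial_xz|^2$, has an $n$-independent coefficient. So absorbing an $O(n^2)$ term through Hardy--Poincar\'e forces $s$ to depend on $n$, which is precisely the non-uniform situation that yields observability only in large time.

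Imposing $\varphi'\sim\cos^2x$ at the poles makes things worse. The good terms $s^3\theta^3\beta'^2|\beta''|\,|z|^2$ and $s\theta|\beta''|\,|\partial_xz|^2$ then degenerate there, like $\cos^5x$ and $\cos x$. Consequently $n$-independent error terms such as $s\theta''\beta|z|^2$ are no longer absorbed near the poles. With the monotone weight no vanishing is needed, and the boundary terms at the poles are handled through the properties of $D(\operatorname{M_n})$.

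\textbf{Two smaller points.}
\begin{enumerate}
\item The commutator $2\chi'\partial_xw$ is not controlled by the $L^2$ observation term alone. It needs a Caccioppoli estimate on $\Gamma_b$, which is uniform for $n\ge1$ since $q_n\ge-\frac12$.
\item For $n=0$ the correct potential $-\frac14\tan^2x-\frac12$ is negative, with a critical inverse-square singularity at the poles. Your sign argument therefore does not cover that mode. The paper treats it separately, using the known observability of the zero frequency.
\end{enumerate}
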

\begin{remark}\label{rmk::key}
    It is already known from \cite[Proposition 3.4]{tamekue2022null} that the adjoint system \eqref{eq::one-dim parabolic equations} is observable from any subset $\Gamma\subset(-\pi/2,\pi/2)$ in an arbitrarily short time $T>0$ for the zero frequency. Therefore, it remains to prove Proposition~\ref{pro::degenerate 1D observability inequality} only for non-zero frequencies $n\in\N^{*}$.
\end{remark} 

For nonzero frequencies $n\in\N^*$, Proposition~\ref{pro::degenerate 1D observability inequality} follows from Carleman estimates whose constants are uniform with respect to $n$. As in~\cite[Section~3.2]{tamekue2022null}, we first remove the weight $\cos x\,dx$ by transferring the equation to the unweighted space $L^2(-\pi/2,\pi/2)$. Consider the unitary transformation 
\begin{eqnarray}\label{eq::unitary transformation U}
	\operatorname{U}:& L^2((-\pi/2,\pi/2);\cos xdx)&\longrightarrow  L^2(-\pi/2,\pi/2)\nonumber\\
	&v&\longmapsto (\operatorname{U}v)(x) = \sqrt{\cos x}v(x)
\end{eqnarray} 
and define for all $n\in\N^{*}$ the unbounded operator $\operatorname{M_n}$ on the space $ L^2(-\pi/2,\pi/2)$ by
\begin{equation}\label{eq::operators M_n}
	\operatorname{M_n} = \operatorname{U}\cL_{n}\operatorname{U}^{+},\qquad\qquad D(\operatorname{M_n}) = \operatorname{U}(D(\cL_{n})),
\end{equation} 
where $\operatorname{U}^{+}$ is the adjoint of the unitary operator $\operatorname{U}$. It follows that 
\begin{equation}\label{eq36}
	\operatorname{M_n}w = w''-q_{n}(x)w,\hspace{1cm}\forall w\in D(\operatorname{M_n}),
\end{equation} 
where, for all $n\in\N^{*}$, the potential $q_n$ is given by $q_n(x) = (n^2-1/4)\tan^2x-1/2$.

Since the differential operator $\partial_t$ commutes with the unitary transformation $\operatorname{U}$, one deduces that system \eqref{eq::one-dim parabolic equations} is equivalent to the following 
\begin{equation}\label{eq::transformer one-dim parabolic equation for n neq 0}
	\begin{cases}
		\partial_t\tilde{g}_n-\operatorname{M_n}\tilde{g}_n = 0,&\mbox{ in } (0,T)\times(-\pi/2,\pi/2),\cr
		\tilde{g}_n|_{t=0}  = \tilde{g}_{0,n},&\mbox{ in }(-\pi/2,\pi/2).
	\end{cases}
\end{equation}

\begin{remark}
	By standard parabolic smoothing for the homogeneous equation, the solution $\widetilde g_n$  of~\eqref{eq::transformer one-dim parabolic equation for n neq 0}
	belongs to
	$C([0,T];L^2(-\pi/2,\pi/2))\cap C^2((0,T);D(\operatorname{M}_n))$. In particular, $D(\operatorname{M_n})$ is the subset of the Sobolev space $H_0^1(-\pi/2,\pi/2)$ as shows the following result proved in \cite[Lemma 3.7]{tamekue2022null}.
\end{remark}

\begin{lemma}[\cite{tamekue2022null}]\label{lem::proprieties in D(M_n)}
	Let $n\in\N^{*}$ and $w\in D(\operatorname{M_n})$. Then $w'\in L^2(-\pi/2,\pi/2) $ and $w$ is locally absolutely continuous on $\displaystyle[-\pi/2,\pi/2]$. Moreover,
	\begin{equation}\label{eq::proprieties in D(M_n)}
	w(x)=o(1)\qquad\mbox{and}\qquad w'(x) = o(1)\qquad\mbox{both as}\quad x\to\pm\frac\pi 2.
	\end{equation}
\end{lemma}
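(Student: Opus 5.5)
The plan is to unwind the definition $D(\operatorname{M_n})=\operatorname{U}(D(\cL_n))$. Every $w\in D(\operatorname{M_n})$ can be written $w=\sqrt{\cos x}\,v$ with $v\in D(\cL_n)$. By \eqref{eq::domain operator L_n}, this means $v\in\cH_n$, $f:=\cL_n v\in\cH_n$, and $v$, $v'$ have finite limits at $\pm\pi/2$. I will read the condition $v(\pm\pi/2)\in\C$, $v'(\pm\pi/2)\in\C$ as: $v$ and $v'$ extend continuously and are bounded near the poles. Since $\cL_n$ has smooth coefficients on the open interval, interior elliptic regularity gives $v\in H^2_{\mathrm{loc}}(-\pi/2,\pi/2)$, hence $v\in C^1$ there. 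So $w$ is $C^1$ in the interior, with
\[
w'=-\frac{\sin x}{2\sqrt{\cos x}}\,v+\sqrt{\cos x}\,v'.
\]
All statements then reduce to the behaviour of $v$ near the poles. By symmetry it suffices to treat $x\to\pi/2$; I write $s=\pi/2-x$, so that $\cos x\sim s$.

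\textbf{Key step: $v(\pi/2)=0$ for $n\ge1$.} Rewrite the equation as
\[
(\cos x\,v')'=\cos x\,f+n^2\frac{\sin^2x}{\cos x}\,v.
\]
By Cauchy--Schwarz, $\int|f|\cos x\,dx\le(\int\cos x\,dx)^{1/2}\|f\|_{\cH_n}<\infty$, so $\cos x\,f$ is integrable up to $\pi/2$. Integrate from a fixed $x_0$ to $x<\pi/2$. The left side $\cos x\,v'(x)-\cos x_0\,v'(x_0)$ stays bounded because $v'$ is bounded. Hence $\int_{x_0}^{x}\frac{\sin^2t}{\cos t}\,v(t)\,dt$ stays bounded as $x\to\pi/2$. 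If $v(\pi/2)=c\neq0$, the integrand is $\sim c/s$ near the pole, which is not integrable, and the integral would diverge like $c\ln(1/s)$. This contradiction gives $v(\pi/2)=0$. This is the main point of the proof, and it is where $n\ge1$ is used; for $n=0$ the potential term vanishes and the conclusion is false.

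\textbf{Conclusion.} Since $v(\pi/2)=0$ and $v'$ is bounded, the mean value theorem gives $|v(x)|\le Cs$. Hence $w=\sqrt{\cos x}\,v=O(s^{3/2})=o(1)$. In the formula for $w'$, the first term is $O(s^{-1/2}\cdot s)=O(s^{1/2})$ and the second is $O(s^{1/2})$, so $w'(x)=o(1)$ as $x\to\pi/2$, and likewise at $-\pi/2$.

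Consequently $w'$ is continuous on $(-\pi/2,\pi/2)$ and extends continuously by $0$ to the endpoints. In particular $w'$ is bounded, so $w'\in L^2(-\pi/2,\pi/2)$. Since $w\in C^1$ in the interior with a bounded derivative that has limits at the ends, $w(x)=w(x_0)+\int_{x_0}^x w'$ holds up to the closed interval. So $w$ is (locally) absolutely continuous on $[-\pi/2,\pi/2]$, with $w(\pm\pi/2)=0$, so that $w\in H^1_0$.
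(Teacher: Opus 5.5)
Your proof is correct. The paper does not prove this lemma itself; it imports it from \cite{tamekue2022null} (Lemma~3.7 there). So there is no in-paper argument to compare with, and your proposal works as a self-contained substitute.

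The only nontrivial input is the vanishing $v(\pm\pi/2)=0$ for $n\ge 1$, and your derivation of it is sound. You integrate $(\cos x\,v')'=\cos x\,\cL_n v+n^2\frac{\sin^2 x}{\cos x}\,v$ on $(x_0,x)$, note that the left side and the first term on the right stay bounded, and invoke the non-integrability of $1/\cos x$ at the poles. An equivalent variant tests the same identity against $\bar v$ instead of $1$ on $(x_0,x)$. The boundary term $\cos x\,v'\bar v$ tends to $0$, which yields the energy identity underlying \eqref{eq::quadratic-form-BG} and hence $\int \frac{\sin^2 x}{\cos x}|v|^2\,dx<\infty$. The conclusion is the same, obtained from a monotone quantity rather than by contradiction.

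The rest is routine and correctly done: $|v(x)|\le C(\pi/2-x)$, the explicit formula for $w'$ giving $w'=O(\sqrt{\pi/2-x})$, boundedness of $w'$, and Lipschitz continuity of $w$ up to the endpoints.

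The whole argument rests on reading the conditions $v(\pm\pi/2)\in\C$, $v'(\pm\pi/2)\in\C$ in the definition of $D(\cL_n)$ as existence of finite one-sided limits of $v$ and $v'$. You state this explicitly, and it is the intended meaning. Your remark that the statement fails for $n=0$ (take $v\equiv 1$) correctly identifies where $n\ge 1$ enters.
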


We can now state the uniform Carleman estimate for \eqref{eq::transformer one-dim parabolic equation for n neq 0}. Combined with the dissipation estimate \eqref{eq::dissipation rate}, it yields Proposition~\ref{pro::degenerate 1D observability inequality} for all nonzero modes. The proof of the Carleman estimate is deferred to Section~\ref{s::UCE}. To simplify notation, we drop the tilde and the index $n$ and introduce, for each $n\in\N^*$, the singular parabolic operator
\begin{equation}\label{eq::singular 1D parabolic operator}
	\cP_n:=\partial_t- \partial_x^2+q_n(x)\qquad\mbox{with}\qquad q_n(x) = \left(n^2-\frac 1 4\right)\tan^2x-1/2.
\end{equation}
\begin{proposition}[Uniform Carleman estimate]\label{pro::uniform Carleman estimate}
	Let $\Gamma_b=(-b, b)$ be defined as in \eqref{eq::degenerate control set in spherical coordinates}. Then there exist a weight function $\beta\in C^{4}([-\pi/2,\pi/2])$ and positive constants $C_0, C_1>0$ such that for every $T>0$, $s\ge C_0(T+T^2)$ and $n\in\N^{*}$, every $\displaystyle g\in C([0,T]; L^2(-\pi/2,\pi/2))\cap C^2((0,T);D(\operatorname{M_{n}}))$ satisfies
	\begin{multline}\label{eq::uniform Carleman estimate}
		C_1\int_{0}^{T}\int_{-\frac\pi 2}^{\frac\pi 2}\left(s\theta(t)| \partial_x g(t,x)|^2+s^3\theta(t)^3|g(t,x)|^2\right)e^{-2s\varphi(t,x)}dxdt \\
		\le\int_{0}^{T}\int_{\Gamma_b}s^3\theta(t)^3|g(t,x)|^2e^{-2s\varphi(t,x)}dxdt
		+\int_{0}^{T}\int_{-\frac\pi 2}^{\frac\pi 2}|\cP_ng(t,x)|^2e^{-2s\varphi(t,x)}dxdt,
	\end{multline}
	where, $C_i:=C_i(\beta,b)$, $i=0,1$ and $\cP_n$ is defined in \eqref{eq::singular 1D parabolic operator}. Here we let
 \begin{equation}
 \theta(t) = 1/t(T-t)\quad\mbox{and}\quad \varphi(t,x) = \theta(t)\beta(x),\qquad t\in(0,T),\; x\in[-\pi/2,\pi/2].
 \end{equation}
\end{proposition}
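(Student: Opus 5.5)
We follow the standard Fursikov--Imanuvilov scheme used for the flat Baouendi--Grushin equation in \cite{beauchard2014}, with two modifications: the weight is built so that the potential $q_n$ can be absorbed uniformly in $n$, and the Hardy--Poincaré inequality of \cite[Lemma~2.6]{tamekue2022null} handles the boundary singularities.

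\textbf{Choice of the weight.} Pick $\beta\in C^4([-\pi/2,\pi/2])$ with $\beta>0$ and $\beta'(\pm\pi/2)=0$, so that no boundary terms arise at the poles. Take $|\beta'|\ge c>0$ on $[-\pi/2,\pi/2]\setminus(-b',b')$ for some $0<b'<b$, $\beta''<0$ near $\pm\pi/2$, and let $\beta$ be even with a single maximum at $0$. In particular $x\beta'(x)\le0$, so $\beta$ is increasing on $(-\pi/2,0)$ and decreasing on $(0,\pi/2)$. The choice of sign of $x\beta'(x)$ is dictated by the term coming from $q_n'$ below.

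\textbf{Conjugation.} Set $z=e^{-s\varphi}g$. Then $e^{-s\varphi}\cP_n g=P_1z+P_2z$, where
\[
P_1z=-\partial_x^2z-s^2\varphi_x^2z+q_nz
\quad\text{and}\quad
P_2z=\partial_tz+2s\varphi_x\partial_xz+s\varphi_{xx}z-s\varphi_tz
\]
are the symmetric and antisymmetric parts (up to lower-order terms). Expand $\|P_1z+P_2z\|^2\ge2\langle P_1z,P_2z\rangle$ and integrate by parts in $x$ and $t$. Boundary terms at $t=0,T$ vanish because $e^{-s\varphi}$ decays there. Boundary terms at $x=\pm\pi/2$ vanish by Lemma~\ref{lem::proprieties in D(M_n)}, since $z,\partial_xz=o(1)$ and $\beta'(\pm\pi/2)=0$. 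For the unbounded term $q_n|z|^2$ one needs $q_n|z|^2$ integrable; this follows from $g\in D(\operatorname{M_n})$ and the form domain.

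\textbf{The cross terms.} These produce the usual positive quantities
\[
-2s\int\varphi_{xx}|\partial_xz|^2
\qquad\text{and}\qquad
-2s^3\int\varphi_{xx}\varphi_x^2|z|^2,
\]
together with lower-order $\varphi_t,\varphi_{tt}$ terms, which are absorbed for $s\ge C_0(T+T^2)$ since $|\varphi_t|\lesssim T\theta^2$. The new term is the potential contribution, which after integration by parts reads
\[
\langle q_nz,2s\varphi_x\partial_xz\rangle=-s\int\varphi_xq_n'|z|^2.
\]
Since $q_n'(x)=(2n^2-\tfrac12)\tan x\sec^2x$ has the sign of $x$, and $\varphi_x=\theta\beta'$ with $x\beta'\le0$, we get $-s\varphi_xq_n'\ge0$. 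This term is therefore \emph{nonnegative}, uniformly in $n$, and can simply be dropped. The other $q_n$ cross term, $\langle q_nz,s\varphi_{xx}z\rangle$, is not signed, so it must be compared with the good terms.

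\textbf{Main obstacle.} The difficulty is controlling $s\theta\int\beta''q_n|z|^2$ with the $n$-dependent potential. Near the poles, where $\beta''<0$, this term has the favorable sign. Near the equator, $q_n$ is bounded by $Cn^2$ on compacts, so for large $n$ it cannot be absorbed by $s^3\theta^3$ terms. There I would first try to make $\beta''$ have a favorable sign wherever $q_n$ is large. The natural choice is $\beta''\le0$ everywhere except inside $\Gamma_b$; on the remaining part of $\Gamma_b$ the bad term is bounded by the observation term, using $q_n\ge-1/2$. If this fails, the fallback is to use the dissipation \eqref{eq::dissipation rate}, which gives decay $e^{-n(T-t)}$, so that large-$n$ modes are trivially observable. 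I expect the authors genuinely need the Hardy--Poincaré inequality to absorb $\int\tan^2x|z|^2$-type terms by $\int|\partial_xz|^2$ when returning from $z$ to $g$.

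**Return to $g$.** Localize with a cutoff supported in $\Gamma_b$ to convert $s\int_{\Gamma}|\partial_xz|^2$ into the observation term, then go back to $g$, which yields \eqref{eq::uniform Carleman estimate} with constants independent of $n$.
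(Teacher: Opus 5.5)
Your overall scheme (conjugate, expand the cross product, drop a signed potential term, absorb lower-order terms for $s\ge C_0(T+T^2)$, localize in $\Gamma_b$) is the paper's. However, your sign bookkeeping is wrong, and it leads you to a weight for which the estimate fails.

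\textbf{The conjugation and the potential term.} With $z=e^{-s\varphi}g$, as the weight $e^{-2s\varphi}$ in the statement requires, one has
\[
e^{-s\varphi}\cP_n g=\bigl(-\partial_x^2z+q_nz+s\varphi_tz-s^2\varphi_x^2z\bigr)+\bigl(\partial_tz-2s\varphi_x\partial_xz-s\varphi_{xx}z\bigr).
\]
Your $P_2$, with $+2s\varphi_x\partial_xz+s\varphi_{xx}z$, corresponds to $z=e^{+s\varphi}g$. You then quote the good terms $-2s\int\varphi_{xx}|\partial_xz|^2$ and $-2s^3\int\varphi_{xx}\varphi_x^2|z|^2$ from the correct convention, but the potential term from the other one.

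Your integration by parts also drops a piece: $\langle q_nz,\mp2s\varphi_x\partial_xz\rangle=\pm s\int(\varphi_xq_n'+\varphi_{xx}q_n)|z|^2$. The $\varphi_{xx}q_n$ part cancels exactly against $\langle q_nz,\mp s\varphi_{xx}z\rangle$. So your ``main obstacle'' $s\theta\int\beta''q_n|z|^2$ does not exist; compare $\operatorname{Q_{con}}$ in Lemma~\ref{lem::uniform Carleman scalar product}, which contains no $q_n$.

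The only $n$-dependent contribution is $s(2n^2-\tfrac12)\int\theta\,\beta'(x)\frac{\sin x}{\cos^3x}|z|^2$. It is nonnegative iff $\beta'(x)\sin x\ge0$, which is the opposite of your choice $x\beta'\le0$. With your weight this term is negative and of order $n^2s\theta$ wherever $|\beta'|\ge c$, and nothing with $s$ independent of $n$ can absorb it.

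\textbf{The paper's weight.} The paper takes $\beta\equiv1$ on $[-b'/2,b'/2]$, $\beta'\operatorname{sign}(x)\ge0$ on $b'/2\le|x|\le b'$, and $\beta=\ln|\sin x|+2|x|+2$ for $|x|\ge b'$. This is increasing and concave ($\beta''=-1/\sin^2x$) toward the poles. Hence on $\omega_{bdy}$ the two $\varphi_{xx}$-terms and the potential term are all positive (Lemma~\ref{lem::uniform Carleman Q bdy}). The unsigned remainder $\operatorname{Q_{con}}$ lives in $\omega_{con}\subset\subset\Gamma_b$ and is independent of $n$.

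\textbf{Your weight specification is inconsistent.} $\beta'(\pm\pi/2)=0$ contradicts $|\beta'|\ge c$ on $[-\pi/2,\pi/2]\setminus(-b',b')$. Also, $\beta''<0$ near $\pi/2$ together with $\beta'(\pi/2)=0$ forces $\beta'>0$ just left of $\pi/2$, contradicting $\beta$ decreasing on $(0,\pi/2)$. Moreover, $\beta'(\pm\pi/2)=0$ would make the coefficient $-2s^3\theta^3\beta''\beta'^2$ of $|z|^2$ vanish at the poles, where the left-hand side still requires $s^3\theta^3|z|^2$. The paper keeps $\beta'(\pm\pi/2)=\pm2$ and disposes of the boundary terms via Lemma~\ref{lem::proprieties in D(M_n)} and the estimates of \cite{tamekue2022null}.

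\textbf{The dissipation fallback is unavailable.} The Carleman estimate concerns arbitrary $g$ with $\cP_ng$ on the right, not solutions. In any case, energy decay alone never bounds $\|g_n(T)\|$ by the localized observation. High modes concentrate at the equator, and this is exactly what the correct sign condition $\beta'\sin x\ge0$ exploits.
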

\begin{remark}
	Notice that the main difference between the Carleman estimate provided in \cite[Proposition 4.2]{tamekue2022null} when the control set does not touch the degeneracy, and that given in  Proposition~\ref{pro::uniform Carleman estimate} when it contains the degeneracy in its interior is that the weight parameter $s$ does not depend on $n$ in the second case. This independence allows us to obtain a uniform observability result from $\Gamma_{b}$ in an arbitrarily short time $T>0$.
\end{remark}

\begin{proof}[\textit{Proof} of Proposition~\ref{pro::degenerate 1D observability inequality}]
We obtain the uniform observability inequality (38) for every $T>0$ for system (25) when $n\in\N^\ast$. Let $\tilde{g}_n = \operatorname{U}g_n\in C([0,T]; L^2(-\pi/2,\pi/2))\cap C^2((0,T);D(\operatorname{M_{n}}))$ be the solution of system \eqref{eq::transformer one-dim parabolic equation for n neq 0}, where $g_n$ is the Fourier component \eqref{eq::fourier coefficient of g} and $\operatorname{U}$, the unitary transformation defined in \eqref{eq::unitary transformation U}. Let $Q:=(0,T)\times(-\pi/2,\pi/2)$ and $dQ:=dxdt$. Then by Carleman estimate \eqref{eq::uniform Carleman estimate}, one deduces
	\begin{eqnarray}\label{eq114}
		C_1\int_{Q}\theta(t)^3|\tilde{g}_n(t,x)|^2e^{-2s\varphi}dQ&\le&\int_{0}^{T}\int_{\Gamma_{b}}\theta(t)^3|\tilde{g}_n(t,x)|^2e^{-2s\varphi}dQ,
	\end{eqnarray}
	for all $s\ge C_0(T+T^{2})$, and for some constants $C_0,C_1>0$ independent of $n$, $T$ and $\tilde{g}_n$. 
	For $t\in(T/3,2T/3)$, we have due to dissipation rate \eqref{eq::dissipation rate},
	$$\frac{4}{T^2}\le\theta(t)\le\frac{9}{2T^2}\hspace{0.5cm}\mbox{and}\hspace{0.5cm}\int_{-\frac\pi 2}^{\frac\pi 2}|\tilde{g}_n(T,x)|^2dx\le e^{-\frac{2}{3}nT}\int_{-\frac\pi 2}^{\frac\pi 2}|\tilde{g}_n(t,x)|^2dx.$$
	Integrating over $(T/3,2T/3)$, we find, using \eqref{eq114}
 \begin{eqnarray}
     \int_{-\frac\pi 2}^{\frac\pi 2}|\tilde{g}_n(T,x)|^2dx&\le&
		\frac{1}{C_3}\frac{T^5}{64}\frac{18}{8s^3\beta_{*}^3}e^{-\frac{2}{3}nT}e^{\frac{9}{T^2}s\beta^{*}}\int_{0}^{T}\int_{\Gamma_{b}}|\tilde{g}_n(t,x)|^2dxdt\nonumber\\
  &\le&C_3T^2e^{C_4\left(1+\frac 1 T\right)}\int_{0}^{T}\int_{\Gamma_{b}}|\tilde{g}_n(t,x)|^2dxdt,
 \end{eqnarray}
 where $\beta_{*}:=\min\{\beta(x):x\in[-\pi/2,\pi/2]\}$, $\beta^{*}:=\max\{\beta(x):x\in[-\pi/2,\pi/2]\}$ and  $C_3:=C_3(b)>0$ and $C_4:=C_4(b)>0$.
\end{proof}

\section{Minimal time}\label{s::minimal time}
Let $\omega=\omega_{a,b}\times[0,2\pi)$, with $\omega_{a,b}=(a,b)$ and $0\le a<b\le\pi/2$. The purpose of this section is to prove the upper-bound part of Theorem~\ref{thm::main-spherical coordinates degenerate}, namely, null controllability from $\omega$ for every
\[
T>\ln(1/\cos a).
\]
The argument reduces to the uniform null controllability of the one-dimensional systems \eqref{eq::one-dim control system} and uses a moment method. By Remark~\ref{rmk::key}, only the nonzero Fourier modes $n\in\N^*$ require a new argument. 

\subsection{Lower bound estimates involving the associated Legendre functions of the first kind}\label{ss::useful estimates}
We provide in this section useful lower-bound estimates on the $L^2(\omega_{a,\frac\pi 2},\cos xdx)$-norm of the associated Legendre functions of the first kind. Let $n\in\N^{*}$ and $\cL_n$ be defined by \eqref{eq::domain operator L_n}-\eqref{eq::operator L_n}. Recall that eigenfunctions of the operator $-\cL_n$ associated with eigenvalues $\lambda_{\ell,n} = \ell(\ell+1)-n^2$ are given by
$$
	v_{\ell,n}(x) = \sqrt{\frac{2\ell+1}{2}\frac{(\ell-n)!}{(\ell+n)!}}P_\ell^n(\sin x),\hspace{1cm}\forall x\in [-\pi/2,\pi/2]
$$
where $P_{\ell}^{n}$ are the associated Legendre functions of the first kind.
One has the following \(L^{2}\)-mass estimates.

\begin{proposition}\label{pro:weighted estimates}
Let $0\le a\le \tfrac{\pi}{2}$, fix $n\in\N^\ast$, and let $\ell\in\N^\ast$ with $\ell\ge n$. Then, it holds
\begin{equation}\label{eq:L2 mass estimate}
    \int_a^{\frac{\pi}{2}}|v_{\ell,n}(x)|^2\,\cos{x}\,dx\ge C_{\ell,n}\cos^{2n+2}{a}
\end{equation}
where
\begin{equation}\label{eq:Cln}
    C_{\ell,n}:=\frac{(2\ell+1)(n+1)}{2^{2n+2}}\frac{(\ell-n)!(\ell+n)!}{((\ell+1)!)^2}.
\end{equation}
\end{proposition}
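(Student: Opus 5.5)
The plan is to change variables $s=\sin x$, so that $\cos x\,dx=ds$ and $\cos^2 a=1-\alpha^2$ with $\alpha=\sin a$. Writing $c_{\ell,n}^2:=\frac{2\ell+1}{2}\frac{(\ell-n)!}{(\ell+n)!}$, the claim \eqref{eq:L2 mass estimate} becomes
\begin{equation*}
\int_\alpha^1 |P_\ell^n(s)|^2\,ds\;\ge\;\frac{n+1}{2^{2n+1}}\Big(\frac{(\ell+n)!}{(\ell+1)!}\Big)^2(1-\alpha^2)^{n+1}.
\end{equation*}
This reduced form is obtained by dividing $C_{\ell,n}$ by $c_{\ell,n}^2$. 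It suggests a duality (Cauchy--Schwarz) argument. We pick a test function $w\ge0$ on $(\alpha,1)$ and use
\begin{equation*}
\int_\alpha^1|P_\ell^n|^2\,ds\ \ge\ \frac{\big(\int_\alpha^1 P_\ell^n w\,ds\big)^2}{\int_\alpha^1 w^2\,ds}.
\end{equation*}
The function $w$ is chosen so that the numerator can be computed exactly and the denominator is elementary.

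\textbf{Choice of $w$.} The natural candidate is $w(s)=(1-s^2)^{n/2}$, which matches the factor $(1-s^2)^{n/2}$ in $P_\ell^n(s)=(-1)^n(1-s^2)^{n/2}P_\ell^{(n)}(s)$. The denominator is then $\int_\alpha^1(1-s^2)^n\,ds$. Since $1-s^2\le (1-\alpha^2)$ and $1-s\le 1-\alpha$ on $(\alpha,1)$, this is at most a constant times $(1-\alpha^2)^{n}(1-\alpha)$. A cleaner variant is $w(s)=(1-s^2)^{n/2}s$, for which the primitive $-(1-s^2)^{n+1}/(2(n+1))$ is exact. This gives
\begin{equation*}
\int_\alpha^1 w^2\le \int_\alpha^1 s(1-s^2)^n\,ds=\frac{(1-\alpha^2)^{n+1}}{2(n+1)},
\end{equation*}
which explains both the factor $(n+1)$ and the power $n+1$ in $C_{\ell,n}$. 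I will commit to this latter choice, possibly weighted by $s$ differently if the numerator calculation requires it.

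\textbf{The numerator (main obstacle).} One needs an explicit lower bound for
\begin{equation*}
\Big|\int_\alpha^1 (1-s^2)^{n}\,s\,P_\ell^{(n)}(s)\,ds\Big|
\end{equation*}
of size $\frac{(\ell+n)!}{2^{n}(\ell+1)!}(1-\alpha^2)^{n+1}$ times a harmless constant. The first attempt will be to integrate by parts, moving one derivative from $P_\ell^{(n)}$ onto $s(1-s^2)^n$. At $s=1$ all boundary terms vanish, and the remaining integrals can be reorganised through the Legendre equation
\begin{equation*}
\big((1-s^2)^{n+1}P_\ell^{(n+1)}\big)'=-(\ell-n)(\ell+n+1)(1-s^2)^nP_\ell^{(n)}.
\end{equation*}
This identity says that $(1-s^2)^nP_\ell^{(n)}$ is an exact derivative. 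Hence
\begin{equation*}
\int_\alpha^1(1-s^2)^nP_\ell^{(n)}\,ds=\frac{(1-\alpha^2)^{n+1}P_\ell^{(n+1)}(\alpha)}{(\ell-n)(\ell+n+1)},
\end{equation*}
and the numerator is reduced to a pointwise value of a derivative of $P_\ell$ at $\alpha$. That value must then be bounded below using $P_\ell^{(m)}(1)=\frac{(\ell+m)!}{2^m m!(\ell-m)!}$ together with monotonicity of $P_\ell^{(m)}$ on the part of $[\alpha,1]$ to the right of its largest zero. The ratio $(\ell+n)!/(\ell+1)!$ is expected to emerge from the normalising constants $(\ell-n)(\ell+n+1)$ combined with these endpoint values.

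The hard part is the sign and size control of $P_\ell^{(n+1)}(\alpha)$ when $\alpha$ lies inside the oscillatory region. If this fails, the fallback is to replace the exact test function by the Rodrigues representation
\begin{equation*}
P_\ell^{(n)}=\frac{1}{2^\ell\ell!}\frac{d^{\ell+n}}{ds^{\ell+n}}(s^2-1)^\ell,
\end{equation*}
and to estimate $\int_\alpha^1|P_\ell^n|^2$ by integrating by parts against $(1-s^2)^n$ until only manifestly positive terms near $s=1$ remain. One then keeps only the contribution of a neighbourhood of $s=1$, where $P_\ell^{(n)}$ is positive and increasing, and accepts the crude constant $2^{-2n-2}$.
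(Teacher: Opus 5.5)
There is a genuine gap. Your reduction is correct: both the reduced inequality and the bound $\int_\alpha^1 s^2(1-s^2)^n\,ds\le (1-\alpha^2)^{n+1}/(2(n+1))$ hold. The step that fails is the numerator.

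\textbf{The numerator vanishes for some $\alpha$.} For any test function $w$ independent of $\alpha$, the moment $\Phi(\alpha)=\int_\alpha^1 P_\ell^n w\,ds$ satisfies $\Phi'(\alpha)=-P_\ell^n(\alpha)w(\alpha)$. So $\Phi$ oscillates as soon as $\alpha$ lies to the left of the largest zero of $P_\ell^{(n)}$, and it does vanish.
\begin{itemize}
\item For $w=(1-s^2)^{n/2}$, your own identity gives $\Phi(\alpha)=\pm(1-\alpha^2)^{n+1}P_\ell^{(n+1)}(\alpha)/((\ell-n)(\ell+n+1))$. This is zero at every zero of $P_\ell^{(n+1)}$ in $(0,1)$. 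For $n=1$, $\ell=4$ it equals $\mp\frac{1}{36}(1-\alpha^2)^2(105\alpha^2-15)$, which vanishes at $\alpha=1/\sqrt7$.
\item For the choice you commit to, $w=s(1-s^2)^{n/2}$, a direct computation with $n=1$, $\ell=6$ gives
\[
\int_\alpha^1 s(1-s^2)P_6'(s)\,ds=\tfrac78\,\alpha^3(1-\alpha^2)^2(11\alpha^2-5),
\]
which vanishes at $\alpha=0$ and at $\alpha=\sqrt{5/11}$.
\end{itemize}
At these values of $a$, your Cauchy--Schwarz inequality yields only $\int_\alpha^1|P_\ell^n|^2\ge 0$. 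So the lower bound you want for the numerator is false, and no control of the sign or size of $P_\ell^{(n+1)}(\alpha)$ can repair it. Monotonicity to the right of the largest zero covers only $\alpha$ in the last lobe. Reweighting by other powers of $s$ does not help either: such factors are essentially constant near $s=1$, and the oscillation comes from $P_\ell^{(n)}$ itself.

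\textbf{The fallback is not yet an argument.} Integrating by parts against $(1-s^2)^n$ does not produce manifestly positive terms. ``Positive and increasing near $s=1$'' gives only the upper bound $P_\ell^{(n)}\le P_\ell^{(n)}(1)$ there. You would need a quantitative lower bound on how fast $P_\ell^{(n)}$ drops from its endpoint value. You would also need to track the constant carefully: in Proposition~\ref{pro::proposition MM}, any loss of the form $c^n$ with $c>1$ in $C_{\ell,n}$ would shift the control time above $\ln(1/\cos a)$.

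\textbf{The missing idea.} Use a test function that depends on $\alpha$ and reproduces a \emph{point value} that never vanishes, namely
\[
P_\ell^{(n)}(1)=\frac{(\ell+n)!}{2^n n!(\ell-n)!},
\]
rather than an integral of $P_\ell^{(n)}$. The paper proceeds as follows.
\begin{enumerate}
\item Substitute $s=1-(1-\alpha)u$ in $\int_\alpha^1(1-s^2)^nP_\ell^{(n)}(s)^2\,ds$, and use $1+s\ge 1+\alpha$ together with $(1-\alpha)^{n+1}(1+\alpha)^n=\cos^{2n+2}a/(1+\sin a)\ge\frac12\cos^{2n+2}a$.
\item What remains is $\int_0^1u^nS(u)^2\,du$, where $S(u)=P_\ell^{(n)}(1-(1-\alpha)u)$ is a polynomial of degree at most $\ell-n$ with $S(0)=P_\ell^{(n)}(1)$ for every $\alpha$.
\item The Christoffel-function bound for the weight $u^n$ on $[0,1]$ gives $\int_0^1u^nS^2\ge S(0)^2/K_{\ell-n}(0,0)$.
\item Using the shifted Jacobi basis $\sqrt{2k+n+1}\,P_k^{(0,n)}(2u-1)$ and a telescoping identity, $K_{\ell-n}(0,0)=\sum_{k=0}^{\ell-n}(2k+n+1)\binom{k+n}{k}^2=(n+1)\binom{\ell+1}{n+1}^2$.
\end{enumerate}
This produces exactly $C_{\ell,n}$. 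In your framework, this is Cauchy--Schwarz with the reproducing kernel $u\mapsto K_{\ell-n}(0,u)$, transported back to $[\alpha,1]$, as the test function. Uniformity in $\alpha$ comes from the affine rescaling, which fixes both the interval and the reproduced value.
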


\begin{proof}
The inequality is obviously satisfied for $a=\pi/2$. Assume now that $0\le a<\pi/2$. We start from the change of variables $t=\sin{x}$, $dt=\cos{x}\,dx$. Then
\[
\int_a^{\frac{\pi}{2}}|v_{\ell,n}(x)|^2\,\cos{x}\,dx
= \frac{2\ell+1}{2}\frac{(\ell-n)!}{(\ell+n)!}
\int_{\sin a}^1\big(P_\ell^n(t)\big)^2\,dt.
\]
Let $P_\ell$ denote the Legendre polynomial of degree $\ell$. Using the standard formula \cite[p.~327]{courant1953}
\[
P_\ell^n(t)=(-1)^n(1-t^2)^{n/2}Q_{\ell,n}(t),
\qquad Q_{\ell,n}(t):=P_\ell^{(n)}(t)=\frac{d^n}{dt^n}P_\ell(t),
\]
we obtain
\begin{equation}\label{eq:inter}
\int_a^{\frac{\pi}{2}}|v_{\ell,n}(x)|^2\,\cos{x}\,dx
= \frac{2\ell+1}{2}\frac{(\ell-n)!}{(\ell+n)!}
\int_{\sin a}^1(1-t^2)^nQ_{\ell,n}(t)^2\,dt.
\end{equation}
Set $x=1-t$ and $y=1-\sin a$. Then $x\in[0,y]$ and $1-t^2=x(2-x)$. With
\[
R(x):=Q_{\ell,n}(1-x)=P_\ell^{(n)}(1-x),
\]
we have
\begin{equation}\label{eq:inter1}
\int_{\sin a}^1(1-t^2)^nQ_{\ell,n}(t)^2\,dt
=\int_0^y x^n(2-x)^nR(x)^2\,dx
=y^{n+1}\int_0^1u^n(2-yu)^nR(yu)^2\,du.
\end{equation}
Since $0\le u\le1$, one has $2-yu\ge 2-y=1+\sin a$. Hence
\[
\int_0^yx^n(2-x)^nR(x)^2\,dx
\ge y^{n+1}(1+\sin a)^n\int_0^1u^nR(yu)^2\,du.
\]
Moreover,
\[
y^{n+1}(1+\sin a)^n
=\frac{\cos^{2n+2}a}{1+\sin a}
\ge \frac12\cos^{2n+2}a.
\]
Thus
\begin{equation}\label{eq:inter2}
\int_0^yx^n(2-x)^nR(x)^2\,dx
\ge\frac{\cos^{2n+2}a}{2}\int_0^1u^nR(yu)^2\,du.
\end{equation}

It remains to estimate the last integral from below uniformly with respect to $y\in[0,1]$. Define
\[
S(u):=R(yu),\qquad u\in[0,1].
\]
Then $S$ is a polynomial of degree at most $\ell-n$, and
\[
S(0)=R(0)=P_\ell^{(n)}(1)
=\frac{(\ell+n)!}{2^n n!(\ell-n)!}=:c_0.
\]
We use the Christoffel-function estimate for the space of polynomials of degree at most $\ell-n$ with respect to the weight $u^n$ on $[0,1]$. Let $(p_k)_{k=0}^{\ell-n}$ be an orthonormal basis of this weighted polynomial space. Expanding
\[
S(u)=\sum_{k=0}^{\ell-n}a_kp_k(u),
\]
Cauchy-Schwarz's inequality gives
\[
|S(0)|^2
\le \left(\int_0^1u^nS(u)^2\,du\right)
\left(\sum_{k=0}^{\ell-n}p_k(0)^2\right).
\]
Consequently,
\[
\int_0^1u^nS(u)^2\,du
\ge \frac{c_0^2}{K_{\ell,n}(0,0)},
\qquad
K_{\ell,n}(0,0):=\sum_{k=0}^{\ell-n}p_k(0)^2.
\]
For the weight $u^n$ on $[0,1]$, we choose the shifted Jacobi polynomials
\begin{equation}\label{eq:shifted Jacobi polynomials}
p_k(u)=\sqrt{2k+n+1}\,P_k^{(0,n)}(2u-1).
\end{equation}
Then
\[
p_k(0)^2=(2k+n+1)\binom{k+n}{k}^2,
\]
because $P_k^{(0,n)}(-1)=(-1)^k\binom{k+n}{k}$. Hence
\[
K_{\ell,n}(0,0)
=\sum_{k=0}^{\ell-n}(2k+n+1)\binom{k+n}{k}^2.
\]
Using the identity
\[
(2k+n+1)\binom{k+n}{k}^2
=(n+1)\left[
\binom{k+n+1}{n+1}^2-
\binom{k+n}{n+1}^2
\right],
\]
we obtain, by telescoping,
\[
K_{\ell,n}(0,0)=(n+1)\binom{\ell+1}{n+1}^2.
\]
Therefore
\begin{equation}\label{eq:christoffel-lower}
\int_0^1u^nR(yu)^2\,du
\ge \frac{c_0^2}{(n+1)\binom{\ell+1}{n+1}^2}.
\end{equation}
Combining \eqref{eq:inter}, \eqref{eq:inter1}, \eqref{eq:inter2}, and \eqref{eq:christoffel-lower} yields
\[
\int_a^{\frac{\pi}{2}}|v_{\ell,n}(x)|^2\,\cos{x}\,dx
\ge
\frac{2\ell+1}{2}\frac{(\ell-n)!}{(\ell+n)!}
\frac{c_0^2}{2(n+1)\binom{\ell+1}{n+1}^2}
\cos^{2n+2}a.
\]
Substituting
\[
c_0=\frac{(\ell+n)!}{2^n n!(\ell-n)!},
\qquad
\binom{\ell+1}{n+1}=\frac{(\ell+1)!}{(n+1)!(\ell-n)!},
\]
gives exactly \eqref{eq:Cln}. This completes the proof.
\end{proof}

\subsection{Moment method}\label{ss::MM}

This section is devoted to the moment method for the boundary-touching crown and proves the first part of Theorem~2.6 in the case $\omega=(a,\pi/2)\times[0,2\pi)$.

Recall from Section~\ref{ss::FE} that if $f_0\in\operatorname{H_{\sigma}}$ is the initial state in system~\eqref{eq::spherical parabolic BG equation}, we denote by $f_{0,n}\in\cH_{n}\cong  L^2((-\pi/2,\pi/2);\cos xdx)$ its Fourier coefficients. The eigenfunctions $(v_{\ell,n}(x))_{\ell\ge n\ge 1}$ of operator $\cL_n$ associated with eigenvalues $\lambda_{\ell,n}$ form a complete orthonormal set of the Hilbert space $\cH_{n}$. 
We expand 
\begin{equation}\label{eq::fourier coef of f_n in H_n}
    f_{0,n} = \sum\limits_{\ell\ge n} f_{0,n}^\ell v_{\ell,n},\qquad\qquad f_{0,n}^\ell :=\langle f_{0,n}, v_{\ell,n}\rangle_{\cH_n},\qquad\forall n\in\N^{*}.
\end{equation}

The solution $f_n$ of $1$D control system \eqref{eq::one-dim control system} with $\omega_x=\omega_{a,b}$ is given in time $T>0$ by
\begin{equation}\label{eq::one-dim control system solution}
	f_n(T,x) = e^{T\cL_n}f_{0,n}(x)+\int_{0}^{T}e^{(T-t)\cL_n}u_n(t,x)\mathbf 1_{\omega_{a,b}}(x)dt,\qquad x\in (-\pi/2,\pi/2).
\end{equation}
Multiplying~\eqref{eq::one-dim control system solution} by $v_{\ell,n}$ and integrating over $(-\pi/2,\pi/2)$ (with respect to the measure $\cos xdx$) leads to
\begin{equation}
    \langle f_n(T),v_{\ell,n}\rangle_{\cH_n}=e^{-\lambda_{\ell,n}T}f_{0,n}^\ell+\int_{0}^{T}\int_{\omega_{a,b}}e^{-\lambda_{\ell,n}(T-t)}u_n(t,x)v_{\ell,n}(x)\cos xdxdt, \quad\forall\ell\ge n\ge 1.
\end{equation}
It follows that finding a control $u_n\in L^2(0,T;\cH_n)$ such that $f_n(T)=0$
amounts to finding $u_n\in L^2(0,T;\cH_n)$ satisfying the following moment
conditions
\begin{equation}\label{eq::moment problem}
    -e^{-\lambda_{\ell,n}T}f_{0,n}^\ell
    = \int_{0}^{T}\int_{\omega_{a,b}}
    e^{-\lambda_{\ell,n}(T-t)}u_n(t,x)v_{\ell,n}(x)\cos x\,dx\,dt,
    \qquad \forall \ell\ge n\ge 1.
\end{equation}

Let $(q_\ell^n(t))_{\ell\ge n}$ be a family of functions biorthogonal to the family of functions $(e^{-\lambda_{\ell,n}(T-t)})_{\ell\ge n}$ in the Hilbert space $L^2(0, T)$. One has
\begin{equation}\label{eq::orthogonality relation}
    \int_{0}^{T}q_k^n(t)e^{-\lambda_{\ell,n}(T-t)}dt = \delta_{k,\ell},\qquad\forall k,\;\ell\ge n,
\end{equation}
where $\delta_{k,\ell}$ is the Kronecker symbol.
We look for a control $u_n$ of the form 
$$u_n(t,x)=\sum\limits_{\ell=n}^{+\infty}\alpha_{\ell,n}\,q_{\ell}^n(t)v_{\ell,n}(x)\mathbf 1_{\omega_{a,b}}(x),$$
for some sequence of real numbers $(\alpha_{\ell,n}\,)_{\ell\ge n}$. We then deduce from
\eqref{eq::moment problem} and \eqref{eq::orthogonality relation} that the control $u_n$ is formally given for every $t\in(0,T)$ and every $x\in(-\pi/2,\pi/2)$ by
\begin{equation}\label{eq::1D control functions}
    u_n(t,x) = -\sum\limits_{\ell=n}^{+\infty}\frac{\displaystyle e^{-\lambda_{\ell,n}T}\langle f_{0,n}, v_{\ell,n}\rangle_{\cH_n}}{\displaystyle\|v_{\ell,n}\mathbf 1_{\omega_{a,b}}\|_{\cH_n}^2}q_{\ell}^n(t)v_{\ell,n}(x)\mathbf1_{\omega_{a,b}}(x),\qquad\qquad\forall n\in\N^{*}.
\end{equation}

\begin{remark}\label{rmk::important uniform NC}
   It remains to rigorously justify computations leading to \eqref{eq::1D control functions}, which consists of
\begin{enumerate}
    \item showing the existence of a family of functions $(q_\ell^n(t))_{\ell\ge n}$ that are biorthogonal to the family of functions $(e^{-\lambda_{\ell,n}(T-t)})_{\ell\ge n}$;
    \item showing that $u_n$ defined by \eqref{eq::1D control functions} yields the uniform null controllability of $1$D control problem \eqref{eq::one-dim control system}. By construction, it remains to show that $u_n\in L^2(0,T;\cH_n)$ and that \eqref{eq::uniform null controllability estimates} holds. 
\end{enumerate}
\end{remark}

Recall from \cite[Theorem 1.1]{fattorini1974} the following result, which gives a sufficient condition for bounded from above (that is, uniform in $n$ and depends exponentially on $\ell$), the $L^2(0, T)$ norm of $q_\ell^n$.

\begin{theorem}[\cite{fattorini1974}]\label{thm::fattorini-Russel}
    Let $\rho>0$ and $\cN:\varepsilon\in(0,+\infty)\mapsto\cN(\varepsilon)\in\N^{*}$. Denote by $\cL(\rho,\cN)$ the class of all sequences of positive numbers $(\sigma_\ell)_{\ell\in\N^{*}}$ that satisfy the conditions
    \begin{align}
        \sigma_1\ge\rho,\qquad\qquad\sigma_{\ell+1}-\sigma_\ell\ge\rho, \label{eq::gap condition}\\
        \sum\limits_{\ell=\cN(\varepsilon)}^{+\infty}\frac{1}{\sigma_{\ell}}\le\epsilon,\qquad\varepsilon>0.\label{eq::finite sum of inverse}
    \end{align}
    Then there is a positive function $K(\varepsilon)$ defined for $\varepsilon>0$ and determined solely by $T>0$, $\rho$ and $\cN$, such that
    \begin{equation}\label{eq::upper bound q}
        \|q_\ell^n\|_{L^2(0,T)}\le K(\varepsilon)e^{\varepsilon\sigma_\ell},\qquad\forall\ell\in \N^{*}.
    \end{equation}
\end{theorem}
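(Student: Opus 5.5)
The plan is to construct the biorthogonal family via the Paley--Wiener theorem, following the complex-analytic route of Fattorini--Russell. After the change of variables $s=T-t$, the conditions \eqref{eq::orthogonality relation} read $\int_0^T q_k^n(T-s)e^{-\sigma_\ell s}\,ds=\delta_{k\ell}$. Equivalently, the Fourier--Laplace transform $\widehat{Q}_k(z):=\int_0^T q_k^n(T-s)e^{-izs}\,ds$ must satisfy $\widehat{Q}_k(-i\sigma_\ell)=\delta_{k\ell}$. It is convenient to centre the support and work on $(-T/2,T/2)$. By Paley--Wiener, it then suffices to build, for each $k$, an entire function $\Psi_k$ with three properties: it has exponential type at most $T/2$, it lies in $L^2(\R)$, and it satisfies $\Psi_k(i\sigma_\ell)=\delta_{k\ell}$ (after the trivial rotation $z\mapsto -z$). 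The bound \eqref{eq::upper bound q} then follows from Plancherel, since $\|q_k^n\|_{L^2(0,T)}=(2\pi)^{-1/2}\|\Psi_k\|_{L^2(\R)}$.

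\textbf{Step 1 (canonical product).} Set $P(z):=\prod_{j\ge1}\bigl(1+z^2/\sigma_j^2\bigr)$. Its zeros are exactly $\pm i\sigma_j$. The product converges because \eqref{eq::finite sum of inverse} gives $\sum 1/\sigma_j<\infty$, and $P$ has exponential type $0$. Using \eqref{eq::finite sum of inverse} with the tail index $\cN(\varepsilon)$, I will show $|P(z)|\le K_1(\varepsilon)e^{\varepsilon|z|}$. The key computation is on the real line: $\int_\R \frac{\log P(x)}{1+x^2}\,dx=2\pi\sum_j\log(1+1/\sigma_j)<\infty$.

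\textbf{Step 2 (multiplier).} Since $\log P$ has a finite logarithmic integral on $\R$, there exists an entire function $M$ with the following properties. It has exponential type $\le T/2-\delta$, it is bounded on the real axis by $C/\bigl((1+x^2)P(x)\bigr)$, and it satisfies $|M(iy)|\ge c\,e^{-\varepsilon|y|}$ on the imaginary axis. This is the Beurling--Malliavin or Ingham-type multiplier. I would construct it explicitly as an infinite product of $\sin(\delta_m z)/(\delta_m z)$ factors with $\sum\delta_m<T/2$, tuning the $\delta_m$ against the dyadic blocks of $(\sigma_j)$. The constants in this construction depend only on $T$, $\rho$ and $\cN$.

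\textbf{Step 3 (interpolating functions and their size).} Define
\[
\Psi_k(z):=\frac{M(z)}{M(i\sigma_k)}\cdot\frac{P(z)}{(1+z^2/\sigma_k^2)\,P_k'(i\sigma_k)},
\qquad P_k(z):=\prod_{j\ne k}\bigl(1+z^2/\sigma_j^2\bigr),
\]
with a suitable normalization so that $\Psi_k(i\sigma_k)=1$. Then $\Psi_k$ vanishes at all $i\sigma_\ell$ with $\ell\ne k$, has type $\le T/2$, and lies in $L^2(\R)$ by Step 2. Its norm is therefore controlled by $|M(i\sigma_k)|^{-1}\,|P_k(i\sigma_k)|^{-1}$, where $P_k(i\sigma_k)=\prod_{j\ne k}(1-\sigma_k^2/\sigma_j^2)$. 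The first factor is $\le c^{-1}e^{\varepsilon\sigma_k}$ by Step 2. For the second factor, I will use the gap condition \eqref{eq::gap condition} to get $|\sigma_k-\sigma_j|\ge\rho|k-j|$. The indices split into two groups. The finitely many indices $j<\cN(\varepsilon)$ contribute at most a polynomial factor in $\sigma_k$, using $\sigma_k\ge\rho k$. The tail $j\ge\cN(\varepsilon)$ contributes $\exp(-C\varepsilon\sigma_k)$, by a standard comparison of $\sum\log|1-\sigma_k^2/\sigma_j^2|$ with $\sigma_k\sum_{j\ge\cN}1/\sigma_j$. Together this gives $|P_k(i\sigma_k)|^{-1}\le K_2(\varepsilon)e^{C\varepsilon\sigma_k}$, and rescaling $\varepsilon$ yields \eqref{eq::upper bound q}.

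The main obstacle is Step 2. Producing a multiplier of prescribed small type whose decay on $\R$ beats $P$, and whose loss on the imaginary axis is only $e^{\varepsilon\sigma}$, with constants depending only on $(T,\rho,\cN)$, is the delicate quantitative heart of the argument. I would attempt it via the explicit sinc-product tuned to the counting function of $(\sigma_j)$, as in the Fattorini--Russell paper.
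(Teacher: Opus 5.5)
The paper gives no proof of this statement; it cites Fattorini--Russell. Your outline (Paley--Wiener, the zero-type product $P$, a small-type multiplier $M$, interpolating functions $MP_k/(M(i\sigma_k)P_k(i\sigma_k))$) is indeed their complex-analytic route.

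Steps~1 and~3 are sound in substance. The bound $|P(z)|\le(1+|z|/\rho)^{2\cN(\varepsilon)}e^{2\varepsilon|z|}$ is uniform over the class. The lower bound on $|P_k(i\sigma_k)|$ follows from the gap condition together with the count $\#\{j\ge\cN(\varepsilon):\sigma_j\le 2\sigma_k\}\le 2\varepsilon\sigma_k$. This yields an exponent loss of order $\varepsilon\log(1/\varepsilon)\,\sigma_k$ rather than $C\varepsilon\sigma_k$, which is harmless after rescaling. Also, $P_k'(i\sigma_k)$ should read $P_k(i\sigma_k)$.

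\textbf{The genuine problem is the centring step, which makes Step~2 ask for the wrong property.} If $\Psi(z)=\int_{-T/2}^{T/2}\psi(s)e^{-izs}\,ds$ and $\tilde q(u):=\psi(u-T/2)$ on $(0,T)$, then
\[
\int_0^T\tilde q(u)\,e^{-\sigma u}\,du=e^{-\sigma T/2}\,\Psi(-i\sigma).
\]
So biorthogonality requires $\Psi_k(-i\sigma_\ell)=e^{\sigma_\ell T/2}\delta_{k\ell}$, not $\delta_{k\ell}$; the rotation $z\mapsto -z$ does not remove this factor.

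With your normalization $\Psi_k(i\sigma_k)=1$, the true biorthogonal function is $e^{\sigma_kT/2}$ times what you construct. Step~2's bound $|M(iy)|\ge c\,e^{-\varepsilon|y|}$ then only gives $\|q_k\|_{L^2(0,T)}\lesssim e^{(T/2+C\varepsilon)\sigma_k}$. That is not the theorem, and it would be fatal in Proposition~\ref{pro::proposition MM}: there one would only get controllability for $T>2\ln(1/\cos a)$.

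Note also that $|M(iy)|\ge c\,e^{-\varepsilon|y|}$ holds automatically for any sinc product, since $\sin(i\delta y)/(i\delta y)=\sinh(\delta y)/(\delta y)\ge1$. The condition you single out as the delicate one is therefore not the operative one.

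What Step~2 must actually deliver is near-maximal growth on the imaginary axis, $|M(\pm iy)|\ge c_\varepsilon e^{(T/2-\varepsilon)|y|}$, together with $|M(x)|(1+x^2)P(x)\le C$ on $\R$. In particular, the type slack $\delta$ must itself be at most $\varepsilon$. Equivalently, in the uncentred picture the multiplier $e^{-izT/2}M(z)$ may lose at most $e^{-\varepsilon y}$ at $z=-iy$; that is the setting where your inequality would be the right one.

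For $M=\prod_m\sin(\delta_m z)/(\delta_m z)$ with $\sum_m\delta_m=T/2$, this growth does hold. The losses $\sum_{\delta_m|y|\ge1}\log(C\delta_m|y|)$ and $\sum_{\delta_m|y|<1}\delta_m|y|$ are both $o(|y|)$, because $\#\{m:\delta_m\ge1/|y|\}=o(|y|)$ and the tails of $\sum\delta_m$ tend to zero.

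The real content of the 1974 result, however, is that three things can be controlled by $T$, $\rho$ and $\cN$ alone: the choice of $(\delta_m)$, the real-axis domination of $(1+x^2)P$, and this $o(|y|)$ rate. Your proposal leaves exactly this step as a sketch. With the normalization as written, it would not produce $K(\varepsilon)e^{\varepsilon\sigma_\ell}$ even if that step were completed.
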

\begin{remark}
 Fix $n\in\N^{*}$ and let $(\sigma_\ell)_{\ell\ge n}$ be a sequence of real numbers. The family of functions $(q_\ell^n(t))_{\ell\ge n}$ that are biorthogonal to the family of functions $(e^{-\sigma_{\ell}(T-t)})_{\ell\ge n}$ exists only if the following condition from \cite{schwartz1959} is verified,
\begin{equation}\label{eq::Schwartz condition}
    \sum\limits_{\ell=n}^{+\infty}\frac{1}{\sigma_{\ell}}<\infty,\qquad\qquad\forall n\in\N^{*}.
\end{equation}
In the case at hand, it is immediate that \eqref{eq::Schwartz condition} holds for the sequence of eigenvalues $(\lambda_{\ell,n})_{\ell\ge n}$ since $\lambda_{\ell, n}\sim\ell^2$.
\end{remark}

\begin{lemma}\label{lem::uniform lambda FR}
    There exists $\rho>0$ and a map $\cN:\varepsilon\in(0,+\infty)\mapsto\cN(\varepsilon)\in\N^{*}$ such that for every $n\in\N^{*}$, the reindexed sequence 
    \begin{equation}
        \sigma_j^{(n)}:=\lambda_{n+j-1,n},\qquad j\ge 1
    \end{equation}
    belongs to the class $\cL(\rho,\cN)$ of Theorem~\ref{thm::fattorini-Russel}.
\end{lemma}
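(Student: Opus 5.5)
Writing $\ell=n+j-1$, I plan to compute the reindexed sequence explicitly:
\[
\sigma_j^{(n)}=\lambda_{n+j-1,n}=(n+j-1)(n+j)-n^2=(2j-1)n+j(j-1),\qquad j\ge1.
\]
Both conditions of Theorem~\ref{thm::fattorini-Russel} will then be verified directly from this closed form, with constants independent of $n$.

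\emph{Gap condition.} For $j=1$ we have $\sigma_1^{(n)}=n\ge1$. For consecutive terms,
\[
\sigma_{j+1}^{(n)}-\sigma_j^{(n)}=2n+2j\ge 4,
\]
since $n,j\ge1$. Hence $\rho=1$ works uniformly in $n\in\N^*$.

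\emph{Tail condition.} Since $n\ge1$, we get $\sigma_j^{(n)}\ge (2j-1)+j(j-1)=j^2$. Therefore
\[
\sum_{j\ge N}\frac{1}{\sigma_j^{(n)}}\le\sum_{j\ge N}\frac1{j^2}\le \frac{1}{N-1}\qquad (N\ge2).
\]
This suggests defining $\cN(\varepsilon):=\lceil 1/\varepsilon\rceil+1$, which is independent of $n$ and gives $\sum_{j\ge\cN(\varepsilon)}1/\sigma_j^{(n)}\le\varepsilon$ for every $n$.

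The computation is elementary, and there is no real obstacle. The only point needing care is that both $\rho$ and $\cN$ must be independent of $n$. This follows because increasing $n$ only increases each $\sigma_j^{(n)}$ and each gap, so the worst case $n=1$ already controls everything.
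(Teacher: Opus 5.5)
Your proof is correct and follows essentially the paper's route: the same closed form for $\sigma_j^{(n)}$ (the paper writes it as $n+(2n+1)(j-1)+(j-1)^2$), the same gap computation giving $\rho=1$, and an $n$-independent comparison of the tail with a convergent series (the paper uses $\sigma_j^{(n)}\ge (j-1)^2$ for $j\ge2$), with your explicit $\cN(\varepsilon)=\lceil 1/\varepsilon\rceil+1$ a harmless sharpening. One small arithmetic slip: $(2j-1)+j(j-1)=j^2+j-1$, not $j^2$, but since $j^2+j-1\ge j^2$ for $j\ge1$ the bound $\sigma_j^{(n)}\ge j^2$ and everything after it still holds.
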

\begin{proof}
For every $n\ge1$, one has
\[
\sigma_1^{(n)}=\lambda_{n,n}=n\ge1.
\]
Moreover,
\[
\sigma_{j+1}^{(n)}-\sigma_j^{(n)}
=
\lambda_{n+j,n}-\lambda_{n+j-1,n}
=
2(n+j)\ge2.
\]
Thus, \eqref{eq::gap condition} holds uniformly with $\rho=1$. It remains to verify~\eqref{eq::finite sum of inverse} uniformly in $n$. Since
\[
\sigma_j^{(n)}
=
\lambda_{n+j-1,n}
=
(n+j-1)(n+j)-n^2,
\]
we have
\[
\sigma_j^{(n)}
=
n+(2n+1)(j-1)+(j-1)^2.
\]
In particular, for $j\ge2$,
\[
\sigma_j^{(n)}\ge (j-1)^2.
\]
Therefore, for $N\ge2$,
\[
\sum_{j=N}^{+\infty}\frac{1}{\sigma_j^{(n)}}
\le
\sum_{j=N}^{+\infty}\frac{1}{(j-1)^2},
\]
uniformly in $n$. Hence, for every $\eta>0$, we can choose $N(\eta)\ge2$ large enough so that
\[
\sum_{j=N(\eta)}^{+\infty}\frac{1}{(j-1)^2}\le \eta.
\]
It follows that
\[
\sum_{j=N(\eta)}^{+\infty}\frac{1}{\sigma_j^{(n)}}\le \eta,
\qquad \forall n\ge1.
\]
This proves the claim.
\end{proof}

We now prove point 2 of Remark~\ref{rmk::important uniform NC}.
\begin{proposition}\label{pro::proposition MM}
Let $\varepsilon>0$, $0\le a<\pi/2$ and $T>\varepsilon+\ln(1/\cos a)$. There exists a constant $C_0=C_0(a,T,\varepsilon)>0$ such that, for every $n\in\N^{*}$, the control $u_n$ defined in \eqref{eq::1D control functions} satisfies
\begin{equation}
    \|u_n\|_{L^2(0,T;\cH_n)}\ \le\ C_0\,\|f_{0,n}\|_{\cH_n}.
\end{equation}
\end{proposition}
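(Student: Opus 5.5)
The plan is to estimate $u_n$ in \eqref{eq::1D control functions} (here $\omega_{a,b}=(a,\pi/2)$) term by term: first the triangle inequality, then Cauchy--Schwarz in $\ell$, and finally a bound on the remaining sum that is uniform in $n$. Since $\|v_{\ell,n}\mathbf 1_{\omega_{a,\pi/2}}\|_{L^2(0,T;\cH_n)}$-type factors appear only through $\|q_\ell^n v_{\ell,n}\mathbf 1_{\omega}\|_{L^2(0,T;\cH_n)}=\|q_\ell^n\|_{L^2(0,T)}\|v_{\ell,n}\mathbf 1_{\omega}\|_{\cH_n}$, the triangle inequality gives
\[
\|u_n\|_{L^2(0,T;\cH_n)}\le\sum_{\ell\ge n}\frac{e^{-\lambda_{\ell,n}T}|f_{0,n}^\ell|\,\|q_\ell^n\|_{L^2(0,T)}}{\|v_{\ell,n}\mathbf 1_{\omega_{a,\pi/2}}\|_{\cH_n}}.
\]
Two ingredients then bound each summand.
\begin{itemize}
\item By Lemma~\ref{lem::uniform lambda FR}, the reindexed eigenvalue sequences all lie in one class $\cL(1,\cN)$. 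Theorem~\ref{thm::fattorini-Russel} therefore gives $\|q_\ell^n\|_{L^2(0,T)}\le K(\varepsilon)e^{\varepsilon\lambda_{\ell,n}}$, with $K(\varepsilon)$ depending only on $T$ and $\varepsilon$, not on $n$.
\item Proposition~\ref{pro:weighted estimates} gives $\|v_{\ell,n}\mathbf 1_{\omega_{a,\pi/2}}\|_{\cH_n}^{-1}\le C_{\ell,n}^{-1/2}\cos^{-(n+1)}a$.
\end{itemize}
Applying Cauchy--Schwarz in $\ell$ and using $\sum_\ell|f_{0,n}^\ell|^2=\|f_{0,n}\|_{\cH_n}^2$, we get $\|u_n\|\le K(\varepsilon)\,S_n^{1/2}\|f_{0,n}\|_{\cH_n}$, where
\[
S_n:=\sum_{\ell\ge n}C_{\ell,n}^{-1}\,\cos^{-2n-2}a\;e^{-2(T-\varepsilon)\lambda_{\ell,n}}.
\]
It then remains to show $\sup_{n\ge1}S_n<\infty$. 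This is the heart of the matter and the step where the threshold $T>\varepsilon+\ln(1/\cos a)$ enters.

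\emph{The diagonal term $\ell=n$.} Here $\lambda_{n,n}=n$, and
\[
C_{n,n}^{-1}=\frac{2^{2n+2}((n+1)!)^2}{(2n+1)(n+1)(2n)!}.
\]
By Stirling, $(2n)!\sim 4^n(n!)^2/\sqrt{\pi n}$, so the factor $4^n$ cancels and $C_{n,n}^{-1}\le c\,n^{3/2}$ grows only polynomially. Hence the diagonal term is at most
\[
c\,n^{3/2}\cos^{-2}a\;\exp\!\big(-2n\,(T-\varepsilon-\ln(1/\cos a))\big),
\]
which is bounded in $n$ because $\delta:=T-\varepsilon-\ln(1/\cos a)>0$. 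This is precisely where the Legendre lower bound with the exact power $\cos^{2n+2}a$ is essential.

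\emph{The off-diagonal terms $\ell>n$.} I would control them through the ratio of consecutive terms. We have $\lambda_{\ell+1,n}-\lambda_{\ell,n}=2(\ell+1)$, and
\[
\frac{C_{\ell+1,n}^{-1}}{C_{\ell,n}^{-1}}=\frac{(2\ell+1)(\ell+2)^2}{(2\ell+3)(\ell+1-n)(\ell+1+n)}\le(\ell+2)^2.
\]
So the ratio of successive terms of $S_n$ is at most $(\ell+2)^2e^{-4(T-\varepsilon)(\ell+1)}$. Writing $\tau:=T-\varepsilon>0$, this gives
\[
S_n\le(\text{diagonal term})\cdot\sum_{m\ge0}\prod_{j=n}^{n+m-1}(j+2)^2e^{-4\tau(j+1)}.
\]
The general factor in this product tends to $0$ superexponentially, so the sum is bounded by a constant $M(\tau)$ independent of $n$: beyond some index $j_0(\tau)$ each factor is at most $1/2$, and before $j_0$ there are finitely many factors, each bounded by a constant depending only on $\tau$.

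\emph{Conclusion and main obstacle.} Combining these gives $S_n\le c\,M(\tau)\cos^{-2}a\,\sup_n n^{3/2}e^{-2n\delta}$, hence the claim with $C_0=C_0(a,T,\varepsilon)$. The main obstacle is exactly this uniformity in $n$. The delicate points are the cancellation of the $2^{2n}$ in $C_{n,n}$ against $(2n)!$, so that the loss is only polynomial and is absorbed by the exponential margin $\delta$, and checking that the crude ratio bound really is uniform for $\ell$ close to $n$. If the ratio estimate proves too lossy, I would fall back on the explicit bound $C_{\ell,n}^{-1}\le C_{n,n}^{-1}\binom{\ell+1}{n+1}^2\binom{\ell+n}{2n}^{-1}\le C_{n,n}^{-1}(\ell+1)^{2(\ell-n)}$. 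This is again dominated by $e^{-2\tau(\ell-n)(\ell+n+1)}$.
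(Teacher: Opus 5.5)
Your proof is correct, and up to the bound on $S_n$ it coincides with the paper's argument. Both use the triangle inequality, the $n$-uniform Fattorini--Russell estimate obtained through Lemma~\ref{lem::uniform lambda FR}, the mass bound of Proposition~\ref{pro:weighted estimates}, and Cauchy--Schwarz combined with Bessel. The routes differ only in how $\sup_n S_n<\infty$ is proved.

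\emph{The paper's route.} It writes $\ell=n+m$ and bounds $1/C_{n+m,n}$ in closed form by $4(2n+1)(n+m+1)^2(n+m)^m/((2n+2m+1)(n+1))$. This is Lemma~\ref{lem:A-over-power}, proved via $\binom{2n}{n}\ge 2^{2n}/(2n+1)$ and $\binom{n+m}{m}\le (n+m)^m$. It then absorbs the factor $e^{m\ln(n+m)}$ into $e^{-2(T-\varepsilon)(m(2n+1)+m^2)}$ using $\ln r\le \beta r+\ln(1/\beta)-1$ with $\beta=T-\varepsilon$.

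\emph{Your route.} You isolate the diagonal term $\ell=n$. There, exactly as in the paper, the $4^n$ cancellation leaves only polynomial growth, and the margin $T-\varepsilon-\ln(1/\cos a)>0$ is used. You then control the tail through the ratio $C_{\ell+1,n}^{-1}/C_{\ell,n}^{-1}\le(\ell+2)^2$ together with $\lambda_{\ell+1,n}-\lambda_{\ell,n}=2(\ell+1)$. This avoids any closed-form estimate of $C_{\ell,n}$ for $\ell>n$ and is arguably shorter. The paper's route, in exchange, yields a more explicit constant. Your product argument is uniform in $n$: at most $j_0(\tau)$ factors exceed $1/2$, and each is bounded by $\sup_j (j+2)^2e^{-4\tau(j+1)}$, which depends only on $\tau$.

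A few minor points, none of which affects validity:
\begin{itemize}
\item $C_{n,n}^{-1}=\frac{4(n+1)}{2n+1}\cdot 4^n\binom{2n}{n}^{-1}\sim 2\sqrt{\pi n}$, so your $n^{3/2}$ is a harmless overestimate.
\item The factor $(j+2)^2e^{-4\tau(j+1)}$ decays exponentially, not superexponentially; it is the product that decays superexponentially in $m$.
\item When you apply the triangle inequality directly to the infinite series defining $u_n$, say that the resulting finite bound gives absolute, hence norm, convergence of the series in $L^2(0,T;\cH_n)$. This plays the role of the paper's partial-sum and Cauchy-sequence argument.
\end{itemize}
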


\begin{proof}
Fix $n\in\N^{*}$ and set
\[
g_{\ell,n}:=v_{\ell,n}\mathbf 1_{\omega_{a,\pi/2}},\qquad c_{n,\ell}:=-\frac{\displaystyle e^{-\lambda_{\ell,n}T}\langle f_{0,n}, v_{\ell,n}\rangle_{\cH_n}}{\displaystyle\|g_{\ell,n}\|_{\cH_n}^2}.
\]
For $N\ge n$, define the partial sum
\[
u_n^N(t,\cdot):=\sum_{\ell=n}^{N}c_{n,\ell}q_{\ell}^{\,n}(t)g_{\ell,n}.
\]
By the triangle inequality in $L^2(0,T;\cH_n)$ and by the definition of $c_{n,\ell}$,
\[
\|u_n^N\|_{L^2(0,T;\cH_n)}
\le
\sum_{\ell=n}^{N}
e^{-T\lambda_{\ell,n}}
|\langle f_{0,n},v_{\ell,n}\rangle_{\cH_n}|
\frac{\|q_{\ell}^{\,n}\|_{L^2(0,T)}}{\|g_{\ell,n}\|_{\cH_n}}.
\]
By Lemma~\ref{lem::uniform lambda FR} and Theorem~\ref{thm::fattorini-Russel}, there exists a constant $K(\varepsilon)>0$, independent of $n$ and $\ell$, such that
\[
\|q_{\ell}^{\,n}\|_{L^2(0,T)}\le K(\varepsilon)e^{\varepsilon\lambda_{\ell,n}}.
\]
Thus, with $\beta:=T-\varepsilon$,
\[
\|u_n^N\|_{L^2(0,T;\cH_n)}
\le
K(\varepsilon)
\sum_{\ell=n}^{N}
e^{-\beta\lambda_{\ell,n}}
|\langle f_{0,n},v_{\ell,n}\rangle_{\cH_n}|
\frac{1}{\|g_{\ell,n}\|_{\cH_n}}.
\]
Cauchy-Schwarz's inequality and Bessel's inequality give
\begin{equation}\label{eq:MM-partial-estimate}
\|u_n^N\|_{L^2(0,T;\cH_n)}
\le
K(\varepsilon)\|f_{0,n}\|_{\cH_n}
\left(
\sum_{\ell=n}^{N}
\frac{e^{-2\beta\lambda_{\ell,n}}}{\|g_{\ell,n}\|_{\cH_n}^2}
\right)^{1/2}.
\end{equation}
It remains to prove that
\[
S_n:=\sum_{\ell=n}^{+\infty}
\frac{e^{-2\beta\lambda_{\ell,n}}}{\|g_{\ell,n}\|_{\cH_n}^2}
\]
is bounded uniformly in $n$. By the localized lower bound \eqref{eq:L2 mass estimate},
\[
S_n\le
\cos^{-2n-2}a
\sum_{\ell=n}^{+\infty}\frac{e^{-2\beta\lambda_{\ell,n}}}{C_{\ell,n}},
\qquad
C_{\ell,n}=\frac{(2\ell+1)(n+1)}{2^{2n+2}}
\frac{(\ell-n)!(\ell+n)!}{((\ell+1)!)^2}.
\]
Writing $\ell=n+m$, $m\ge0$, we have $\lambda_{n+m,n}=m(2n+1)+n+m^2$.
Moreover, Lemma~\ref{lem:A-over-power} gives
\[
\frac{1}{C_{n+m,n}}
\le
\frac{4(2n+1)(n+m+1)^2(n+m)^m}{(2n+2m+1)(n+1)}.
\]
Let $\alpha:=\ln(1/\cos a)$,
so that $\cos^{-2n-2}a=e^{2(n+1)\alpha}$. By assumption, $\beta> \alpha$. Hence
\begin{equation}\label{eq:Sn-pre-bound}
S_n\le
\frac{4(2n+1)}{n+1}e^{2(n+1)\alpha}
\sum_{m=0}^{+\infty}
\frac{(n+m+1)^2}{2n+2m+1}
e^{m\ln(n+m)}e^{-2\beta(m(2n+1)+n+m^2)}.
\end{equation}
Since
\[
\frac{2n+1}{n+1}\le2,
\qquad
\frac{(n+m+1)^2}{2n+2m+1}\le n+m+1,
\]
we obtain
\begin{equation}\label{eq:Sn-simple-bound}
S_n\le
8e^{2\alpha}
\sum_{m=0}^{+\infty}(n+m+1)
\exp\Big(-2(\beta-\alpha)n-2\beta m(2n+1)-2\beta m^2+m\ln(n+m)\Big).
\end{equation}
The term corresponding to $m=0$ is uniformly bounded in $n$. For $m\ge1$, we use the elementary estimate
\[
\ln r\le \beta r+\ln(1/\beta)-1,
\qquad r>0.
\]
Thus
\[
m\ln(n+m)\le \beta mn+\beta m^2+\bigl(\ln(1/\beta)-1\bigr)m.
\]
Therefore, for $m\ge1$,
\begin{equation*}
    -2(\beta-\alpha)n-2\beta m(2n+1)-2\beta m^2+m\ln(n+m)\le -2(\beta-\alpha)n-3\beta mn-\beta m^2+\bigl(\ln(1/\beta)-1-2\beta\bigr)m.
\end{equation*}
Using $n+m+1\le(n+1)(m+1)$ and dropping the negative term $-3\beta mn$, we get
\[
\sum_{m=1}^{+\infty}(n+m+1)e^{-2(\beta-\alpha)n-2\beta m(2n+1)-2\beta m^2+m\ln(n+m)}
\le
(n+1)e^{-2(\beta-\alpha)n}\sum_{m=1}^{+\infty}(m+1)e^{-\beta m^2+\gamma_\beta m},
\]
where $\gamma_\beta:=\ln(1/\beta)-1-2\beta$. The sequence $(n+1)e^{-2(\beta-\alpha)n}$ is bounded because $\beta>\alpha$, and the series $\sum_{m\ge1}(m+1)e^{-\beta m^2+\gamma_\beta m}$ converges. Consequently,
\[
M_{a,T,\varepsilon}:=\sup_{n\ge1}S_n<+\infty.
\]
It follows from \eqref{eq:MM-partial-estimate} that
\[
\|u_n^N\|_{L^2(0,T;\cH_n)}
\le
K(\varepsilon)M_{a,T,\varepsilon}^{1/2}\|f_{0,n}\|_{\cH_n},
\qquad N\ge n.
\]
Applying the same estimate to $u_n^{N_2}-u_n^{N_1}$ and using Bessel's inequality on the tail of the Fourier coefficients shows that $(u_n^N)_{N\ge n}$ is Cauchy in $L^2(0,T;\cH_n)$. Its limit is precisely the series defining $u_n$, and passing to the limit gives
\[
\|u_n\|_{L^2(0,T;\cH_n)}
\le
C_0\|f_{0,n}\|_{\cH_n},
\qquad
C_0:=K(\varepsilon)M_{a,T,\varepsilon}^{1/2}.
\]
The constant $C_0$ depends only on $a,T,\varepsilon$, and the proof is complete.
\end{proof}

\begin{remark}
Proposition~\ref{pro::proposition MM} therefore yields the first part of Theorem~\ref{thm::main-spherical coordinates degenerate} for the boundary-touching strip $\omega=(a,\pi/2)\times[0,2\pi)$.
\end{remark}

\subsection{Cut-off argument}\label{ss:cut-off argument}
In this section, we remove the restriction $b=\pi/2$ by a cut-off argument on the full domain to complete the proof of the first part of Theorem~\ref{thm::main-spherical coordinates degenerate}.

\begin{proposition}\label{pro::cutoff-reduction}
Let $0\le a<b<\pi/2$. Then system~\eqref{eq::spherical parabolic BG equation}
is null controllable from $\omega_{a,b}\times[0,2\pi)$ in time $T>\ln(1/\cos a)$.
\end{proposition}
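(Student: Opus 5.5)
We combine the two controllability results already available: null controllability from the boundary-touching strip $\omega_{a,\pi/2}\times[0,2\pi)$ in any time $T>\ln(1/\cos a)$ (Proposition~\ref{pro::proposition MM} together with Remark~\ref{rmk::key} for $n=0$), and null controllability from the symmetric strip $\Gamma_b\times[0,2\pi)$ in arbitrary time (Theorem~\ref{thm::intrinsic SBG non-degenerate}).

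Fix $T>\ln(1/\cos a)$ and choose $a<a'<b'<b$ such that $T>\ln(1/\cos a')$; this is possible because $a'\mapsto\ln(1/\cos a')$ is continuous. Pick a cut-off $\chi\in C^\infty([-\pi/2,\pi/2])$ depending only on $x$, with $\chi\equiv1$ on $[b',\pi/2]$ and $\chi\equiv0$ on $[-\pi/2,a']$. Hence $\chi'$ and $\chi''$ are supported in $[a',b']\subset(a,b)$.

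Let $f_0\in\operatorname{H}_\sigma$. We build two controlled trajectories.
\begin{enumerate}
\item Let $f^1$ be the trajectory driven to zero at time $T$ by a control supported in $(a',\pi/2)\times[0,2\pi)$, given by the moment method.
\item Let $f^2$ be a trajectory driven to zero at time $T$ by a control supported in $(-b',b')\times[0,2\pi)$, given by Theorem~\ref{thm::intrinsic SBG non-degenerate} applied with $b'$.
\end{enumerate}
Set
\[
f:=\chi f^1+(1-\chi)f^2.
\]
Then $f(0)=f_0$ and $f(T)=0$. Since $\chi$ does not depend on $y$, a direct computation gives
\[
\partial_t f-\Delta_{\operatorname{BG}}f=\chi u^1\mathbf 1+(1-\chi)u^2\mathbf 1-\Big(\chi''+\chi'\tan x-\ \text{(sign conventions)}\Big)(f^1-f^2)-2\chi'\partial_x(f^1-f^2),
\]
where the $\chi'\tan x$ term comes from $\frac{1}{\cos x}\partial_x(\cos x\,\partial_x)$ producing $\chi''+\chi'\cdot(-\tan x)$ plus the cross term $2\chi'\partial_x$. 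Each term on the right is supported in $(a,b)\times[0,2\pi)$:
\begin{itemize}
\item $\chi u^1$ is supported in $(a',\pi/2)\cap\{\chi\neq0\}$, but $\chi\equiv1$ on $[b',\pi/2)$, which lies outside $(a,b)$.
\end{itemize}
This is the main obstacle. The naive combination fails, because $\chi u^1$ is not localized in $(a,b)$ near the pole.

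The fix is to reverse the roles of the cut-off:
\begin{itemize}
\item use $f^2$ (the $\Gamma_b$ trajectory) near the south side and the equator,
\item use $f^1$ near the north pole,
\item and arrange that the north-pole control $u^1$ never acts above $b$.
\end{itemize}
A cleaner route is a two-stage construction. First steer $f_0$ with the moment control on $(a',\pi/2)$ on $[0,T_1]$, where $\ln(1/\cos a')<T_1<T$; then on $[T_1,T]$ handle the remaining pieces. Even so, the moment control still lives near the pole.

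The most robust plan is to apply the cut-off to the free solution. Let $g$ solve the uncontrolled equation, and set
\[
f=(1-\chi)\,h+\chi\,\theta(t)\,g,
\]
where $\theta$ is a time cut-off with $\theta=1$ near $0$ and $\theta=0$ near $T$. On $\{x>b'\}$ the error term is $\theta'\chi g$, which is not localized either. So at the level of full-domain functions, any localization of the polar region $(b,\pi/2)$ needs a control there.

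I therefore expect the actual argument to use the \emph{region above $b$ inherits controllability from} $(a,b)$ through the following duality step. Prove observability from $(a,b)$ by combining:
\begin{itemize}
\item the observability from $(a',\pi/2)$ in time $T$ (dual of the moment result),
\item the observability from $(-b',b')$ in short time,
\end{itemize}
with the cut-off applied to the adjoint solution $\varphi$. Write $\varphi=\chi\varphi+(1-\chi)\varphi$. The function $(1-\chi)\varphi$ solves the adjoint equation on the domain $x<b'$ with a source supported in $[a',b']$. The function $\chi\varphi$ lives in $x>a'$, with the polar cap $x>b$ observed only through the energy it receives from the commutator.

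The observation of $\chi\varphi$ on $(a',\pi/2)$ then splits into a part on $(a',b)$ and a part on $(b,\pi/2)$. The latter is controlled through interior energy estimates on the cap $(b',\pi/2)$: there the equation is uniformly parabolic in the sense of each Fourier mode with the pole boundary condition. The local $L^2$ norm on the cap is bounded by the data on a collar $(a',b)$ via a Carleman estimate on $(a',\pi/2)$ with weight increasing toward the pole, uniform in $n$, since $\tan^2x$ is monotone there and the Hardy--Poincaré inequality handles the pole. Finally the commutator terms, supported in $[a',b']$, are absorbed using the Carleman weights of Proposition~\ref{pro::uniform Carleman estimate}.

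This uniform-in-$n$ estimate on the cap is the step I expect to be hardest.
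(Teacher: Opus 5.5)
Your proposal does not reach a proof. You correctly see that the gluing $f=\chi f^1+(1-\chi)f^2$ fails because $\chi u^1$ stays active in the polar cap. It also fails on the other side, since $(1-\chi)u^2$ is active on $(-b',a']$. But your announced ``fix'' keeps exactly the same arrangement: $f^2$ near the south and the equator, $f^1$ near the north pole. You then fall back on a duality scheme whose key step, a Carleman estimate on the cap $(b',\pi/2)$ uniform in $n$, you neither prove nor need. Your intermediate claim that any localization of the cap $(b,\pi/2)$ requires a control there is false, and it is what led you away from the simple argument.

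The missing idea is to actually swap the roles. Keep your $\chi$ (so $\chi\equiv0$ on $[-\pi/2,a']$ and $\chi\equiv1$ on $[b',\pi/2]$) and set $f:=(1-\chi)f^1+\chi f^2$. Then $f(0)=f_0$, $f(T)=0$, and
\[
\partial_t f-\Delta_{\operatorname{BG}}f=(1-\chi)u^1+\chi u^2+[\Delta_{\operatorname{BG}},\chi](f^1-f^2),\qquad [\Delta_{\operatorname{BG}},\chi]z=2\chi'\partial_x z+(\chi''-\tan x\,\chi')z .
\]
Each source term is supported in $[a',b']\subset(a,b)$:
\begin{itemize}
\item $(1-\chi)u^1$ vanishes for $x\ge b'$, and $u^1$ vanishes for $x\le a'$, so it lives in $(a',b')$.
\item $\chi u^2$ vanishes for $x\le a'$, and $u^2$ vanishes for $x\ge b'$, so it also lives in $(a',b')$.
\item The commutator lives on $\operatorname{supp}\chi'\subset[a',b']$.
\end{itemize}
The point is that the trajectory steered from the equatorial strip is uncontrolled in the northern cap. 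Likewise, the trajectory steered from the northern strip is uncontrolled south of $a'$. So neither extreme region needs any control.

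The commutator term is in $L^2(0,T;\operatorname{H}_\sigma)$ for the following reason. Since $f^1,f^2\in L^2(0,T;\mathcal V_\sigma)$, the quadratic-form identity gives $\partial_x(f^1-f^2)\in L^2$. Moreover, $\chi'$, $\chi''$ and $\tan x\,\chi'$ are bounded on $[a',b']$, which stays away from the poles.

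This is exactly the paper's argument. Your extra choice of $a'$ with $T>\ln(1/\cos a')$ is harmless but unnecessary: with the swapped gluing, a control supported in $(a,\pi/2)$ is already cut down to $(a,b')$.
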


\begin{proof}
Fix $0\le a<b<\pi/2$ and $T>\ln(1/\cos a)$. Choose two numbers $a<\alpha<\beta<b$,
and let $\chi\in C^\infty([-\pi/2,\pi/2])$ satisfy
\[
\chi(x)=0 \quad\text{for }x\le \alpha,
\qquad
\chi(x)=1 \quad\text{for }x\ge \beta .
\]
Then
\[
\operatorname{supp}\chi' \subset (\alpha,\beta)\subset(a,b).
\]
Let $f_0\in L^2(\Omega;\sigma)$. Since $T>\ln(1/\cos a)$,
Proposition~\ref{pro::proposition MM} yields the null controllability of
\eqref{eq::spherical parabolic BG equation} from
\[
\omega_R:=(a,\pi/2)\times[0,2\pi).
\]
Hence there exists a control
\[
u_R\in L^2(0,T;L^2(\Omega;\sigma)),
\qquad
\operatorname{supp} u_R \subset (0,T)\times \omega_R,
\]
such that the corresponding solution $f^R$ of
\[
\begin{cases}
\partial_t f^R-\Delta_{\operatorname{BG}}f^R=u_R\mathbf 1_{\omega_R},
&\text{in }(0,T)\times\Omega,\\
f^R|_{t=0}=f_0,
\end{cases}
\]
satisfies $f^R(T,\cdot,\cdot)=0$. By Theorem~\ref{thm::intrinsic SBG non-degenerate}, system
\eqref{eq::spherical parabolic BG equation} is null controllable in every
positive time from the symmetric strip
\[
\Gamma:=(-b,b)\times[0,2\pi).
\]
In particular, it is null controllable from $\Gamma$ in time $T>\ln(1/\cos a)$. Therefore,
there exists a control
\[
u_L\in L^2(0,T;L^2(\Omega;\sigma)),
\qquad
\operatorname{supp} u_L \subset (0,T)\times \Gamma,
\]
such that the corresponding solution $f^L$ of
\[
\begin{cases}
\partial_t f^L-\Delta_{\operatorname{BG}}f^L=u_L\mathbf 1_{\Gamma},
&\text{in }(0,T)\times\Omega,\\
f^L|_{t=0}=f_0,
\end{cases}
\]
satisfies $f^L(T,\cdot,\cdot)=0$. Define
\[
f:=(1-\chi)f^R+\chi f^L.
\]
Since $\chi$ depends only on $x$, one has
\[
f|_{t=0}=f_0
\qquad\text{and}\qquad
f(T,\cdot,\cdot)=0.
\]
Using $\Delta_{\operatorname{BG}}
=
\partial_x^2-\tan x\,\partial_x+\tan^2x\,\partial_y^2$,
one checks that, for every smooth function $z$,
\[
[\Delta_{\operatorname{BG}},\chi]z
:=
\Delta_{\operatorname{BG}}(\chi z)-\chi \Delta_{\operatorname{BG}}z
=
2\chi'(x)\partial_x z+\bigl(\chi''(x)-\tan x\,\chi'(x)\bigr)z .
\]
Therefore, in the sense of distributions on $(0,T)\times\Omega$, $\partial_t f-\Delta_{\operatorname{BG}}f=u$,
where
\begin{align*}
u
&:=(1-\chi)\,u_R\mathbf 1_{\omega_R}
   +\chi\,u_L\mathbf 1_{\Gamma}
   +[\Delta_{\operatorname{BG}},\chi](f^R-f^L) \\
&=(1-\chi)\,u_R\mathbf 1_{\omega_R}
   +\chi\,u_L\mathbf 1_{\Gamma}
   +2\chi'(x)\partial_x(f^R-f^L)
   +\bigl(\chi''(x)-\tan x\,\chi'(x)\bigr)(f^R-f^L).
\end{align*}
It remains to check that $u$ belongs to $L^2(0,T;L^2(\Omega;\sigma))$ and is
supported in $(0,T)\times\omega_{a,b}\times[0,2\pi)$. The support property follows
from the following three facts:
\begin{itemize}
\item[(i)] $(1-\chi)\,u_R\mathbf 1_{\omega_R}$ is supported in
$(a,\beta)\times[0,2\pi)\subset \omega_{a,b}\times[0,2\pi)$, since
$\chi\equiv1$ on $[\beta,\pi/2]$;
\item[(ii)] $\chi\,u_L\mathbf 1_{\Gamma}$ is supported in
$(\alpha,b)\times[0,2\pi)\subset \omega_{a,b}\times[0,2\pi)$, since
$\chi\equiv0$ on $[-\pi/2,\alpha]$ and $\Gamma=(-b,b)\times[0,2\pi)$;
\item[(iii)] the commutator term is supported in
$\operatorname{supp}\chi' \subset (\alpha,\beta)\subset(a,b)$.
\end{itemize}
To justify the $L^2$ regularity of the commutator term, we use
Proposition~\ref{pro::well-posedness}, which yields $f^R,f^L\in C([0,T];\operatorname H_\sigma)\cap L^2(0,T;\mathcal V_\sigma)$. By density of smooth functions in $V_\sigma$, identity~\eqref{eq::quadratic-form-BG} extends to every element of $V_\sigma$.
Since $\operatorname{supp}\chi'\cup\operatorname{supp}\chi''\subset(\alpha,\beta)\subset\subset(-\pi/2,\pi/2)$, the
functions $\chi'$, $\chi''$, and $\tan x\,\chi'$ are bounded on their support.
Moreover, by identity~\eqref{eq::quadratic-form-BG}, valid for all elements of $V_\sigma$, the quantity $\partial_x(f^R-f^L)$ belongs to $L^2\bigl((0,T)\times(\alpha,\beta)\times(0,2\pi);\sigma\bigr)$. Hence
\[
2\chi'(x)\partial_x(f^R-f^L)\in L^2(0,T;L^2(\Omega;\sigma))\quad\text{and}\quad \bigl(\chi''(x)-\tan x\,\chi'(x)\bigr)(f^R-f^L)\in L^2(0,T;L^2(\Omega;\sigma)),
\]
because $f^R-f^L\in C([0,T];\operatorname H_\sigma)\subset
L^2(0,T;\operatorname H_\sigma)$. Therefore $u\in L^2(0,T;L^2(\Omega;\sigma))$. Thus $u$ is an admissible control supported in
$(0,T)\times\omega_{a,b}\times[0,2\pi)$ which steers the solution of
\eqref{eq::spherical parabolic BG equation} from $f_0$ to $0$ in time $T$.
\end{proof}

\appendix

\section{Proof of the uniform Carleman estimates when the degeneracy is inside the control set}\label{s::UCE}
This section is dedicated to proving Proposition~\ref{pro::uniform Carleman estimate}. We follow the same line as in the proof of \cite[Proposition 4.1]{tamekue2022null}. For clarity, we introduce the following notations.
\begin{notation}
	We let $b'$ be a real number such that
	\begin{equation}\label{eq::parameters}
		0<b'<b\le\pi/2\qquad\mbox{and}\qquad [-b',b']\subset\Gamma_b:=(-b,b).
	\end{equation}
	We consider the subdomains
	\begin{equation}\label{eq::subdomains}
		\operatorname{\omega}_{con}:=(-b',b'),\qquad\qquad\operatorname{\omega}_{bdy}:=\left(-\pi/2,-b'\right)\cup\left(b',\pi/2\right),
	\end{equation}
	so that $$(-\pi/2,\pi/2) = \operatorname{\omega}_{bdy}\cup\operatorname{\omega}_{con}\qquad\qquad\mbox{and}\qquad\qquad \operatorname{\omega}_{con}\subset\subset\Gamma_b.$$
		We also introduce the weight function
	\begin{equation}\label{eq51}
		\varphi(t,x) = s\theta(t)\beta(x),\qquad\qquad(t,x)\in Q:=(0,T)\times I,\qquad I:=(-\pi/2,\pi/2),
	\end{equation}
	where the positive constant $s = s(T,\beta)>0$ will be chosen later on and the temporal weight $\theta$ is given by
	\begin{equation}\label{eq52}
		\theta(t) = \frac{1}{t(T-t)},\qquad\qquad t\in(0,T).
	\end{equation}
	Finally, we introduce for all $n\in\N^{*}$ and every $g\in C([0,T]; L^2(-\pi/2,\pi/2))\cap C^{2}((0,T);D(\operatorname{M_{n}}))$,  the change of function
	\begin{equation}\label{eq53}
		z(t,x) = g(t,x)e^{-\varphi(t,x)},\qquad (t,x)\in Q.
	\end{equation}
\end{notation}
We begin with the design of the weight function $\beta$ as depicted in Figure~\ref{fig:: uniform Carleman weight beta}.

\begin{lemma}\label{lem::uniform carleman weight beta}
	The function $\beta\in C^{4}([-\pi/2,\pi/2])$ satisfies
    \begin{align}
    \beta(x) = \ln|\sin x|+2|x|+2\qquad\qquad x\in[-\pi/2,-b']\cup[b',\pi/2],\label{eq::on -pi/2, -b'}\\
    \beta'(x)\operatorname{sign}(x)\ge 0\qquad\qquad\qquad\qquad x\in[-b',-b'/2]\cup[b'/2,b'],\\
    \beta(x) = 1   \qquad\qquad\qquad\qquad\qquad\qquad\qquad  x\in[-b'/2, b'/2].
\end{align}
	
	Consequently, it holds
    \begin{align}
        \beta\ge 1,\qquad\qquad\qquad\mbox{on}\qquad\qquad\qquad(-\pi/2,\pi/2),\label{eq::beta on -pi/2, pi/2}\\
        \beta'(x)\sin x\ge 0  \qquad\qquad\qquad\qquad\qquad\qquad  x\in[-b',b'].\label{eq::key properties of beta in omega_con}
    \end{align}

\end{lemma}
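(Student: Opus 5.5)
The plan is to build $\beta$ explicitly on $[0,\pi/2]$ and extend it evenly. Set
\[
h(x):=\ln(\sin x)+2x+2,\qquad x\in(0,\pi/2].
\]
On $[b',\pi/2]$ we are forced to take $\beta=h$, and on $[0,b'/2]$ we take $\beta\equiv1$. The work is to interpolate on $[b'/2,b']$ monotonically and $C^4$-smoothly. We then set $\beta(-x)=\beta(x)$. Since $\beta$ is constant near $0$, the even extension is $C^\infty$ there. Near $\pm\pi/2$ the function $h$ is smooth, because $\sin x$ stays away from $0$, so $\beta\in C^4([-\pi/2,\pi/2])$.

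For the interpolation I would prescribe the derivative rather than $\beta$ itself:
\[
\beta(x):=1+\int_{b'/2}^{x}\bigl(\psi(s)h'(s)+\kappa(s)\bigr)\,ds,\qquad x\in[b'/2,b'].
\]
Here $\psi\in C^\infty$ satisfies $0\le\psi\le1$, $\psi=0$ near $b'/2$, and $\psi=1$ on a neighbourhood $[b'-\delta,b']$ of $b'$. The function $\kappa\ge0$ is smooth and compactly supported in $(b'/2,b'-\delta)$. Since $h'(x)=\cot x+2>0$ on $(0,\pi/2]$, the integrand is nonnegative, so $\beta'\ge0$ on $[b'/2,b']$. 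This is the required sign condition, and by evenness $\beta'(x)\operatorname{sign}(x)\ge0$ on $[-b',-b'/2]\cup[b'/2,b']$.

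Near $b'/2$ the integrand vanishes, so $\beta$ glues $C^\infty$ to the constant $1$. Near $b'$ the derivative equals $h'$, so $\beta-h$ is constant on $[b'-\delta,b']$. We choose the mass of $\kappa$ so that this constant is zero:
\[
\int_{b'/2}^{b'}\kappa=h(b')-1-\int_{b'/2}^{b'}\psi h'.
\]
Then $\beta\equiv h$ on a full neighbourhood of $b'$, and all derivatives match. Because $\int\psi h'\le\int_{b'-2\delta}^{b'}h'\to0$ as $\delta\to0$ (taking $\psi$ supported near $b'$), the right-hand side is positive for small $\delta$ as soon as $h(b')>1$.

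For the consequences, $\beta'\sin x\ge0$ on $[-b',b']$ follows from the sign property together with $\beta'=0$ on $[-b'/2,b'/2]$. The bound $\beta\ge1$ holds on $[0,b']$ by monotonicity from $\beta(b'/2)=1$. On $[b',\pi/2]$ we have $\beta=h$ with $h$ increasing, so $\beta\ge h(b')>1$.

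The main obstacle is the compatibility condition
\[
h(b')=\ln\sin b'+2b'+2\ge1.
\]
It holds only if $b'$ is not too small; the monotonicity requirement forces $h(b')\ge\beta(b'/2)=1$. Since $h$ is increasing, I would choose $b'<b$ close to $b$ so that $\ln\sin b'+2b'+1>0$. If $b$ itself is too small for this, I would replace the constant $2$ in $h$ by a larger constant $c$. This leaves all later Carleman computations unchanged, since they only involve $\beta'$ and $\beta''$, and it only shifts $\beta^*$ and $\beta_*$.
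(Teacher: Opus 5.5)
The paper gives no argument for this lemma beyond the figure of $\beta$, so your explicit construction is a genuinely different route, and in fact the only actual proof available. It is correct. Prescribing $\beta'=\psi h'+\kappa\ge0$ on $[b'/2,b']$ makes $\beta$ locally constant near $b'/2$ and equal to $h$ plus a constant near $b'$. The mass of $\kappa$ removes that constant. The even extension then delivers the sign condition, $\beta'\sin x\ge0$ on $[-b',b']$, and $\beta\ge1$ all at once; on $[b',\pi/2]$ the last one holds because $h'=\cot x+2>0$.

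What your construction adds beyond the picture is the compatibility condition. This is a correction to the statement, not a limitation of your method, and in sharp form it is necessary. Indeed, $\beta$ is nondecreasing on $[b'/2,b']$ with $\beta(b'/2)=1$ and $\beta'(b')=h'(b')>0$. So any $C^1$ function as in the lemma forces $h(b')>1$, i.e. $\ln\sin b'+2b'+1>0$. This fails for every $b'\le b_0$, where $b_0\approx0.233$ is the root of $\ln\sin x+2x+1=0$. Consequences:
\begin{itemize}
\item For $b\le b_0$ the lemma as printed is false, even though arbitrary-time observability from $(-b,b)$ is claimed for all $0<b\le\pi/2$.
\item For $b>b_0$ one must take $b'\in(b_0,b)$, which the paper's choice of $b'$ does not impose.
\end{itemize}

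Replacing the additive constant $2$ by a large $c$ is the right fix, but your justification is inaccurate. It is not true that later computations involve only $\beta'$ and $\beta''$: $\beta$ itself enters the Carleman identity through the term $-\tfrac{s}{2}\theta''\beta|z|^2$, which comes from $\partial_t^2\varphi$ and appears in both $\operatorname{Q_{bdy}}$ and $\operatorname{Q_{con}}$. It is harmless, because $|\theta''|\le CT^2\theta^3$ lets the $s^3\theta^3|z|^2$ terms absorb it once $s\ge CT\sqrt{\max\beta}$. The threshold $C_0$ then depends on $c$, hence on $b$, which the uniform Carleman proposition allows since its constants are $C_i(\beta,b)$.
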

\begin{figure}
	\centering
	\includegraphics[width=0.45\textwidth]{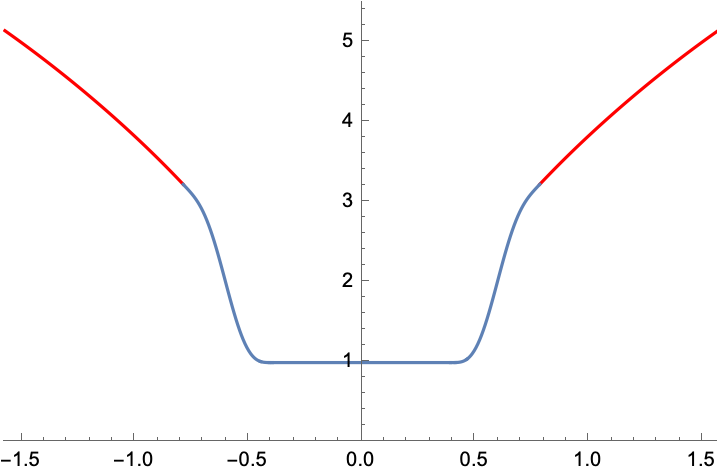}
	\caption{ The weight $\beta$ depicted on $[-\pi/2,\pi/2]$. The curves parts in \textit{blue} and \textit{red} correspond respectively to the subcontrol region $\operatorname{\omega}_{con}$ containing the degeneracy in its interior and the boundary domain $\operatorname{\omega}_{bdy}$ containing the singular points $\pm\pi/2$.}
	\label{fig:: uniform Carleman weight beta}
\end{figure}
\begin{remark}
	The weight $\beta$ in Lemma~\ref{lem::uniform carleman weight beta} cannot be used if the control set contains the degeneracy on its boundary, say, $\Gamma_b:= (0, b)$ with $0<b\le\pi/2$. Indeed, the expression of $\beta$ given by  \eqref{eq::on -pi/2, -b'} explodes at $0$.
\end{remark}

We let 
	\begin{equation}\label{eq59}
		\cP_n^{+}z+\cP_n^{-}z = e^{-\varphi}\cP_ng,
	\end{equation}
	where $\cP_n$ is the parabolic operator introduced in \eqref{eq::singular 1D parabolic operator}, and
	\begin{equation}\label{eq60}
		\cP_n^{+}z = -\operatorname{M_n}z+(\partial_{t}\varphi-|\partial_x\varphi|^2)z\qquad\mbox{and}\qquad\cP_n^{-}z = \partial_{t}z-2\partial_x z\partial_x\varphi-(\partial_x^2\varphi)z.
	\end{equation}
Let $Q=(0,T)\times (-\pi/2,\pi/2)$ and $dQ = dxdt$. Observe first that, $\cP_n^{+}z$ and $\cP_n^{-}z$ defined in \eqref{eq60} belong to $L^2(Q)$ by the definition of $D(\operatorname{M_{n}})$ and Lemma~\ref{lem::proprieties in D(M_n)}. So, developing the $L^2(Q)$ squared norm in identity \eqref{eq59},  leads to
\begin{equation}\label{eq61}
	\int_{Q}\cP_n^{+}z\cP_n^{-}zdQ\le\frac{1}{2}\int_{Q}\left|e^{-\varphi}\cP_ng\right|^{2}dQ.
\end{equation}

Straightforward computations using \cite[Lemmas 4.6 to 4.8]{tamekue2022null} lead to the following.

\begin{lemma}\label{lem::uniform Carleman scalar product}
	Let $n\in\N^{*}$. Then one has
    \begin{equation}\label{eq::scalar product}
\int_{Q}\cP_n^{+}z\cP_n^{-}zdQ=\int_{0}^{T}\int_{\operatorname{\omega}_{bdy}}\operatorname{Q_{bdy}}dQ+\int_{0}^T\int_{-b'}^{b'}\operatorname{Q_{con}}dQ+s\left(2n^2-\frac{1}{2}\right)\int_{0}^T\int_{-b'}^{b'}\theta(t)\frac{\beta'(x)\sin x}{\cos^3x}|z|^2dQ,
    \end{equation}
	where
	\begin{eqnarray}\label{eq::Q bdy}
		\operatorname{Q_{bdy}}&=&\frac{2s\theta}{\sin^2x}\left\{|\partial_x z|^2+|z|^2\right\}+\frac{2s^3\theta^3}{\sin^2x}\left(\frac{\cos x}{\sin x}+2\operatorname{sign}(x)\right)^2|z|^2\nonumber\\
		&&+\left\{2s^2\theta\theta'\left(\frac{\cos x}{\sin x}+2\operatorname{sign}(x)\right)^2-\frac{s\theta''}{2}(\ln|\sin x|+2x|+2)\right\}|z|^2\nonumber\\
		&&+\left\{-\frac{3s\theta}{\sin^4x}+\frac{s\theta(2n^2-1/2)}{\cos^2x}(1+2\operatorname{sign}(x)\tan x)\right\}|z|^2,
	\end{eqnarray}
	and
	\begin{equation}\label{eq::Q con}
		\operatorname{Q_{con}}=s\left\{\frac{\theta\beta^{(4)}}{2}-\frac {\theta''\beta}2+2s\theta\beta'^2(\theta'-s\theta^2\beta'')\right\}|z|^2-2s\theta\beta''|\partial_x z|^2.
	\end{equation}
\end{lemma}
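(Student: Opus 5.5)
I will prove the identity by expanding the product $\cP_n^{+}z\,\cP_n^{-}z$ into elementary pieces, integrating each by parts in $x$ or $t$, and then specializing the weight $\beta$ on the two subdomains $\operatorname{\omega}_{bdy}$ and $\operatorname{\omega}_{con}$. With $\varphi=s\theta\beta$ and $\cP_n^{+}z=-\partial_x^2z+q_nz+(\partial_t\varphi-|\partial_x\varphi|^2)z$, I will organize the scalar product as the sum of the nine cross terms. These are $-\partial_x^2z$, $q_nz$, and $(\partial_t\varphi-|\partial_x\varphi|^2)z$, each multiplied against $\partial_tz$, $-2\partial_x\varphi\,\partial_xz$, and $-(\partial_x^2\varphi)z$. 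These are exactly the computations packaged in \cite[Lemmas 4.6 to 4.8]{tamekue2022null}. I will reuse them in their general form, valid for an arbitrary $C^4$ weight $\beta$ on each subinterval, before inserting the explicit $\beta$ of Lemma~\ref{lem::uniform carleman weight beta}.

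The first step is the justification of the integrations by parts. Boundary terms in $t$ vanish because $z=ge^{-s\theta\beta}$ and $\theta\to+\infty$ at $t=0,T$ with $\beta\ge1$, so $z$ and its derivatives decay superexponentially. Boundary terms in $x$ at $\pm\pi/2$ vanish thanks to Lemma~\ref{lem::proprieties in D(M_n)}, which gives $z,\partial_xz=o(1)$, combined with the boundedness of $\beta'=\cot x+2\operatorname{sign}x$ near $\pm\pi/2$. On $\operatorname{\omega}_{bdy}$ the weight is smooth up to $\pm\pi/2$, so the terms are handled directly. The interior points $\pm b'$ produce no boundary contributions because $\beta\in C^4$ and the integration is over the whole of $I$. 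The split into $\operatorname{\omega}_{bdy}$ and $(-b',b')$ is performed only after all integrations by parts.

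The generic outcome, for $\beta$ arbitrary, is
\[
\int_Q\Big[-2s\theta\beta''|\partial_xz|^2-2s^3\theta^3\beta'^2\beta''|z|^2+2s^2\theta\theta'\beta'^2|z|^2-\tfrac{s\theta''}{2}\beta|z|^2+\tfrac{s\theta}{2}\beta^{(4)}|z|^2-s\theta\beta'q_n'|z|^2\Big]dQ .
\]
Here the $q_n$ cross term is $\int q_nz(-2\partial_x\varphi\partial_xz-\partial_x^2\varphi z)=\int\partial_x\varphi\, q_n'|z|^2$, up to sign. On $(-b',b')$ the quadratic part is exactly $\operatorname{Q_{con}}$. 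Since $q_n'=(2n^2-\tfrac12)\sin x/\cos^3x$, the potential term gives precisely the last term in \eqref{eq::scalar product}, which is nonnegative by \eqref{eq::key properties of beta in omega_con}.

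On $\operatorname{\omega}_{bdy}$ I will substitute $\beta=\ln|\sin x|+2|x|+2$. This gives $\beta'=\cot x+2\operatorname{sign}x$, $\beta''=-1/\sin^2x$, $\beta^{(4)}$ computed explicitly, and $-\beta'q_n'=(2n^2-\tfrac12)\cos^{-2}x\,(1+2\operatorname{sign}(x)\tan x)$ after simplification. Regrouping produces $\operatorname{Q_{bdy}}$. The $-3s\theta/\sin^4x$ and $2s\theta|z|^2/\sin^2x$ pieces come from $\beta^{(4)}/2=(3-\sin^2x\cdot\ldots)/\sin^4x$ type terms, which I will expand carefully. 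The main obstacle is this bookkeeping, in particular the sign of the $n$-dependent potential term and the exact splitting of $\beta^{(4)}$. I also have to make sure no boundary term at $\pm\pi/2$ survives in the product $\partial_xz\,\partial_tz$ and in the term involving $q_n|z|^2\partial_x\varphi$. For the latter, $q_n$ blows up like $\cos^{-2}x$, so I will need the finer decay of $w\in D(\operatorname{M_n})$, namely $q_n|w|^2\to0$, which I expect to extract from the proof of \cite[Lemma 3.7]{tamekue2022null}.
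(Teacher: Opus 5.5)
Your plan is the computation the paper relies on: the paper only invokes the calculations of Lemmas 4.6--4.8 in \cite{tamekue2022null}. Your generic identity is right in every term except one, the potential term, and there your sign is wrong. You should not leave it ``up to sign'', because this is exactly the sign that Remark~\ref{rmk::key uniform} exploits. Indeed,
\[
\int q_nz\bigl(-2\partial_x\varphi\,\partial_xz-\partial_x^2\varphi\,z\bigr)\,dx=-\int q_n\partial_x\varphi\,\partial_x(z^2)\,dx-\int q_n\partial_x^2\varphi\,z^2\,dx=\int q_n'\,\partial_x\varphi\,z^2\,dx ,
\]
up to vanishing boundary terms. So the contribution is $+s\theta\beta'q_n'|z|^2$, not $-s\theta\beta'q_n'|z|^2$.

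\begin{itemize}
\item On $(-b',b')$, the plus sign is what produces the last term of \eqref{eq::scalar product}.
\item On $\operatorname{\omega}_{bdy}$, it is $+\beta'q_n'$, not $-\beta'q_n'$, that equals $(2n^2-\tfrac12)\cos^{-2}x\,(1+2\operatorname{sign}(x)\tan x)$.
\end{itemize}

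As written, your two specializations contradict each other: the first needs the plus sign, and the second compensates with a false identity.

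The $\beta^{(4)}$ bookkeeping also needs correcting. On $\operatorname{\omega}_{bdy}$ one has $\beta^{(4)}=4/\sin^2x-6/\sin^4x$. Hence $\tfrac{s\theta}{2}\beta^{(4)}=\tfrac{2s\theta}{\sin^2x}-\tfrac{3s\theta}{\sin^4x}$ exactly, with the $3$ entering negatively, not as $(3-\dots)/\sin^4x$.

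For the boundary terms at the poles you need nothing beyond the statement of Lemma~\ref{lem::proprieties in D(M_n)}. Since $e^{-\varphi}$ is smooth and bounded near $\pm\pi/2$, $z$ tends to $0$ there and $\partial_xz=o(1)$. Hence, near $\pi/2$,
\[
|z(t,x)|\le\int_x^{\pi/2}|\partial_xz(t,\xi)|\,d\xi=o(\pi/2-x),
\]
and similarly near $-\pi/2$. Therefore $\tan^2x\,|z|^2\to0$, and $q_n\partial_x\varphi\,|z|^2$ vanishes at the endpoints.

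For full rigor, do not integrate each of the nine pieces by parts over the whole interval. The term $s\theta\beta'q_n'|z|^2$ behaves like $\cos^{-3}x\,|z|^2$ near the poles, and $z=o(\cos x)$ alone does not make it integrable. Instead, integrate by parts on $(-\pi/2+\epsilon,\pi/2-\epsilon)$ and let $\epsilon\to0$:
\begin{itemize}
\item the left side converges because $\cP_n^{\pm}z\in L^2(Q)$;
\item the boundary terms vanish by the estimate above;
\item the singular potential term is nonnegative near the poles, so monotone convergence applies.
\end{itemize}
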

\begin{remark}\label{rmk::key uniform}
	Thanks to \eqref{eq::key properties of beta in omega_con}, we can bound from below the third term in the right-hand side of \eqref{eq::scalar product} by $0$, for every $n\in\N^{*}$.
\end{remark}
Observe that $\operatorname{Q_{bdy}}$ is identical with $\operatorname{K_{bdy}}$ defined in \cite[eq. (4.23)]{tamekue2022null} with $A_1 = A_2=2$. Therefore, the same estimates provided in \cite[Lemma 4.11]{tamekue2022null} hold true for $\operatorname{Q_{bdy}}$, where positive constants involved in these inequalities depend only on $b'$.
\begin{lemma}\label{lem::uniform Carleman Q bdy}
	There exists a positive constant $C_0:=C_0(b')>0$ such that, for all
	\begin{equation}\label{eq::uniform Carleman s_2}
		s\ge C_0(T+T^2),
	\end{equation}
	the following inequality holds
	\begin{equation}\label{eq::uniform Carleman Q bdy}
		\int_{0}^{T}\int_{\operatorname{\omega}_{bdy}}\operatorname{Q_{bdy}}dQ\ge \int_{0}^{T}\int_{\operatorname{\omega}_{bdy}}2s\theta|\partial_x z|^2+2s\theta|z|^2+2s^3\theta^3|z|^2dQ.
	\end{equation}
\end{lemma}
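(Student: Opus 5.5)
The plan is to reduce the lemma to the boundary estimate \cite[Lemma~4.11]{tamekue2022null}, since $\operatorname{Q_{bdy}}$ coincides with $\operatorname{K_{bdy}}$ there when $A_1=A_2=2$. For a self-contained argument, the idea is to split $\operatorname{Q_{bdy}}$ into its good terms and its error terms, and to show that for $s\ge C_0(T+T^2)$ the error terms are absorbed by only part of the good terms. Writing $h(x):=\frac{\cos x}{\sin x}+2\operatorname{sign}(x)$, the good terms are the first two lines of \eqref{eq::Q bdy}, namely $\frac{2s\theta}{\sin^2x}\{|\partial_xz|^2+|z|^2\}+\frac{2s^3\theta^3}{\sin^2x}h^2|z|^2$. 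On $\operatorname{\omega}_{bdy}$ we have $b'<|x|<\pi/2$, so $\sin^2x\le 1$ and $|h|\ge 2$ (the two summands have the same sign). The good terms therefore dominate $2s\theta|\partial_xz|^2+2s\theta|z|^2+8s^3\theta^3|z|^2$, which leaves $6s^3\theta^3|z|^2$ available to absorb the remaining terms.

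The error terms are treated one by one, using $\theta'=\theta^2(2t-T)$, so that $|\theta'|\le T\theta^2$ and $|\theta''|\le C T^2\theta^3$.
\begin{itemize}
\item The term $2s^2\theta\theta'h^2$ satisfies $|2s^2\theta\theta'h^2|\le 2s^2T\theta^3 h^2$. Comparing with $\frac{2s^3\theta^3}{\sin^2x}h^2$, it is absorbed by half of that good term as soon as $s\ge 2T$.
\item The term $\frac{s}{2}|\theta''|\beta$ is bounded by $CsT^2\theta^3$, since $\beta$ is bounded on $[b',\pi/2]$ by a constant depending on $b'$. It is absorbed by $s^3\theta^3$ once $s^2\ge CT^2$, which holds if $s\ge C(b')T$.
\item The term $-\frac{3s\theta}{\sin^4x}$ is bounded by $3s\theta/\sin^4b'$. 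Since $\theta\ge 4/T^2$, we have $s\theta\le s^3\theta^3\,T^4/(16s^2)$, so this term is absorbed as soon as $s\ge C(b')T^2$.
\end{itemize}
These three conditions together are exactly of the form $s\ge C_0(T+T^2)$.

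The main obstacle is the $n$-dependent term $\frac{s\theta(2n^2-1/2)}{\cos^2x}(1+2\operatorname{sign}(x)\tan x)$: it is not bounded uniformly in $n$, and it is singular at $\pm\pi/2$. The key observation is that on $\operatorname{\omega}_{bdy}$ we have $\operatorname{sign}(x)\tan x=|\tan x|\ge 0$, so $1+2\operatorname{sign}(x)\tan x\ge 1>0$. The term is therefore nonnegative for every $n\in\N^*$ and can simply be dropped. This is precisely why the weight $\beta$ is built from $2|x|$ with positive slope away from the origin.

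It remains to make sure that the integrals of the singular good terms near $\pm\pi/2$, and any boundary terms from the integrations by parts behind Lemma~\ref{lem::uniform Carleman scalar product}, are finite. I would argue this with Lemma~\ref{lem::proprieties in D(M_n)}, which gives $z,\partial_xz\to0$ at the poles, together with the Hardy--Poincaré inequality of \cite[Lemma~2.6]{tamekue2022null}, which controls $\int|z|^2/\cos^2x$ by $\int|\partial_xz|^2$, exactly as in \cite[Lemma~4.11]{tamekue2022null}. Collecting the absorbed terms then yields \eqref{eq::uniform Carleman Q bdy}, with $C_0$ depending only on $b'$.
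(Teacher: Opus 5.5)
Your proposal is correct and follows the paper. The paper proves this lemma only by identifying $\operatorname{Q_{bdy}}$ with $\operatorname{K_{bdy}}$ of \cite[eq.~(4.23)]{tamekue2022null} for $A_1=A_2=2$ and invoking \cite[Lemma~4.11]{tamekue2022null}; your pointwise absorption is a valid self-contained version of that cited estimate, using $|h|\ge 2$ on $\operatorname{\omega}_{bdy}$, the bounds $|\theta'|\le T\theta^2$, $|\theta''|\le CT^2\theta^3$, $\theta\ge 4/T^2$, and the nonnegativity of the $n$-dependent term since $\operatorname{sign}(x)\tan x\ge 0$ there. Your closing integrability paragraph is not needed: the only term singular at $\pm\pi/2$ is the nonnegative $n$-dependent term you drop, and Hardy--Poincar\'e would not control its $|\tan x|/\cos^2x$ part anyway, so the pointwise bound simply integrates in $[0,+\infty]$.
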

\begin{lemma}\label{lem::uniform Carleman Q con}
	Assume \eqref{eq::uniform Carleman s_2}. Then there exist positive constants $C_1, C_2>0$ such that the following inequality holds
	\begin{equation}\label{eq::uniform Carleman Q con}
		\int_{0}^T\int_{-b'}^{b'}|\operatorname{Q_{con}}|dQ\le \int_{0}^T\int_{-b'}^{b'}C_1s\theta|\partial_x z|^2+C_2s^3\theta^3|z|^2dQ.
	\end{equation}
\end{lemma}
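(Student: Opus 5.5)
The plan is to bound each coefficient of $\operatorname{Q_{con}}$ in \eqref{eq::Q con} pointwise on $(0,T)\times(-b',b')$. Each bound has the form (constant)$\times s\theta$ in front of $|\partial_x z|^2$ and (constant)$\times s^3\theta^3$ in front of $|z|^2$, with constants depending only on $\beta$ and therefore only on $b'$. Integrating over $(0,T)\times(-b',b')$ then gives \eqref{eq::uniform Carleman Q con}. Two ingredients are needed: uniform bounds on the spatial weight $\beta$ on $[-b',b']$, and elementary inequalities relating the time weight $\theta$ to its derivatives. The lower bound $s\ge C_0(T+T^2)$ from \eqref{eq::uniform Carleman s_2} converts all powers of $T$ into constants.

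\emph{Spatial bounds.} Since $\beta\in C^4([-\pi/2,\pi/2])$ by Lemma~\ref{lem::uniform carleman weight beta}, the quantities $\|\beta\|_\infty$, $\|\beta'\|_\infty$, $\|\beta''\|_\infty$ and $\|\beta^{(4)}\|_\infty$ on $[-b',b']$ are finite. They depend only on $b'$ through the construction of $\beta$. Note also that $[-b',b']$ stays away from the poles $\pm\pi/2$, so no singular factor appears here.

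\emph{Time weights.} Since $\theta(t)=1/(t(T-t))$, a direct computation gives
\[
\theta'=(2t-T)\theta^2,\qquad \theta''=2\theta^2+2(2t-T)^2\theta^3 .
\]
These yield $|\theta'|\le T\theta^2$ and $|\theta''|\le 2\theta^2+2T^2\theta^3$. Moreover $\theta\ge 4/T^2$ on $(0,T)$, which gives $\theta\le \tfrac{T^4}{16}\theta^3$ and $\theta^2\le \tfrac{T^2}{4}\theta^3$. Hence $|\theta''|\le C T^2\theta^3$.

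\emph{Term-by-term estimates.} The gradient term satisfies $|2s\theta\beta''|\,|\partial_x z|^2\le 2\|\beta''\|_\infty\, s\theta|\partial_x z|^2$, which fixes $C_1$. For the zero-order coefficients, I will use $s\ge C_0(T+T^2)$, which gives $T/s\le 1/C_0$ and $T^2/s\le 1/C_0$:
\[
s\theta|\beta^{(4)}|\le C\,s\theta\,\tfrac{T^4}{16}\theta^2\cdot\frac{s^2}{s^2}\le \frac{C}{C_0^2}\,s^3\theta^3 ,
\]
\[
s|\theta''|\beta\le C\,sT^2\theta^3\le \frac{C}{C_0^2}\,s^3\theta^3 ,
\]
\[
2s^2\theta|\theta'|\beta'^2\le C\,s^2T\theta^3\le \frac{C}{C_0}\,s^3\theta^3 ,
\]
\[
2s^3\theta^3\beta'^2|\beta''|\le C\,s^3\theta^3 .
\]
Summing these four bounds gives $C_2$.

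All of this is routine. The only point needing care is that the time-weight bounds are uniform in $t\in(0,T)$, and that the resulting constants are independent of $n$, $T$ and $z$. Independence from $n$ holds because the $n$-dependent term has already been separated off in \eqref{eq::scalar product}.
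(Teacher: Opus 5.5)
Your proposal is correct and follows essentially the same route as the paper. Both bound the coefficients of $\operatorname{Q_{con}}$ pointwise, term by term, on $(0,T)\times(-b',b')$, using sup-norms of $\beta,\beta',\beta'',\beta^{(4)}$ on $[-b',b']$ and the time-weight bounds $|\theta'|\le T\theta^2$, $|\theta''|\lesssim T^2\theta^3$, $\theta\ge 4/T^2$; the condition $s\ge C_0(T+T^2)$ then absorbs the powers of $T$, and $C_1=2\max_{[-b',b']}|\beta''|$ exactly as in the paper. Your explicit bookkeeping is, if anything, cleaner than the paper's. In particular, you track the $s\theta''\beta$ contribution (of order $\beta/C_0^2$), which does not visibly appear in the paper's stated $C_2$.
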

\begin{proof}
	Let $n\in\N^{*}$, then by \cite[Lemma 4.6]{tamekue2022null}, \eqref{eq::uniform Carleman s_2} and \eqref{eq::Q con} we have
	\begin{eqnarray*}
		|\operatorname{Q_{con}}|&\le&\left|\frac{s\theta\beta^{(4)}}{2}-\frac {s\theta''\beta}2+2s^2\theta\beta'^2(\theta'-s\theta^2\beta'')\right||z|^2+|2s\theta\beta''|| z|^2\nonumber\\
		&\le&C_1s\theta|\partial_x z|^2+C_2s^3\theta^3|z|^2,
	\end{eqnarray*}
	where $$C_1=C_1(b'):=2\max\{|\beta''(x)|: x\in[-b',b']\},$$
	$$C_2 =C_2(b') := \max\left\{\frac{\left|\beta^{(4)}(x)\right|}{32C_0^2}+2\frac{\left|\beta'(x)\right|}{C_0}+\frac{2\left|\beta'(x)\right|^2}{C_0}+2\beta'(x)^2|\beta''(x)|: x\in[-b',b']\right\}.$$
\end{proof}
The combination of Lemmas~\ref{lem::uniform Carleman Q bdy} and \ref{lem::uniform Carleman Q con} leads to the following.

\begin{lemma}\label{lem::intermediate uniform}	There exists positive constants $C_1:=C_1(b')$, $C_2:=C_2(b')$ and $C_0:=C_0(b')$ such that, for any $T>0$ and all
	$s\ge C_0(T+T^2)$, it holds
        \begin{multline}\label{eq::intermediate uniform}
	\int_{0}^{T}\int_{\operatorname{\omega}_{bdy}}2s\theta|\partial_x z|^2+2s\theta|z|^2+2s^3\theta^3|z|^2dQ\le\\
		\int_{0}^{T}\int_{\operatorname{\omega}_{con}}(C_1s\theta|\partial_x z|^2+C_2s^3\theta^3|z|^2)dQ
		+\frac{1}{2}\int_{Q}\left|e^{-\varphi}\cP_ng\right|^{2}dQ.
	\end{multline}
\end{lemma}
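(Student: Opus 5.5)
Lemma~\ref{lem::intermediate uniform} follows by chaining \eqref{eq61}, the decomposition of Lemma~\ref{lem::uniform Carleman scalar product}, and the two bounds of Lemmas~\ref{lem::uniform Carleman Q bdy} and~\ref{lem::uniform Carleman Q con}.

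\emph{Step 1: the identity.} Start from the $L^2(Q)$ identity \eqref{eq59}. Expanding $\|\cP_n^+z+\cP_n^-z\|^2$ and discarding the nonnegative terms $\|\cP_n^{\pm}z\|^2$ gives \eqref{eq61}, namely $\int_Q\cP_n^+z\,\cP_n^-z\,dQ\le\frac12\int_Q|e^{-\varphi}\cP_ng|^2dQ$. Substitute \eqref{eq::scalar product} for the left-hand side. The last term there is $s(2n^2-\tfrac12)\int\theta\beta'(x)\sin x\,\cos^{-3}x\,|z|^2$. Since $2n^2-\frac12>0$ for $n\ge1$, and $\beta'\sin x\ge0$ on $[-b',b']$ by \eqref{eq::key properties of beta in omega_con}, this term is nonnegative and may be dropped (Remark~\ref{rmk::key uniform}). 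This yields
\[
\int_0^T\!\!\int_{\omega_{bdy}}\operatorname{Q_{bdy}}\,dQ\le-\int_0^T\!\!\int_{-b'}^{b'}\operatorname{Q_{con}}\,dQ+\frac12\int_Q|e^{-\varphi}\cP_ng|^2dQ .
\]

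\emph{Step 2: the two sides.} Take $s\ge C_0(T+T^2)$ with $C_0=C_0(b')$ from Lemma~\ref{lem::uniform Carleman Q bdy}. That lemma bounds the left-hand side from below by $\int_0^T\int_{\omega_{bdy}}\bigl(2s\theta|\partial_xz|^2+2s\theta|z|^2+2s^3\theta^3|z|^2\bigr)dQ$. For the right-hand side, use $-\int\operatorname{Q_{con}}\le\int|\operatorname{Q_{con}}|$ and Lemma~\ref{lem::uniform Carleman Q con}, which is valid under the same lower bound on $s$. This gives the bound $\int_0^T\int_{\omega_{con}}(C_1s\theta|\partial_xz|^2+C_2s^3\theta^3|z|^2)\,dQ$. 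Combining the two bounds is exactly \eqref{eq::intermediate uniform}.

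\emph{Main obstacle: uniformity of the constants.} One must check that $C_0,C_1,C_2$ depend only on $b'$ (through $\beta$) and not on $n$ or $T$. The $n$-dependence of $\operatorname{Q_{con}}$ was isolated entirely in the signed term removed in Step 1, so $\operatorname{Q_{con}}$ is $n$-free. The $n$-dependent part of $\operatorname{Q_{bdy}}$ is $s\theta(2n^2-\frac12)\cos^{-2}x\,(1+2\operatorname{sign}(x)\tan x)$, and it is nonnegative on $\omega_{bdy}$ since $\operatorname{sign}(x)\tan x>0$ there. So the lower bound of Lemma~\ref{lem::uniform Carleman Q bdy} holds uniformly in $n$.

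A secondary point is that the integrations by parts behind Lemma~\ref{lem::uniform Carleman scalar product} must produce no boundary contributions at $x=\pm\pi/2$ or at $t=0,T$. At $x=\pm\pi/2$ this uses Lemma~\ref{lem::proprieties in D(M_n)}. At $t=0,T$ it uses that $e^{-s\theta\beta}$ vanishes to infinite order there, because $\beta\ge1$ by \eqref{eq::beta on -pi/2, pi/2}.
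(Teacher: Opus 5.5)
Your proposal is correct and follows the paper's own route. The paper obtains this lemma by combining \eqref{eq61} with the identity \eqref{eq::scalar product}, dropping the nonnegative $n$-dependent term via Remark~\ref{rmk::key uniform}, and then applying Lemmas~\ref{lem::uniform Carleman Q bdy} and~\ref{lem::uniform Carleman Q con}. Your observation that the $n$-dependent part of $\operatorname{Q_{bdy}}$ is nonnegative on $\omega_{bdy}$ usefully justifies the uniformity in $n$, which the paper leaves implicit.
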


At this stage, to complete the proof of Carleman estimates \eqref{eq::uniform Carleman estimate}, it suffices to consider Lemma~\ref{lem::intermediate uniform} and to follow point by point~\cite[Lemmas 4.15 and 4.16]{tamekue2022null}.\\

\section{A technical lemma}\label{s:technical lemmas}
This section is devoted to the proof of a technical result used in the proof of Proposition~\ref{pro::proposition MM}

\begin{lemma}\label{lem:A-over-power}
Let $n\in\N^{*}$ and $\ell\in\N^{*}$ with $\ell\ge n$. Define
\begin{equation}\label{eq:A-elln-lb}
 C_{\ell,n}:=\frac{(2\ell+1)(n+1)}{2^{2n+2}}\frac{(\ell-n)!(\ell+n)!}{((\ell+1)!)^2}.
\end{equation}
Then, for $m:=\ell-n\ge 0$, one has
\begin{equation}\label{eq:lower bound Cln}
    C_{n+m,n}\ge\frac{(2n+2m+1)(n+1)}{4(2n+1)(n+m+1)^2(n+m)^m}. 
\end{equation}
\end{lemma}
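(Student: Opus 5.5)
The plan is to strip the common factors from both sides of \eqref{eq:lower bound Cln} and reduce it to an elementary inequality between products. The reduction splits into a central-binomial part and a product over $j=1,\dots,m$, each of which I expect to close with a short factorwise argument.

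\emph{Reduction.} With $\ell=n+m$, definition \eqref{eq:A-elln-lb} reads
\[
C_{n+m,n}=\frac{(2n+2m+1)(n+1)}{2^{2n+2}}\,\frac{m!\,(2n+m)!}{\bigl((n+m+1)!\bigr)^2}.
\]
The factor $(2n+2m+1)(n+1)$ appears on both sides, and $2^{2n+2}=4\cdot 4^{n}$. Writing $(n+m+1)!=(n+m+1)\,(n+m)!$ also cancels $(n+m+1)^2$. So \eqref{eq:lower bound Cln} is equivalent to
\[
(2n+1)\,m!\,(2n+m)!\,(n+m)^m\;\ge\;4^{n}\,\bigl((n+m)!\bigr)^2 .
\]
Next I factor
\[
(2n+m)!=(2n)!\prod_{j=1}^m(2n+j),\qquad (n+m)!=n!\prod_{j=1}^m(n+j),\qquad m!=\prod_{j=1}^m j .
\]
With these, the inequality splits into two separate claims:
\[
\text{(A)}\quad (2n+1)\binom{2n}{n}\ge 4^n,
\qquad\qquad
\text{(B)}\quad \prod_{j=1}^m j\,(n+m)\,(2n+j)\;\ge\;\prod_{j=1}^m (n+j)^2 .
\]

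\emph{Step (A).} The binomial coefficients $\binom{2n}{k}$, $0\le k\le 2n$, are $2n+1$ numbers summing to $4^n$. Since $\binom{2n}{n}$ is the largest of them, $(2n+1)\binom{2n}{n}\ge 4^n$.

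\emph{Step (B).} For $m=0$ both products are empty and (B) holds with equality, so assume $m\ge1$. I compare factor by factor. For $1\le j\le m$, we have $n+m\ge n+j$, so it suffices to show
\[
j\,(2n+j)\ge n+j .
\]
This holds because $j(2n+j)-(n+j)=j(2n+j-1)-n\ge 2n-n\ge0$ for $j\ge1$. Hence $j(n+m)(2n+j)\ge (n+j)^2$ for each $j$. Taking the product over $j$ gives (B), and multiplying (A) and (B) yields \eqref{eq:lower bound Cln}.

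I do not expect a genuine obstacle. The only delicate point is the bookkeeping in the reduction: the powers of $2$ and the factors $(n+m+1)^2$ must cancel exactly. Once that is checked, the factorwise comparison in (B) is immediate.
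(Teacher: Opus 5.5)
Your proof is correct and takes essentially the paper's route: both arguments reduce the claim to the central binomial bound $(2n+1)\binom{2n}{n}\ge 4^n$ plus an elementary estimate of the $m$-dependent factorial ratio. Your (B) is exactly the paper's inequality $\binom{n+m}{m}\binom{2n}{n}\le (n+m)^m\binom{2n+m}{n}$. You check it factor by factor, whereas the paper uses $\binom{n+m}{m}\le (n+m)^m$ and $\binom{2n+m}{n}\ge\binom{2n}{n}$, and you prove (A) by the largest-binomial-coefficient argument rather than the Beta integral $B(n+1,n+1)$.
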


\begin{proof}
First, one has
\begin{equation}\label{eq:1Cln}
    \frac{1}{C_{n+m,n}} = \frac{2^{2n+2}}{(2n+2m+1)(n+1)}R_{m,n}, 
\end{equation}
where
\[
R_{m,n}:=\frac{((n+m+1)!)^2}{m!(2n+m)!} = (n+m+1)^2\frac{((n+m)!)^2}{m!(2n+m)!}
\]
Since 
\[
\frac{((n+m)!)^2}{m!(2n+m)!} = \frac{\binom{n+m}{n}}{\binom{2n+m}{n}}
\]
and
\[
\binom{2n+m}{n}\ge\binom{2n}{n},\qquad\binom{n+m}{n}=\binom{n+m}{m}\le(n+m)^m,
\]
we obtain
\[
\frac{((n+m)!)^2}{m!(2n+m)!} \le\frac{(n+m)^m}{\binom{2n}{n}}
\]
Using the classical lower bound for the central binomial coefficient (use the Beta function $B(n+1,n+1)$ to obtain the estimate)
\[
\displaystyle\binom{2n}{n}\ge \frac{2^{2n}}{2n+1}
\]
we get
\begin{equation}\label{eq:Rmn}
    R_{m,n}:=(n+m+1)^2\frac{((n+m)!)^2}{m!(2n+m)!}\le \frac{(2n+1)(n+m+1)^2(n+m)^m}{2^{2n}}
\end{equation}
Combining~\eqref{eq:Rmn} with~\eqref{eq:1Cln} completes the proof of the statement.
\end{proof}

\printbibliography

\end{document}